\documentclass[a4paper, 12pt]{amsart}
\setlength{\parskip}{6pt}

\usepackage{fullpage}
\usepackage{graphicx}
\usepackage{tikz}
\usepackage{amsmath,amsthm, amsfonts, amssymb}
\usepackage{mathrsfs}
\usepackage{enumitem}
\usepackage[english]{babel}
\usepackage [autostyle, english = american]{csquotes}
\MakeOuterQuote{"}
\usepackage{tikz-cd}
\usetikzlibrary{cd}
\usepackage{cite}
\usepackage[hidelinks]{hyperref}


\newcommand{\integers}{\mathbb{Z}}

\newcommand{\complex}{\mathbb{C}}

\newcommand{\vertiii}[1]{{\left\vert\kern-0.25ex\left\vert\kern-0.25ex\left\vert #1 
    \right\vert\kern-0.25ex\right\vert\kern-0.25ex\right\vert}}




\newcommand{\B}{\mathcal{B}}
\newcommand{\C}{\mathcal{C}}
\newcommand{\D}{\mathcal{D}}
\newcommand{\E}{\mathcal{E}}

\newcommand{\J}{\mathcal{J}}
\newcommand{\K}{\mathcal{K}}

\newcommand{\U}{\mathcal{U}}

\newcommand{\X}{\mathcal{X}}

\newcommand{\Z}{\mathcal{Z}}

\newtheorem{thrm}{Theorem}[section]
\newtheorem{lemma}[thrm]{Lemma}
\newtheorem{prop}[thrm]{Proposition}
\newtheorem{cor}[thrm]{Corollary}

\theoremstyle{definition}
\newtheorem{defn}[thrm]{Definition}

\newtheoremstyle{named}{}{}{\itshape}{}{\bfseries}{.}{.5em}{\thmnote{#3's }#1}
\theoremstyle{named}

\begin{document}
\title {Some classes of smooth bimodules over II$_1$ factors  \\ 
and their associated 1-cohomology spaces}
\author{Patrick Hiatt,  Jesse Peterson, Sorin Popa}

\address{Department of Mathematics\\Vanderbilt University, 1326 Stevenson Center, Nashville, TN 37240}
\email{jesse.d.peterson@vanderbilt.edu}
\address{Department of Mathematics, UCLA, Los Angeles CA 90095-1555, USA}
\email{pjhiatt@math.ucla.edu} 
\email{popa@math.ucla.edu} 
\thanks{JP was supported in part by NSF Grant DMS \#1801125 and NSF FRG Grant \#1853989}
\thanks{SP Supported by NSF Grant DMS-1955812  and the Takesaki Endowed Chair at UCLA}
\maketitle

\begin{abstract}
We study several classes of Banach bimodules over a II$_1$ factor $M$, endowed with topologies 
that make them ``smooth'' with respect to $L^p$-norms implemented by the trace $\tau$ on $M$. Thus, letting $M\subset \B= \B(L^2M)$, 
and $2\leq p < \infty$, we consider: $(1)$ the space $\B(p)$, obtained as the completion of $\B$ in the norm $|||T|||_p := \sup \{|\varphi(T)| \mid 
\varphi \in \B^*, \sup\{|\varphi(xYz)| \mid Y\in (\B)_1, x, z \in M\cap (L^pM)_1\} \leq 1 \}$; 
$(2)$ the subspace $\K(p)\subset \B(p)$, obtained as the closure in $\B(p)$ of the space of compact operators $\K(L^2M)$; 
$(3)$ the space $\K_p\subset \B$ of operators that are $||| \ |||_p$-limits of bounded sequences 
of operators in $\K(L^2M)$. We prove that $\K_p$ are  all equal to 
the {\it $\tau$-rank-completion} of $\K(L^2M)$ in $\B$, defined by $\text{\rm q}\K_M:= \{K\in  \B(L^2M) \mid \exists 
K_n \in \K(L^2M), p_n\in \mathcal P(M), \lim_n \|p_n(K-K_n)p_n\|= 0, \lim_n\tau(1-p_n)=0\}$.  
We show that any separable II$_1$ factor $M$ admits non-inner derivations into $\text{\rm q}\K_M$, but that any derivation $\delta:M \rightarrow \text{\rm q}\K_M$ is 
a pointwise limit in $\tau$-rank-topology of  inner derivations. 
\end{abstract}

\section{Introduction}

While Hochschild introduced his cohomology theory for algebras in the mid 1940s, it was around 1970 that 
this theory started to be adapted and systematically 
studied in operator algebras framework  (see the series of papers by Johnson, Kadison and  Ringrose \cite{J72}, \cite{KR71}, \cite{JKR72}). However,  problems related 
to derivations of an operator algebra $M$ with values in special $M$-bimodules, such as $M$ itself, which amounts to the 1-cohomology group of $M$ with coefficients in $M$, 
started to be investigated several years earlier, triggered by Kaplanski's interest in such problems (see e.g. \cite{Ka53} or the footnote in \cite{SW55}). 
In a pioneering result in this direction,  it is shown in \cite{SW55}) that 
any derivation of a commutative Banach algebra must be equal to zero, while in  (\cite{K66}, \cite{Sa66})  is it shown that all derivations of a von Neumann algebra are inner. 
More general $M$-bimodules $\mathcal B$ were soon considered, such as algebras $\B$ that contain $M$, 
notably $M\subset \B=\mathcal B(\mathcal H)$ (see e.g. \cite{Ch80}), or classical ideals in $\mathcal B(\mathcal H)$, 
like the  Schatten-von Neumann $p$-class $c_p(\mathcal H)$, $1\leq p < \infty$ (\cite{JP72}, \cite{J72}, \cite{Ho77}), or the ideal of compact operators (\cite{JP72}). 

Most of the early results in this direction aimed at proving that 
all derivations of an algebra $M$ into an $M$-bimodule $\mathcal B$ are inner, and more generally on showing that all cohomology groups of $M$ with coefficients in $\mathcal B$ vanish, 
H$^n(M, \mathcal B)=0$, $\forall n$. 
But  starting with the work of Johnson in (\cite{J72}, \cite{J74}), an interest towards using the cohomology groups 
H$^n(M, \mathcal B)$ as effective invariants for a von Neumann algebra $M$ has emerged. 
However, while the  amenable-nonamenable  dichotomy could soon be established this way, by showing that a tracial 
von Neumann algebra $M$ is amenable if and only if $H^1(M, \mathcal B)=0$ for any normal dual 
Banach $M$-bimodule $\mathcal B$ (cf \cite{J72}, \cite{C75}, \cite{C76}, \cite{E88}),  by early 1980s all efforts in this direction have stalled. At   
the  ``Operator Algebra Summer School''  in Kingston 1980, where the main directions of research in this area were presented, 
 two cohomology problems were particularly emphasized: $(1)$ whether H$^n(M,M)=0$, $\forall n$, for any II$_1$ factor $M$; 
$(2)$ whether any derivation of a II$_1$ factor $M$ into 
$\mathcal B(\mathcal H)$ is inner when $M\subset \mathcal B(\mathcal H)$ has infinite coupling constant (the case when dim$_M\mathcal H<\infty$ had been settled in the affirmative 
in \cite{Ch80}).  

These problems are still open, but there has been progress on both. On the one hand, problem $(2)$ was shown to be equivalent to the similarity problem, asking whether 
any bounded representation of any C$^*$-algebra $A$, $\pi: A \rightarrow \mathcal B(\mathcal H)$, is similar to a $^*$-representation (i.e., $\exists S\in \mathcal B(\mathcal H)$ invertible 
such that $A \ni x \mapsto S^{-1}\pi(x)S$ is a $^*$-representa- tion), see \cite{Pi01} for several equivalent formulations and a deep analysis of this problem. 
On the other hand, it was shown that H$^n(M, M)=0$, $\forall n$, for many classes of II$_1$ factors with ``good decomposability'' features, such as the property Gamma 
of Murray and von Neumann, existence of Cartan subalgebras, and more generally existence of a ``thin decomposition'' of $M$ with respect to a pair of amenable subalgebras 
(see  \cite{ChPSS03}). But the perception on these problems has changed: one now expects 
that there do exist II$_1$ factors $M$ for which $H^2(M,M)\neq 0$ and $H^1(M, \mathcal B(L^2M \otimes \ell^2\mathbb N) )\neq 0$, 
and that in fact this should be the case for the free group factors $M=L\mathbb F_n , 2\leq n \leq \infty$.  However,  these  
cohomology spaces are expected to be difficult to calculate, and to not be able to make ``fine distinctions'', such as to differentiate  between the free group factors $L\mathbb F_n, 2\leq n \leq \infty$, or show that $L\mathbb F_\infty$ cannot be finitely generated.

A  big impetus towards finding a different cohomology theory for II$_1$ factors, one that would be non-vanishing and calculable,  
providing an efficient invariant that would reflect fine structural properties  of the algebras involved, came in 2001, triggered by Gaboriau's 
successful generalization to orbit equivalence relations $\mathcal R_\Gamma$ arising from 
actions of countable groups by measure preserving transformations $\Gamma \curvearrowright X$ of  Atyiah's and Cheeger-Gromov notion of $L^2$-cohomology of groups, 
leading to his notion of $L^2$-Betti numbers for $\mathcal R_\Gamma$ satisfying $\beta_n^{(2)}(\mathcal R_\Gamma)=\beta^{(2)}_n(\Gamma)$, with the  
striking consequence that free groups of different rank cannot be orbit equivalent (\cite{G01}). 
Since Gaboriau's $L^2$-cohomology for $\mathcal R_\Gamma$ can be viewed as a cohomology theory for the corresponding 
Cartan inclusion $A=L^\infty(X) \subset L(\mathcal R_\Gamma)=M$, of the  group measure space II$_1$ factor associated with the orbit equivalence relation $\mathcal R_\Gamma$, 
it is an invariant for factors $M$ with unique Cartan decomposition (see \cite{P01}), 
for which one can simply define associated $L^2$-Betti numbers as 
$\beta^{(2)}_n(M)=\beta^{(2)}_n(\mathcal R_\Gamma)$.  

But a more interesting ``wishful'' $L^2$-cohomology theory along these lines would be for group factors $M=L\Gamma$  
arising from ICC  groups $\Gamma$, typically without Cartan subalgebras, for which one would like to have an identification between the $L^2$-cohomology of $L\Gamma$ 
and the $L^2$-cohomology of the group $\Gamma$, with the corresponding  
$L^2$-Betti number $\beta^{(2)}_n(L\Gamma)$ coinciding with Atyiah's $L^2$-Betti number of the group, $\beta^{(2)}_n(\Gamma)$. 
This problem was much emphasized by Connes in his talk at MSRI in the Spring of 2001 (\cite{C01}). 

Several attempts were made in this direction: $(a)$   
Connes-Shlyakhtenko proposed in \cite{CS05} an ``everywhere defined'' cohomology of $M$ with coefficients 
in the Murray-von Neumann algebra Aff$(M\otimes M^{op})$ of operators affiliated with $M\otimes M^{op}$; 
$(b)$ Peterson considered in \cite{Pe09} a ``densely defined'' $L^2$-cohomology  theory for II$_1$ factors; $(c)$ Galatan-Popa considered in \cite{GP14} 
a generalized  version of the 1-cohomology with coefficients in $\mathcal K(L^2M)$ in (\cite{JP72}, \cite{P85}),  based on the larger class 
of {\it smooth bimodules}, trying this way to avoid being always equal to $0$, while still vanishing in ``amenable directions''.  

All these attempts have shortcomings: 
\cite{Pe09} encountered the difficulty of having to prove the independence of the cohomology 
on the dense domain of the derivations; \cite{CS05} had to be 
adjusted with some continuity conditions in \cite{Th08}, and that modified version was shown in \cite{PV15}  to always be equal to $0$ 
(this was previously shown in \cite{AK15} and \cite{A14} to hold in certain cases, such as for free group factors); 
of the two  classes of smooth bimodules  proposed in \cite{GP14}, one was shown to produce a cohomology that's always $0$ and the other one 
has not led so far to non-vanishing examples.

Our work in this paper represents a new effort towards identifying a  class of $M$-bimodules  $\mathcal B$ that would allow defining  
a viable cohomology theory, an effective isomorphism invariant, for the II$_1$ factors $M$. To begin with, since our approach is somewhat inspired 
by the $L^2$-cohomology of groups, one expects $\mathcal B$ to depend canonically on $M$ and be related in some ways to the Hilbert space $L^2M$ 
and the space of linear bounded operators acting on it $\mathcal B(L^2M)$. 

Beyond that, a first priority for us was  that the $1$-cohomology with coefficients in $\mathcal B$ should not always vanish, i.e, that there should exist II$_1$ factors 
$M$ that admit non-inner derivations into $\mathcal B$, especially in the case $M=L\Gamma$ with 
$\beta^{(2)}_1(\Gamma)\neq 0$, like $\Gamma=\mathbb F_n$, $2 \leq n \leq \infty$. Another consideration was that $\mathcal B$ should host 
the derivations $\delta_c: L\Gamma \rightarrow \mathcal B$ coming from $1$-cocycles $c:\Gamma \rightarrow \ell^2\Gamma$, which on the 
group algebra $\mathbb C\Gamma = \text{\rm span}\{u_g\}_g$ are of the form $\delta_c(u_g) =[T_f, u_g]$, where $T_f$ is the diagonal operator 
implemented by $f\in \ell^\infty\Gamma$, obtained by ``integrating'' $c$ over the Cayley graph of $\Gamma$ (this is given by the formula  
$f(g)=-c_g(g)$, $\forall g\in \Gamma$). This implicitly means that 
derivations of $M$ into $\mathcal B$ should be uniquely determined by their values on weakly dense $^*$-subalgebras. 
At the same time, one would like $\B$ to have an $M^{op}$-bimodule structure as well, 
commuting with its $M$-bimodule structure, potentially leading to a right $M\overline{\otimes} M^{op}$-module structure on $\mathcal B$.  
One would further hope that whenever $(u_k)^n_{k=1}\subset \mathcal U(M)$ is a finite set 
of unitaries generating $M$ as a von Neumann algebra, the map $\delta \mapsto (\delta(u_k))_k$ 
gives an injective right-$M\overline{\otimes} M^{op}$-modular map from the space of derivations Z$^1(M, \mathcal B)$ into $\mathcal B^n$, 
that would behave well to the quotient by the space of inner derivations B$^1(M, \mathcal B)$, or by its closure  $\overline{\text{\rm B}^1}$ under a suitable topology. 
If such requirements are met,  this would allow associating a first $L^2$-Betti number for $M$, $\beta^{(2)}_1(M)$,  
as the Murray-von Neumann-Lueck dimension of $\tilde{\rm{\text{H}}}^1(M, \mathcal B):=\text{\rm Z}^1(M, \mathcal B)/\overline{\text{\rm B}^1}$ 
viewed as a right $M\overline{\otimes} M^{op}$-module. 

These considerations force  $\mathcal B$ to be somewhat related to $\mathcal K(L^2M)$, the space where $[M_f,u_g]$ takes values. 
So having all this in mind, we consider here the following  spaces.    

For each $p\geq 2$, we consider the  Banach space of ``compact-like operators'' $\mathcal K(p)$ defined as follows. 
We first let $\B^*(p)$ be the space of functionals  
$\varphi$ on $\mathcal B(L^2M)$ with the property that \[
\|\varphi\|_{\B^*(p)}:= \sup\{ |\varphi(xYz)| \mid  Y\in (\B)_1, 
x, y \in  M \cap (L^pM)_1\}<\infty.
\]
We then let $\B(p)$ denote the completion of $\B (L^2M)$ in the norm 
\[
\vertiii{T}_p := \sup \{|\varphi(T)| \mid 
\varphi \in (\B^*(p))_1 \}.
\]
Finally, we denote by $\K(p)\subset \B(p)$ the closure in $\B(p)$ of the space of compact operators $\K(L^2M)$. 
It is immediate to see that $(\K(p), \vertiii{ \, \cdot \, }_p)$ is  both a Banach $M$-bimodule and a Banach $M^{op}$-bimodule. It is also easy to see 
that  for each $X\in \K(p)$ the left-right multiplications by elements in the unit ball of $M$ is $\| \cdot \|_2-\vertiii{ \, \cdot \, }_p$ continuous (smoothness). 

Since the derivations of $M=L\Gamma$ arising from cocycles $c: \Gamma \rightarrow \ell^2\Gamma$ are often implemented by bounded operators 
$M_f\in \B(L^2M)$, we in fact expect that the $M$-bimodules of interest for us consist of bounded operators. We thus also consider the 
spaces $\K_p:=\K(p)\cap \B(L^2M)$, $p\geq 2$. We prove that in fact all $\K_p$, $2\leq p <\infty$, ``collapse'' to just one space, which we show to coincide with the closure in $\B(L^2M)$  of $\K(L^2M)$ in 
the so-called {\it $\tau$-rank metric} $\text{\rm q}_M$, given by its $M$-bimodule structure, $\text{\rm q}_M(S, T)=\inf\{\tau(1-p) + \|p(T-S)p\| \mid p \in \mathcal P(M)\}$.

\begin{thrm}\label{thrm:1.1} For each $p\geq 2$ denote by  $\K_p$ the space of 
operators $T\in \B(L^2M)$ for which there exists a sequence  $K_n \in \K(L^2M)$ such that 
$\sup_n \|K_n\|<\infty$ and $\lim_n \vertiii{T-K_n}_p=0$. Then $\K_p$  coincides with the $\tau$-rank-completion $\text{\rm q}\K_M$ of $\K(L^2M)$ in $\B(L^2M)$.  
\end{thrm}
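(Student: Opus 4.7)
The plan is to prove $\K_p = \text{\rm q}\K_M$ via the two separate inclusions.

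\emph{The forward inclusion $\text{\rm q}\K_M \subseteq \K_p$ is a direct construction.} Given $T \in \text{\rm q}\K_M$ with data $K_n \in \K(L^2M)$ and $p_n \in \mathcal{P}(M)$ satisfying $\tau(1-p_n)\to 0$ and $\|p_n(T-K_n)p_n\|\to 0$, I set $\tilde K_n := p_nK_np_n$. Each is compact, and $\|\tilde K_n\| \leq \|p_nTp_n\| + \|p_n(K_n-T)p_n\| \leq \|T\| + o(1)$ gives $\sup_n\|\tilde K_n\|<\infty$. Writing
\[
T - \tilde K_n \;=\; (1-p_n)T \;+\; p_n T(1-p_n) \;+\; p_n(T-K_n)p_n,
\]
the one-sided module estimate $\vertiii{xY}_p \leq \|x\|_p\|Y\|_\infty$ --- obtained by testing any $\varphi \in (\B^*(p))_1$ on $xY\cdot 1$ and using $\|1\|_p = 1$ --- controls the first two terms by $\tau(1-p_n)^{1/p}\|T\|\to 0$, while the elementary bound $\vertiii{\cdot}_p \leq \|\cdot\|_\infty$ handles the third; hence $\vertiii{T-\tilde K_n}_p\to 0$ and $T\in\K_p$.

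\emph{The reverse inclusion $\K_p\subseteq\text{\rm q}\K_M$ reduces, by diagonal extraction, to a Key Lemma:} for every $\epsilon,C>0$ there exists $\delta>0$ such that, for any $S\in\B(L^2M)$ with $\|S\|\leq C$ and $\vertiii{S}_p\leq\delta$, one can find $q\in\mathcal{P}(M)$ with $\tau(1-q)<\epsilon$ and $\|qSq\|<\epsilon$. Granting the lemma, for each $m$ I choose $n(m)$ with $\vertiii{T-K_{n(m)}}_p < \delta(1/m,\sup_n\|T-K_n\|)$ and apply the lemma to $S := T-K_{n(m)}$, yielding $q_m\in\mathcal{P}(M)$ with $\tau(1-q_m)<1/m$ and $\|q_m(T-K_{n(m)})q_m\|<1/m$; the compact operators $L_m := K_{n(m)}$ together with $(q_m)$ witness $T\in\text{\rm q}\K_M$.

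\emph{Proving the Key Lemma is the main obstacle.} The starting estimate is the off-diagonal bound: for any $a,b\in(L^{p'}M)_1$ with $p'=2p/(p-2)$, the functional $\varphi_{a,b}(Y):=\langle Y\hat a,\hat b\rangle$ lies in $(\B^*(p))_1$ by H\"older, since
\[
|\langle xYz\hat a,\hat b\rangle| \leq \|Y\|\|za\|_2\|x^*b\|_2 \leq \|Y\|\|x\|_p\|z\|_p\|a\|_{p'}\|b\|_{p'}.
\]
Consequently $\|S\hat a\|_{L^{p''}M} \leq \vertiii{S}_p\|a\|_{p'}$, where $p''=2p/(p+2)$ is the $\tau$-dual of $p'$. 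Combined with the trivial $L^2\to L^2$ bound $\|S\|_\infty\leq C$, one interpolates to control the ``local energy'' $\|S\hat a\|_2$ of $S$ over small corners and then performs a Chebyshev-type maximality construction inside $M$ --- choosing $q$ to be maximal among projections in $M$ for which a suitable $M$-valued surrogate of $E_M(S^*S)$ is dominated by $\epsilon^2$ --- to extract the desired compression. The conceptual difficulty is that $\vertiii{\cdot}_p$ is a two-sided dual norm measuring how $S$ couples to $L^{p'}$-elements of $M$, while $\|qSq\|_\infty$ is the raw operator norm after spatial compression; since the spectral projections of $|S|$ live in $\B$ rather than in $M$, one must use $M$-valued substitutes (e.g.\ Popa-style quadratic forms $a\mapsto\|S\hat a\|_2^2$ on $M$) to construct $q$, and careful tracking of the exponents $2/p$ and $2/p'$ is needed to obtain the correct dependence $\delta=\delta(\epsilon,C)$.
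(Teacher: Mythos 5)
Your forward inclusion $\text{\rm q}\K_M\subseteq\K_p$ is correct and is essentially the paper's own argument (corner decomposition through a projection plus the estimate $\vertiii{xY}_p\leq\|x\|_p\|Y\|$ from Lemma~\ref{lemma:4.2}; replacing $K_n$ by $p_nK_np_n$ to force boundedness is a harmless refinement). The gap is in the reverse inclusion: your ``Key Lemma'' is precisely the crux of the theorem, and you do not prove it --- and, more seriously, the strategy you sketch for it cannot be completed as described. The only quantitative consequences of $\vertiii{S}_p\leq\delta$ that your sketch feeds into the construction are the bound $\|S\hat a\|_{q'}\leq\vertiii{S}_p\|a\|_{q}$ (i.e.\ smallness of the induced operator $\tilde S\in\B(L^qM,L^{q'}M)$, with $q=\frac{2p}{p-2}$, $q'=\frac{2p}{p+2}$) together with $\|S\|\leq C$. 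These two pieces of information are strictly weaker than smallness of $\vertiii{S}_p$ and do \emph{not} imply the conclusion: in a II$_1$ factor take $S=f^{op}=JfJ$ with $f\in\mathcal P(M)$ of small trace; then $\|S\|=1$ and $\|\tilde S\|_{L^qM\to L^{q'}M}=\|f\|_{p/2}=\tau(f)^{2/p}$ is as small as you wish, yet $\|qSq\|=1$ for \emph{every} nonzero $q\in\mathcal P(M)$ (choose a nonzero partial isometry $v\in qMf$ and test $qSq$ on $\hat v/\|v\|_2$). So no interpolation between the $L^q\to L^{q'}$ bound and the $L^2\to L^2$ bound, and no Chebyshev/maximality construction driven only by those bounds, can produce the desired projection. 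This is exactly the obstruction the paper records in the corollary following Proposition~\ref{prop:4.4}: the map $T\mapsto\tilde T$ is not a homeomorphism, so passing to $\tilde S$ loses the essential strength of $\vertiii{\cdot}_p$ (note $\vertiii{f^{op}}_p=1$ by Proposition~\ref{prop:4.4}, so your Key Lemma itself is not contradicted --- only your route to it).

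What is actually needed, and what the paper supplies, is the factorization result (Lemma~\ref{lemma:4.6}): $\vertiii{T}_p=\inf\{\|a\|_p\|S\|\|b\|_p \mid a,b\in M,\ S\in\B(L^2M),\ aSb=T\}$, proved by a Hahn--Banach separation argument on the convex sets of operators admitting factorizations of controlled size. Granting this, your Key Lemma and the reverse inclusion follow by the Chebyshev argument you have in mind, but performed inside $M$ on the factors: write $T-K_n=a_nS_nb_n$ with $\|a_n\|_p\|S_n\|\|b_n\|_p\to0$, cut by spectral projections of $a_na_n^*$ and $b_n^*b_n$ to get $p_n\in\mathcal P(M)$ with $\tau(1-p_n)\to0$ and $\|p_n(T-K_n)p_n\|\leq\|p_na_n\|\,\|S_n\|\,\|b_np_n\|\to0$, which is the paper's proof of Theorem~\ref{thrm:6.5}. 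In short: your reduction and bookkeeping are fine, but the one step you defer is the theorem's real content, and it requires the factorization lemma (or an equivalent use of the full dual norm $\vertiii{\cdot}_p$), not the $L^q\to L^{q'}$ compression estimates you propose.
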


We note that the $\tau$-rank-completion $\text{\rm q}\K_M$ also coincides with the strong $M$-$M$-comple- tion of $\K(L^2M)$ in the sense of \cite{Ma00}, although we do not take this perspective here.

Any derivation of $M$ into $\text{\rm q}\K_M$ is indeed determined by its values on any weakly-dense $^*$-subalgebra of $M$. In fact, any derivation 
of $M$ into $\text{\rm q}\K_M$ is continuous from the unit ball of $M$ with the $\| \cdot \|_2$-topology to $\text{\rm q}\K_M$ with its $\text{\rm q}_M$-metric. 
Also, $\text{\rm q}\K_M$   
is both a Banach $M$ and $M^{op}$-bimodule and all derivations arising from non-vanishing 1-cocyles $c$ of $\Gamma$ into $\ell^2\Gamma$ described above 
give rise to non-inner derivations of $M=L\Gamma$ into $\text{\rm q}\K_M$. But in fact  any separable II$_1$ factor $M$ (so including the hyperfinite II$_1$ factor) 
admits non-inner derivations into the $M$-bimodule $\text{\rm q}\K_M$:

\begin{thrm}\label{thrm:1.2}
Given any separable diffuse tracial von Neumann algebra $M$, there exist non-inner derivations of $M$ into $\text{\rm q}\K_M$.
\end{thrm}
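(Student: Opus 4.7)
The plan is to realize $\delta$ as a pointwise $\mathrm{q}_M$-limit of inner derivations $[T_n,\,\cdot\,]$ with $T_n \in M$, chosen so that the $T_n$ fail to converge in $\mathrm{q}\K_M$ while every commutator sequence $([T_n, x])_n$ with $x\in M$ does converge. Non-innerness of the limit $\delta$ will then follow almost by design: if $\delta = [T,\,\cdot\,]$ for some $T\in\mathrm{q}\K_M$, then the sequence $T_n - T$ would $\mathrm{q}_M$-converge to an element of the commutant $M'\cap \B(L^2M)$, so $T_n$ itself would be $\mathrm{q}_M$-Cauchy modulo a bounded correction, which contradicts how the $T_n$ are chosen.

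First, I would exploit that $M$ is separable and diffuse to set up the data: fix a diffuse abelian subalgebra $A\subset M$ with $A\cong L^\infty([0,1])$, a Haar unitary $u\in A$, a decreasing sequence of projections $q_n\in A$ with $\tau(q_n)=2^{-n}$ and $\bigcap_n q_n = 0$, and a countable $\|\cdot\|$-dense $\ast$-subalgebra $M_0\subset M$ organized as an ascending union of finitely generated pieces. Separability is what makes it possible to later diagonalize against all of $M_0$ simultaneously.

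Second, I would construct the sequence explicitly, say $T_n := \sum_{k=1}^n c_k(q_k - q_{k+1}) \in A\subset M$, with weights $c_k\to\infty$ to be calibrated. Each $T_n$ lies in $M$, but $\|T_n\|=c_n\to\infty$, so the $T_n$ cannot converge in any $\mathrm{q}_M$-bounded way and, modulo the step-three argument, cannot converge in $\mathrm{q}\K_M$ either. The central estimate is that for each fixed $x\in M_0$ the differences $[T_{n+1},x]-[T_n,x] = c_{n+1}[q_{n+1}-q_{n+2},x]$ are supported (on both sides) by a projection of trace $\le 2^{-n}$, so they are small in $\mathrm{q}_M$ provided the coefficients $c_k$ are tuned against the off-diagonal behavior of elements of $M_0$ with respect to the spectral shells of $u$. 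Once this Cauchy property holds, Theorem~\ref{thrm:1.1} gives $\delta(x) := \mathrm{q}_M\text{-}\lim_n [T_n,x]\in \mathrm{q}\K_M$ for each $x\in M_0$, the Leibniz rule passes to the limit, and $\delta$ extends uniquely to $M$ by continuity from the unit ball of $M$ (as mentioned in the paragraph following Theorem~\ref{thrm:1.1}).

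The main obstacle is the calibration in step two: the $c_k$'s must grow fast enough that no element of $\mathrm{q}\K_M$ can absorb the divergence of $T_n$ (so that step three's non-innerness argument actually bites), yet slow enough that the commutators $[T_n,x]$ stay $\mathrm{q}_M$-Cauchy uniformly over bounded $x\in M_0$. This delicate balance, together with the observation that any candidate implementing operator $T\in\mathrm{q}\K_M$ would reduce to something commuting with $M$ inside $\mathrm{q}\K_M$ (and hence, by diffuseness of $M$ and the $\tau$-rank-approximation description of $\mathrm{q}\K_M$ from Theorem~\ref{thrm:1.1}, is severely constrained), is what I expect to carry the argument. Separability is precisely what allows the coefficients $c_k$ to be chosen diagonally against a dense countable family of test elements in $M$.
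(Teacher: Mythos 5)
Your construction cannot work as stated, for a structural reason: you take $T_n\in A\subset M$, so every commutator $[T_n,x]$ lies in $M$. A $\text{\rm q}_M$-limit in the paper's sense requires $\sup_n\|[T_n,x]\|<\infty$, and a uniformly bounded, $\text{\rm q}_M$-convergent sequence from $M$ also converges in the weak operator topology, so its limit again lies in $M$. But $M\cap \text{\rm q}\K_M=\{0\}$ when $M$ is diffuse: if $y\in M\cap\text{\rm q}\K_M$ and $w_n\in\mathcal U(M)$ tend weakly to $0$, then $\langle y\hat w_n,\hat w_n\rangle\to 0$ (approximate $y$ by compacts under projections of trace close to $1$), while traciality gives $\langle y\hat w_n,\hat w_n\rangle=\tau(y)$ for every $n$; hence $\tau(y)=0$, and replacing $y$ by $yu$ (the bimodule property) gives $\tau(yu)=0$ for all unitaries $u$, so $y=0$. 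Thus any derivation obtained as a bounded pointwise $\text{\rm q}_M$-limit of your inner derivations that lands in $\text{\rm q}\K_M$ is identically zero — there is nothing non-inner to extract. The only escape would be to drop uniform boundedness, but then "$\text{\rm q}_M$-convergence" is not defined (the metric is only meaningful on bounded sets), and Theorem~\ref{thrm:1.1} is an identification $\K_p=\text{\rm q}\K_M$, not a completeness statement; completeness (Proposition~\ref{prop:6.2}) holds only on norm-balls. Indeed your own sequence $T_n=\sum_{k\le n}c_k(q_k-q_{k+1})$ satisfies $\text{\rm q}_M(T_{n+m},T_n)\le 2^{-n-1}$ for every choice of $c_k$, yet it "converges" only to an unbounded operator affiliated with $A$ — so the Cauchy estimate you rely on carries no information about existence of a bounded limit, and no calibration of the $c_k$ can fix this.

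There is also a second gap: even granting a limit derivation, your non-innerness argument does not follow. Pointwise $\text{\rm q}_M$-convergence of $[T_n,\cdot\,]$ to $[T,\cdot\,]$ gives no convergence of $T_n-T$ to anything, so the claimed contradiction with the commutant never materializes; and Theorem~\ref{thrm:1.3} shows that \emph{every} derivation of this kind is a pointwise $\text{\rm q}_M$-limit of inner derivations implemented by finite-rank operators, so being such a limit is no evidence of non-innerness. The paper's proof necessarily works with implementing operators outside $M\cup M'$: it takes a quasicentral approximate unit $K_n\in\K(L^2M)$ (Arveson), conjugates by almost-orthogonal unitaries $Ju_{n_{2i}}J\in M^{\rm op}$ so that $T=\sum_i Ju_{n_{2i}}JK_{2i}Ju_{n_{2i}}^*J$ is bounded with $[T,x]$ compact for all $x$ in a weakly dense sequence, and then detects non-innerness by showing that $\langle u_{n_k}Tu_{n_k}^*\hat 1,\hat 1\rangle$ oscillates (bounded below by $1/3$ along even indices, above by $1/6$ along odd ones), whereas it would have to converge if $\operatorname{ad}T=\operatorname{ad}S$ with $S\in\text{\rm q}\K_M$. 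If you want to salvage your $A$-based intuition, the analogue is Proposition~\ref{prop:7.4}: the implementing operator must be a diagonal-type operator such as $T_f$ with $f\in\ell^\infty\Gamma\setminus(\mathbb C+c_0(\Gamma))$, not an element of $M$ itself.
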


Thus, while our primary objective of getting a non-vanishing 1-cohomology is indeed being met by the $M$-bimodules $\text{\rm q}\K_M$, the above result 
 shows that the associated  (classic Hochschild) 1-cohomology space H$^1(M, \text{\rm q}\K_M)$, obtained as the 
 quotient of the space of derivations Z$^1(M, \text{\rm q}\K_M)$ by the space of inner derivations $\text{\rm B}^1(M, \text{\rm q}\K_M)$, 
 becomes too ``wild'', certainly  un-calculable. This means one has to take instead 
the quotient of Z$^1(M, \text{\rm q}\K_M)$ by a closure $\overline{\text{\rm B}^1(M, \text{\rm q}\K_M)}$ 
 with respect to some suitable topology on the space of derivations, like one does for the $L^2$-cohomology of groups. 
This should however take into consideration that the  closure of B$^1$ in the $\| \quad \|_2-\text{\rm q}_M$ pointwise convergence on the unit ball of $M$  is too weak for this purpose, as one has the following:

\begin{thrm}\label{thrm:1.3}
 Let  $\delta: M \to \text{\rm q}\K_M$ be a derivation implemented by $T \in \B(L^2 M)$. Then there exists a net of finite-rank operators $K_\iota$ with $\| K_\iota \| \leq \|T \|$ such that 
 $$
 \lim_\iota \text{ \rm q}_M(\delta(x) , [K_\iota , x] )  = 0, \forall x \in M.
 $$ 
 Moreover, if $L^2M$ is separable, then the net can be taken a sequence. 
\end{thrm}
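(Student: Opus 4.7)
The plan is to define $K_\iota := e_\iota T e_\iota$, where $\{e_\iota\}$ is a net of finite-rank projections on $L^2M$ with $e_\iota \uparrow 1$ in the strong operator topology; in the separable case one takes a sequence coming from projections onto initial segments of a fixed orthonormal basis of $L^2M$. The norm bound $\|K_\iota\| \leq \|T\|$ is automatic, so it suffices to establish $\text{\rm q}_M([K_\iota, x], [T, x]) \to 0$ for each $x \in M$.

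A direct Leibniz-rule computation yields
$$[K_\iota, x] - [T, x] = -(1-e_\iota)[T,x] - e_\iota[T,x](1-e_\iota) + [e_\iota, x]\,T\,e_\iota + e_\iota\,T\,[e_\iota, x].$$
The first two terms involve $\delta(x) = [T,x] \in \text{\rm q}\K_M$: given $\varepsilon > 0$, choose $p \in \mathcal P(M)$ with $\tau(1-p) < \varepsilon$ and a compact $C \in \K(L^2M)$ with $\|p([T,x]-C)p\| < \varepsilon$. Compactness of $C$ gives $\|(1-e_\iota)C\|, \|C(1-e_\iota)\| \to 0$ in operator norm as $e_\iota \uparrow 1$ strongly, while the residual $[T,x]-C$, controlled only in compressed norm, is absorbed after arranging that the $e_\iota$ commute with $p$, so that cross-terms like $p(1-e_\iota)(1-p)$ vanish and only the inner compression $p(1-e_\iota)p$ survives.

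The main obstacle is the pair $[e_\iota, x]\,T\,e_\iota$ and $e_\iota\,T\,[e_\iota, x]$. Although these are finite-rank, finite-rank operators in $\B(L^2M)$ are generically not $\text{\rm q}_M$-small: for $\xi, \eta \in L^2M$ of unit norm not concentrated in a small $M$-corner, $\text{\rm q}_M(|\xi\rangle\langle\eta|, 0)$ is of order $1$. The strategy is to tailor the $e_\iota$ to the $M$-bimodule structure of $L^2M$, using a diagonal construction over a dense sequence $\{x_n\} \subset (M)_1$ together with a decreasing family of projections $q \in \mathcal P(M)$ of small trace, to arrange that the ranges of $[e_\iota, x_n]\,T\,e_\iota$ lie inside $q L^2M$ and thus become $\text{\rm q}_M$-small. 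The $\|\cdot\|_2$-to-$\text{\rm q}_M$-continuity of $\delta$ on $(M)_1$ (recalled in the paragraph preceding Theorem~\ref{thrm:1.2}) then extends pointwise convergence from the dense sequence to all of $M$; the non-separable case follows by the analogous net construction indexed by finite subsets of $M$ together with $\varepsilon$-tolerances.
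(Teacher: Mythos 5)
There is a genuine gap, and it sits exactly where you place your ``main obstacle.'' Your whole argument reduces to making the cross terms $[e_\iota,x]Te_\iota$ and $e_\iota T[e_\iota,x]$ small in $\text{\rm q}_M$, and this step is not proved: the ``diagonal construction'' is only named, and the mechanism you sketch is inconsistent with what you need elsewhere. Indeed, the range of $e_\iota T[e_\iota,x]$ lies in the range of $e_\iota$, so forcing these ranges into $qL^2M$ with $\tau(q)$ small forces $e_\iota\leq q$ (as left multiplication), and then $\|e_\iota\hat 1-\hat 1\|_2\geq \tau(1-q)^{1/2}$, contradicting the SOT convergence $e_\iota\uparrow 1$ that you use to kill the compact pieces $(1-e_\iota)C$ and $C(1-e_\iota)$. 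More conceptually, asking for finite-rank projections $e_\iota\to 1$ that (even after multiplying by $T$) asymptotically commute with a generating set of $M$ is a quasidiagonality-type demand: for $M=L\mathbb F_2$ in standard form no finite-rank projections tending to $1$ strongly can asymptotically commute in operator norm with the canonical unitaries (otherwise $C^*_r(\mathbb F_2)$ would be quasidiagonal), so any successful choice of $e_\iota$ would have to exploit $\text{\rm q}_M$-smallness, i.e.\ the freedom to discard a small-trace corner of $M$, in an essential and nontrivial way. That is precisely the content of the theorem, so as written the proposal assumes what it must prove. (There is also a secondary circularity in the first two terms: the projection $p$ and the compact $C$ depend on $x$ and $\varepsilon$, while the net $e_\iota$ must be fixed in advance and made to commute with all such $p$.)

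For comparison, the paper avoids any explicit compression of $T$. Given a finite set $F=\{x_1,\dots,x_n\}$ and $\varepsilon>0$, it first chooses $p\in\mathcal P(M)$ with $\tau(1-p)<\varepsilon/2$ such that $p\delta(x_i)p\in\K(L^2M)$ for all $i$ (Proposition~\ref{prop:6.2}), and then considers the convex set $C\subset\K(L^2M)^n$ of tuples $\bigl(p\delta(x_i)p-p[K,x_i]p\bigr)_i$ with $K$ finite-rank, $\|K\|\leq\|T\|$. Since $T$ is a weak operator limit of such $K$ and $\delta=\text{\rm ad}(T)$, the zero tuple lies in the $\sigma(\B(L^2M)^n,\B^*_{\text{\rm n}}(L^2M)^n)$-closure of $C$; by convexity this weak closure coincides with the norm closure inside $\K(L^2M)^n$, so some finite-rank $K$ achieves $\|p\delta(x_i)p-p[K,x_i]p\|<\varepsilon/2$ for all $i$, whence $\text{\rm q}_M(\delta(x_i),[K,x_i])<\varepsilon$. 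This Hahn--Banach argument produces $K$ abstractly, depending on $F$ and $\varepsilon$, rather than as $eTe$; the sequence in the separable case then comes from Lemma~\ref{lemma:7.6}. If you want to salvage your approach, you would need either to prove the $\text{\rm q}_M$-almost-commutation property for suitably chosen finite-rank projections (which is not known and is at least as hard as the theorem), or to replace the explicit compression by a softness argument of this convexity type.
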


Thus, one has to strengthen the topology on Z$^1(M, \text{\rm q}\K_M)$  
so that the corresponding closure $\overline{\text{\rm B}^1}$ of the space of inner derivations gives all Z$^1(M, \text{\rm q}\K_M)$ in case $M$ is amenable,  
and more generally when $M$ satisfies various ``good decomposition'' 
properties with respect to its amenable subalgebras (like existence of Cartan subalgebras), but is not the entire Z$^1(M, \text{\rm q}\K_M)$ in general, notably for $M=L\mathbb F_n$. 
The resulting relevant 1-cohomology space would then be defined as $\tilde{\text{\rm H}}^1(M, \text{\rm q}\K_M):=\text{\rm Z}^1(M, \text{\rm q}\K_M)/\overline{\text{\rm B}^1}$. 
An alternative, but closely related strategy is to slightly modify the ``target'' $M$-bimodule $\text{\rm q}\K_M$ to a bimodule $\mathcal B$ 
that would still host outer derivations $\delta_c$ arising from non-inner $1$-cocycles of $\Gamma$ 
when $M=L\Gamma$, but would in turn lead to vanishing cohomology when $M$ is amenable, and more generally when $M$ satisfies various ``good decomposition'' 
properties as above. For instance, by taking $\mathcal B$ to be a suitable quotient of $\text{\rm q}\K_M$, or of some modified version of this space. 
We will investigate all these possibilities in a future work.

The paper is organized as follows. In Section 2 we recall some basic definitions about $M$-bimodules and $L^p$-spaces 
associated with a tracial von Neumann algebra $M$. In Section 3 we define for each $p\geq 2$ 
the space $\B^*(p)$ of functionals  
$\varphi$ on $\B:=\B(L^2M)$ with the property that \[
\|\varphi\|_{\B^*(p)}:= \sup\{ |\varphi(xYz)| \leq 1, \forall Y\in (\B)_1, 
x, y \in  M \cap (L^pM)_1\}<\infty.
\]
In Section 4 we consider  its predual, $\B(p)$, obtained as the completion of $\B$ 
in the norm $\vertiii{T}_p := \sup \{|\varphi(T)| \mid 
\varphi \in (\B^*(p))_1 \}$. In Section 5 we define the subspace 
$\K(p)\subset \B(p)$, obtained as the closure in $\B(p)$ of the space of compact operators $\K(L^2M)$, whose dual  identifies naturally 
to the ``normal part'' $\B_{\text{\rm n}}^*(p)$ of $\B^*(p)$. In Section 6 we define the space $\K_p\subset \B$ 
of operators that are $\vertiii{ \, \cdot \, }_p$-limits of bounded sequences of operators in $\K(L^2M)$, define the $\tau$-rank topology on $M$-bimodules, and prove 
Theorem~\ref{thrm:1.1}, showing that all $\K_p$ coincide with the closure $\text{\rm q}\K_M$ of $\K(L^2M)$ in $\B(L^2M)$, in the $\tau$-rank topology (see Theorem~\ref{thrm:6.5}). Then in Section 7 we 
consider the space of derivations of $M$ into $\text{\rm q}\K_M$ and prove Theorems~\ref{thrm:1.2} and \ref{thrm:1.3} (see \ref{thrm:7.3} and \ref{thrm:7.7}).

\section{Preliminaries}
\subsection{Banach bimodules}
 Given a unital Banach algebra $M$ (which will  typically be 
a tracial von Neumann algebra in this paper), a {\it Banach $M$-bimodule} $\B$ is a Banach space 
with {\it left and right  multiplication} operations 
$M \times \B \ni (x, T) \mapsto xT\in  \B$, $\B \times M \ni (T, x) \mapsto Tx \in \B$ (i.e., bilinear maps satisfying  $x(yT)=(xy)T$, 
$(Tx)y=T(xy)$, and $1_MT=T1_M=T$, $\forall x, y\in M, T\in \B$)   that 
satisfy the conditions $\|xT\|_{\B}\leq \|x\|_M \|T\|_{\B}$, $\|Tx\|_{\B}\leq \|T\|_{\B} \|x\|_M$, $\forall x\in M, T\in \B$.

If in addition $\B$ is the dual of a Banach space $\B_*$ and for each $x\in M$ the maps $\B \ni T \mapsto xT \in \B$, 
$\B \ni T \mapsto Tx \in \B$ are continuous with respect to the $\sigma(\B, \B_*)$ topology 
(also called weak$^*$-topology), then 
$\B$ is called a {\it dual  $M$-bimodule}.  Finally, if $M$ is a von Neumann algebra, $\B$ is a dual $M$-bimodule, and for each $T\in \B$ 
the maps $M \ni x \mapsto xT \in \B$, $M\ni T \mapsto Tx \in \B$  are continuous from $(M)_1$ with the $\sigma(M, M_*)$-topology 
to $\B$ with the $\sigma(\B, \B_*)$-topology, then we say that the 
dual $M$-bimodule $\B$ is {\it normal}.

\subsection{Examples}
 A typical example of a Banach $M$-bimodule that we will consider here is when $\B$ is a larger unital Banach algebra that contains $M$ (with $1_M=1_{\B}$), with the left and right 
products $xT, Tx$ for $x\in M$, $T\in \B$, being the restrictions of the product in the larger algebra $\B$. Note that in case $M\subset \B$ 
is an inclusion of von Neumann algebras, $_M\B_M$ is in fact a normal dual $M$-bimodule. 

These examples entail two more classes  of examples of Banach $M$-bimodules. 

On the one hand,  one can take a norm-closed two sided ideal $\J$ in the Banach algebra $\B$, which will have a natural $M$-bimodule structure  
by restriction from $\B$. 

On  the other hand, one can take the dual $\B^*$ of $\B$ with the $M$-bimodule structure given by $\B^*\ni \varphi 
\mapsto x \cdot \varphi \cdot y \in \B^*$, $\forall x, y\in M$, which for $T\in \B$ is defined by $x \cdot \varphi \cdot y (T)=\varphi(yTx)$. This is easily seen to implement a Banach 
dual $M$-bimodule structure on $\B^*$. 

A particular case of this latter type of examples will be of interest to us. Thus, we fix a tracial von Neumann algebra  
$(M,\tau)$ (so $\tau$ is a normal faithful trace state on $M$) and let $M \subset \B=\B(L^2M)$ be its standard representation, where $L^2M$ 
is the Hilbert space obtained by completing $M$ in the norm $\|x\|_2=\tau(x^*x)^{1/2}$, $x\in M$, and $M$ acts on it by left multiplication. 
This makes $\B$  into a dual normal $M$-bimodule. Moreover, since 
$M^{op}$ acts on $L^2M$ as well, by right multiplication,  $\B$ also has dual normal $M^{op}$-bimodule  structure. Since 
$M, M^{op}$ commute  (in fact $M'\cap \B=M^{op}$, $(M^{op})'\cap \B=M$), the two bimodule structures commute, 
in other words they implement a $M \, \otimes_{\rm alg} \, M^{op}$-bimodule structure on $\B$. 

From the preceding remarks, these two bimodules structures on $\B$ entail dual  Banach $M$-bimodule and $M^{op}$-bimodule structures on $\B^*$.

\subsection{Non-commutative $L^p$-spaces}  Recall that $\|y\|_p=\tau(|y|^p)^{1/p}$, $y\in M$, defines a norm on $M$, 
with $\|y\|_p$ being increasing in $p$ and  the limit $\lim_{p \to \infty} \|y\|_p$ 
equal to the operator norm $\| y \|_\infty = \|y\|$. The completion of $M$  
in the norm $\| \cdot \|_p$ is denoted by $L^pM$. One has $L^pM\supset L^{p'}M$ whenever $p'\geq p$. Also, $L^pM$ 
identifies naturally with the  space of densely defined closed operators $Y$ on $L^2M$ that are affiliated with $M$ 
and have the property that $|Y|$ has spectral decomposition $|Y|=\int \lambda \text{\rm d}e_\lambda$ satisfying $\int \lambda^p \text{\rm d}\tau(e_\lambda) < \infty$. 

If $1\leq p\leq \infty$, then  $(L^pM)_1$ is closed in $L^{p'}M$, for any $1\leq p'\leq p$. Moreover, all of the $\| \cdot \|_{p'}$-topologies on the unit ball $(M)_1$ of $M$ for $1\leq p'<\infty$ coincide 
with the $so$-topology on $(M)_1$ and if $p<\infty$, then all $\| \cdot \|_{p'}$-topologies on $(L^pM)_1$, $1\leq p'\leq p$ coincide with the $\| \cdot \|_p$-topology. 

Recall that if $1\leq p <\infty$, then $(L^pM)^*\simeq L^qM$,  where $q=\frac{p}{p-1}$ (with the usual convention 
$1/0=\infty$), the duality being given by $(\xi, \zeta) \mapsto \tau(\zeta^* \xi)$ 
for $\xi \in L^pM$, $\zeta\in L^qM$, viewed as operators affiliated with $M$. This also shows that 
if $y\in M$ and $1\leq p, q \leq \infty$ with $\frac{1}{p}+\frac{1}{q}=1$, then $\|y\|_p=\sup \{|\tau(yz)| \mid z\in (L^qM)_1 \}$. 

Note also that if $x, y \in M$, $\xi \in L^pM$, then $\|x\xi y\|_p\leq \|x\| \|\xi\|_p \|y\|$,  
making $L^pM$ into a Banach $M$-bimodule, which is dual and normal if $1< p < \infty$. Note that if $\xi \in L^pM, \eta\in L^{p'}M$, then $\xi \eta \in L^qM$ where 
$q=\frac{pp'}{p+p'}$.

\subsection{ Smooth bimodules} \label{sec:2.4}

Recall from \cite{GP14} that a Banach $M$-bimodule $\B$ is {\it smooth}, if for any $T\in \B$ the maps $x\mapsto xT$ 
and $x\mapsto Tx$ are continuous from the unit ball of $M$ with its $\| \cdot \|_2$-topology to $\B$ with its 
Banach norm topology. 

A typical example much emphasised in \cite{P85}, \cite{GP14} is when $\B$ is the ideal of compact 
operators $\K(L^2M)\subset \B(L^2M)$, with its $M$-bimodule structure inherited from the $M$-bimodule $\B(L^2M)$. 

Another example, studied in \cite{PR89}, is when $M$  is contained (as a von Neumann subalgbera) in a II$_\infty$ factor $\mathcal M$ with a 
normal semifinite faithful trace $Tr$ and $\B$ is the norm closed $^*$-ideal of ``compact operators'' $\mathcal J(\mathcal M)\subset \mathcal M$, 
consisting of $T\in \mathcal M$ with the property that all spectral projections $e_{[t, \infty)}(T^*T)$ corresponding to $t>0$ have finite trace, $\forall t>0$.

Indeed, in both these cases, it is shown in \cite{P85}, respectively \cite{PR89}, that the Banach $M$-bimodule $\B$, endowed with its corresponding operator norm, 
is smooth in this sense. 

One should mention that in both these examples, the norm $\| \cdot \|$ on the $M$-bimodule $\B$ satisfies a certain {\it operatorial} condition 
(see \cite{GP14}), requiring that if $T\in \B$, then $\|pTp + (1-p)T(1-p)\|=\text{\rm max}\{\|pTp\|, \|(1-p)T(1-p)\|\}$. However, 
in the examples of Banach $M$-bimodules that we will consider in this paper, this property doesn't  hold true in general.

\section{The dual Banach $M$-bimodules $\B^*(p)$, $2\leq p < \infty$  }

We now fix a tracial von Neumann algebra $(M, \tau)$ and we set $\B = \B(L^2M)$. We first consider a one parameter family of $M$ sub-bimodules $\B^*(p) \subset \B^*$, $2\leq p < \infty$,  
defined as spaces of  functionals on $\B$ that are ``$L^p$-smooth 
relative to $M$''.

\begin{defn} Let $2\leq p < \infty$. We denote by $\B^*(p)$ the subspace of functionals $\varphi \in \B^*=\B(L^2M)^*$ with the property that 
\[
\|\varphi\|_{\B^*(p)} := \sup \{|\varphi(xTy)| \text{ }\mid T\in (\B)_1, x, y \in M, \|x\|_p, \|y\|_p \leq 1\}
\]
is finite. Note right away that $\B^*(p)$ 
is a vector subspace of $\B^*$ and that $\| \cdot \|_{\B^*(p)}$ is a norm on it that majorizes the usual norm of functionals in $\B^*$. 
\end{defn}

\begin{prop}\label{prop:3.2}
\begin{enumerate}[label=${\arabic*}^\circ$]
\item
The space $\B^*(p)$ is a Banach space with respect to the norm $\| \cdot \|_{\B^*(p)}$. 
\item
If $2\leq p' \leq  p< \infty$, then $\B^*(p')\subset \B^*(p)$. Moreover, for any $\varphi\in \B^*$ we have $\|\varphi\| \leq \|\varphi\|_{\B^*(p)} \leq \|\varphi\|_{\B^*(p')}$.  
Thus,  $\lim\limits_{p \to \infty} \|\varphi\|_{\B^*(p)} = \inf\limits_{p \to \infty} \|\varphi\|_{\B^*(p)} \geq \|\varphi\|$. 
\end{enumerate} 
\end{prop}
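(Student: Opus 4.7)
For part (1), the key observation is that $\|\cdot\|_{\B^*(p)}$ dominates the ambient dual norm on $\B^*$. Indeed, applying the defining supremum with $x = y = 1 \in M$, which is legitimate since $\|1\|_p = \tau(1)^{1/p} = 1$, gives $|\varphi(T)| \leq \|\varphi\|_{\B^*(p)}$ for every $T \in (\B)_1$. Consequently any $\|\cdot\|_{\B^*(p)}$-Cauchy sequence $(\varphi_n) \subset \B^*(p)$ is also Cauchy in $\B^*$, and thus converges in dual norm to some $\varphi \in \B^*$. To promote this to convergence in $\B^*(p)$ I fix $\epsilon > 0$, choose $N$ with $\|\varphi_n - \varphi_m\|_{\B^*(p)} < \epsilon$ for $n, m \geq N$, and for each admissible triple $(x, T, y)$ (i.e.\ $\|x\|_p, \|y\|_p \leq 1$ and $T \in (\B)_1$) pass to the limit $m \to \infty$ in $|(\varphi_n - \varphi_m)(xTy)| < \epsilon$. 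Since $xTy$ is a fixed element of $\B$ and $\varphi_m \to \varphi$ in $\B^*$, this yields $|(\varphi_n - \varphi)(xTy)| \leq \epsilon$ uniformly in the triple, hence $\|\varphi_n - \varphi\|_{\B^*(p)} \leq \epsilon$ for $n \geq N$. This simultaneously shows $\varphi = \varphi_N - (\varphi_N - \varphi) \in \B^*(p)$ and that $\varphi_n \to \varphi$ in the stronger norm.

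For part (2), the mechanism is the containment of unit balls in the noncommutative $L^p$-spaces. The preliminaries record that $p \mapsto \|x\|_p$ is increasing (an immediate consequence of $\tau$ being a state), so whenever $2 \leq p' \leq p$ and $\|x\|_p \leq 1$ one also has $\|x\|_{p'} \leq 1$. Thus the set of pairs $(x, y) \in M \times M$ admitted by the supremum defining $\|\varphi\|_{\B^*(p)}$ is contained in the corresponding set for $\|\varphi\|_{\B^*(p')}$, which gives $\|\varphi\|_{\B^*(p)} \leq \|\varphi\|_{\B^*(p')}$ and in particular the inclusion $\B^*(p') \subset \B^*(p)$. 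Combined with the $x = y = 1$ test from part (1), this produces the chain $\|\varphi\| \leq \|\varphi\|_{\B^*(p)} \leq \|\varphi\|_{\B^*(p')}$. Since $p \mapsto \|\varphi\|_{\B^*(p)}$ is then monotone decreasing on $[2, \infty)$ and bounded below by $\|\varphi\|$, it converges to its infimum, which is necessarily at least $\|\varphi\|$.

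Neither step presents a genuine obstacle; the proof is essentially formal. The only moment demanding a modicum of care is the Cauchy argument in (1), where one must extract a $\|\cdot\|_{\B^*(p)}$-bound on $\varphi - \varphi_n$ without any a priori control on $\varphi$ in that norm. This is handled triple-by-triple, using only the weaker $\B^*$-norm convergence $\varphi_m \to \varphi$ supplied by the domination inequality.
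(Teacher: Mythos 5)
Your proof is correct and follows essentially the same route as the paper: domination of the dual norm gives a $\B^*$-limit of a $\|\cdot\|_{\B^*(p)}$-Cauchy sequence, which is then upgraded by testing against elements $xTy$, and part (2) is the same unit-ball containment argument from monotonicity of $p\mapsto\|x\|_p$. Your single-step limiting argument (passing to $m\to\infty$ triple-by-triple) is a slightly cleaner packaging of the paper's two-step membership-then-convergence argument, but it is not a genuinely different approach.
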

\begin{proof}
$(1)$
It remains to check that $\B^*(p)$ is complete with respect to the $\| \cdot \|_{\B^*(p)}$-norm. So take a Cauchy sequence $(\varphi_n)$ in $(\B^*(p) ,\| \cdot \|_{\B^*(p)}) $. Since the norm $\| \cdot \|_{\B^*(p)}$ majorizes the norm $\| \cdot \|_{\B^*}$, the sequence $(\varphi_n)$ is also Cauchy in $\B^*$.  Let $\varphi$ be its $\| \cdot \|_{\B^*}$-norm limit  in $\B^*$. We claim first that $\varphi \in \B^*(p)$.  Take any $T \in (\B)_1$  and $x,y \in M$ with $\|x\|_p , \|y\|_p \leq 1$. Since $\varphi_n \to \varphi$ with respect to the $\| \cdot \|_{\B^*}$-norm, we can find an $m$ such that 
$$\| \varphi - \varphi_m \|_{\B^*} \leq \| xTy \|^{-1}.$$
In particular, we see that 
\begin{equation}
\begin{split}
|\varphi(xTy)|
&\leq
|\varphi(xTy) - \varphi_m(xTy)| + |\varphi_m(xTy)|
\\
&\leq
\| xTy \| \| \varphi - \varphi_m \| + \sup_{n \geq 1}  |\varphi_n(xTy)|
\\
&\leq
1 + \sup_{n \geq 1}  \|\varphi_n\|_{\B^*(p)}.
\\
\end{split}
\end{equation}
This last quantity is finite since $(\varphi_n)$ was assumed to be Cauchy in $\B^*(p)$. It follows then that $|\varphi(xTy)|$ is uniformly bounded over all $T \in (\B)_1$  and $\|x\|_p , \|y\|_p \leq 1$, and so $\varphi \in \B^*(p)$.

It remains to check that $\varphi_n \to \varphi$ with respect to the $\| \cdot \|_{\B^*(p)}$ norm. To do this, let $S$ be the set of elements $X$ in $\mathcal B$ of the form $xTy$ with $T\in (\mathcal B)_1$ and $x, y\in M$, $\|x\|_p, \|y\|_p\leq 1$. Then we have 
\begin{equation}
\begin{split}
\lim_{n\to \infty}   \|\varphi   - \varphi_n\|_{\B^*(p)}
&=
\lim_{n\to \infty}  
\sup_{X \in S}|\varphi(X)   - \varphi_n(X)|
\\
&=
\lim_{n\to \infty}  
\sup_{X \in S}
\lim_{m\to \infty}
|\varphi_m(X)   - \varphi_n(X)|
\\
&\leq
\lim_{n\to \infty}  
\lim_{m\to \infty}
\sup_{X \in S}
|\varphi_m(X)   - \varphi_n(X)|
\\
&=
\lim_{n,m \to \infty} \|\varphi_m   - \varphi_n\|_{\B^*(p)}.
\\
\end{split}
\end{equation}
Since $(\varphi_n)$ was Cauchy with respect to the $\| \cdot \|_{\B^*(p)}$ norm, it follows that  $(\varphi_n)$ also converge to $\varphi$ with respect to the  $\| \cdot \|_{\B^*(p)}$ norm. This shows $\B^*(p)$ is complete, and thus is a Banach space.

$(2)$
Now suppose $2\leq p' \leq p < \infty$. For any $x \in M$ we have that $\|x\|_{p'} \leq \|x\|_{p}$, so the set $\{xTy \mid  T\in (\B)_1, x, y \in M, \|x\|_p, \|y\|_p \leq 1\}$ is a subset of $\{xTy \mid  T\in (\B)_1, x, y \in M, \|x\|_{p'}, \|y\|_{p'} \leq 1\}$. Taking supremums in the definition of $\| \cdot \|_{\B^*(p)}$, we conclude $\|\varphi\|_{\B^*(p)} \leq \|\varphi\|_{\B^*(p')}$. The rest of the statement follows immediately. 

\end{proof}

\begin{prop}\label{prop:3.3}
\begin{enumerate}
[label=${\arabic*}^\circ$]
\item Let $2\leq p  < \infty$.   If $x, y \in M$ and $\varphi \in \B^*(p)$, then 
$$ \| x\cdot \varphi \cdot y\|_{\B^*(p)} \leq \|x\| \|y\| \|\varphi\|_{\B^*(p)},  \ \| x^{op}\cdot \varphi \cdot y^{op}\|_{\B^*(p)} \leq \|x^{op}\| \|y^{op}\| \|\varphi\|_{\B^*(p)}  .$$  
Thus, the $M$ and $M^{op}$ bimodule structures on $\B^*$ leave $\B^*(p)$ invariant and implement Banach $M$-bimodule 
and $M^{op}$-bimodule structures on $(\B^*(p), \| \cdot \|_{\B^*(p)})$. 

\vskip.05in

\item  The unit ball $(\B^*(p))_1$ is compact in the $\sigma(\B^*, \B)$ topology. 

\vskip.05in

\item  The unit ball $(\B^*(p))_1$ is norm closed in $\B^*$. 

\vskip.05in 

\item For each $x, y \in M$, the map $\B^*(p)\ni \varphi \mapsto x\cdot \varphi \cdot y\in \B^*(p)$ is 
continuous with respect to the $\sigma(\B^*, \B)$-topology. 
\end{enumerate}
\end{prop}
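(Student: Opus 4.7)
The four parts all follow from direct manipulation of the definition of $\|\cdot\|_{\B^*(p)}$, and I would prove them in the order (1), (4), (2), (3). The only tools needed are the submultiplicativity of $\|\cdot\|_p$ (namely $\|ya\|_p \leq \|y\|\cdot\|a\|_p$ and $\|bx\|_p \leq \|b\|_p \cdot \|x\|$), the fact recalled in Section~2.2 that $M^{op}$ and $M$ commute inside $\B(L^2M)$, Banach--Alaoglu, and the inequality $\|\varphi\|_{\B^*} \leq \|\varphi\|_{\B^*(p)}$ from Proposition~\ref{prop:3.2}.

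For (1), I would fix $x, y \in M$ and $\varphi \in \B^*(p)$ and evaluate $(x \cdot \varphi \cdot y)(aTb) = \varphi(yaTbx)$ with $T \in (\B)_1$ and $a, b \in M$ such that $\|a\|_p, \|b\|_p \leq 1$. Since $ya, bx \in M$ with $\|ya\|_p \leq \|y\|$ and $\|bx\|_p \leq \|x\|$, rescaling by $\|x\|\|y\|$ and applying the definition of $\|\varphi\|_{\B^*(p)}$ directly gives the bound $\|x\|\|y\|\|\varphi\|_{\B^*(p)}$. For the $M^{op}$-version, since $y^{op}$ commutes with $a \in M$ and $x^{op}$ commutes with $b \in M$, I would rewrite $y^{op} a T b x^{op} = a (y^{op} T x^{op}) b$; since $y^{op} T x^{op} \in \B$ has operator norm at most $\|x\|\|y\|$, this reduces once more to the definition. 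The Banach $M$- and $M^{op}$-bimodule structures on $\B^*(p)$ then follow immediately from these norm inequalities, together with the associativity and unit identities inherited from $\B^*$.

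For (4), the formula $(x \cdot \varphi \cdot y)(T) = \varphi(yTx)$ shows that, for each fixed $T \in \B$, the composition of $\varphi \mapsto x \cdot \varphi \cdot y$ with evaluation at $T$ is just evaluation at $yTx \in \B$, which is $\sigma(\B^*, \B)$-continuous by definition of the weak$^*$-topology; this establishes the desired continuity into $\B^*(p) \subset \B^*$ with the weak$^*$-topology. For (2), from Proposition~\ref{prop:3.2} we have $(\B^*(p))_1 \subset (\B^*)_1$, which is weak$^*$-compact by Banach--Alaoglu, so it suffices to check that $(\B^*(p))_1$ is weak$^*$-closed in $\B^*$. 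If a net $\varphi_\iota \in (\B^*(p))_1$ weak$^*$-converges to some $\varphi \in \B^*$, then for every admissible triple $(x, T, y)$ we have $\varphi_\iota(xTy) \to \varphi(xTy)$ with $|\varphi_\iota(xTy)| \leq 1$, so $|\varphi(xTy)| \leq 1$, giving $\|\varphi\|_{\B^*(p)} \leq 1$. Part (3) then falls out immediately: weak$^*$-compactness implies weak$^*$-closedness, and the norm topology on $\B^*$ is finer than the weak$^*$-topology.

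There is no serious obstacle here; the whole proposition is essentially an unpacking of definitions. The only piece of bookkeeping worth flagging in advance is the $M^{op}$-side of (1), where one has to commute $x^{op}, y^{op}$ past $a, b \in M$ in order to end up with an expression of the form $a \cdot (\,\cdot\,) \cdot b$ with $a, b$ in $M$ rather than $M^{op}$, so that the definition of $\|\varphi\|_{\B^*(p)}$ can be applied as stated.
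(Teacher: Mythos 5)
Your proposal is correct and follows essentially the same route as the paper's proof: the same rescaling argument with $\|ya\|_p\leq\|y\|$, $\|bx\|_p\leq\|x\|$ for the $M$-action, the same commutation of $x^{op},y^{op}$ past elements of $M$ to reduce the $M^{op}$-bound to the definition, and the same Banach--Alaoglu plus weak$^*$-closedness argument for $2^\circ$ (the paper phrases the closedness as an intersection of weak$^*$-closed sets $S_{x,y}$, you check it directly on nets, which is the same idea), with $3^\circ$ and $4^\circ$ handled identically.
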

\begin{proof}

$1^\circ$  
Take some $2\leq p <\infty$.  Fix elements $x, y \in M$ and a functional $\varphi \in \B^*(p)$. Let $T \in (\B)_1$ and $x', y' \in M$ with $\|x'\|_p , \|y'\|_p \leq 1$. Then, if we apply $x \cdot \varphi \cdot y $ to $x' Ty'$, we get  
$$
|[x \cdot \varphi \cdot y ](x' T y') |
=
| \varphi( y x' T y' x) |
=
\| x \|   \|y\|
\left|\varphi\left( \frac{y x'}{\|y\| } T \frac{y' x}{\|x\| }
\right) \right|.
$$
Notice that we have the bounds  $\| y x' / \|y\| \|_p \leq 1$ and  $\| y' x / \|x\| \|_p \leq 1$. It follows by definition then that 

$$
|[x \cdot \varphi \cdot y ](x' T y') |
=
\| x \|   \|y\|
\left|\varphi\left( \frac{y x'}{\|y\| } T \frac{y' x}{\|x\| }
\right) \right|
\leq 
\| x \|   \|y\| \| \varphi \|_{\B^*(p)}.
$$
Taking the supremum over all $T \in (\B)_1$ and all  $x', y' \in M$ with $\|x'\|_p , \|y'\|_p \leq 1$ gives the bound $\| x\cdot \varphi \cdot y\|_{\B^*(p)} \leq \|x\| \|y\| \|\varphi\|_{\B^*(p)}$ as desired.

Let us now fix $x^{op}, y^{op} \in M^{op}$. Take $T \in (\B)_1$ and $x', y' \in M$ with $\|x'\|_p , \|y'\|_p \leq 1$. As  we did before, if we apply $x^{op} \cdot \varphi \cdot y^{op} $ to $x' Ty'$, we get 
$$|[x^{op} \cdot \varphi \cdot y^{op} ](x' T y') |
=
| \varphi( y^{op} x' T y' x^{op}) |
=
\| x^{op}\| \| y^{op} \|
\left| \varphi\left(x' \frac{y^{op}}{\| y^{op}\|}  T  \frac{x^{op} }{ \| x^{op}\|} y'\right) 
\right|.
$$
Here this operator  $ y^{op}  T  x^{op} / \| x^{op }\| \| y^{op} \|$ has norm at most 1, so by definition we get 
$$|[x^{op} \cdot \varphi \cdot y^{op} ](x' T y') |
\leq 
\| x^{op}\| \| y^{op} \| \| \varphi \|_{\B^*(p)}.
$$
Taking supremums over all $T, x' , y'$ will give  $\| x^{op}\cdot \varphi \cdot y^{op}\|_{\B^*(p)} \leq \|x^{op}\| \|y^{op}\| \|\varphi\|_{\B^*(p)} $.

$2^\circ$
For $x,y \in M$,  let $S_{x,y} \subset \B^*$ be the set of all functionals $\varphi \in \B^*$ such that $\| x \cdot \varphi \cdot y\|_{\B^*} \leq 1$. It is clear from the  definitions that 
$$(\B^*(p))_1 = \bigcap_{\|x \|_p , \| y \|_p \leq 1} S_{x,y}.$$
Now each of these  sets $S_{x,y}$ is closed in the $\sigma(\B^*, \B)$ topology so $(\B^*(p))_1$ is also closed in this topology. Furthermore, notice since the norm $\| \cdot \|_{\B^*(p)}$ majorizes the operator norm on $\B^*$ that $(\B^*(p))_1 \subset (\B^*)_1$. The Banach-Alaoglu theorem then gives us that  $(\B^*(p))_1$ is compact.

$3^{\circ}$
This is just a consequence of $2^{\circ}$.

$4^{\circ}$
Fix elements $x, y \in M$. From  $1^{\circ}$, we know that the map $\varphi \mapsto x\cdot \varphi \cdot y$ is a well defined linear map from $\B^*(p)$ to itself. It is also a $\sigma(\B^*, \B)$ continuous map on the whole space $\B^*$, so restricting to $\B^*(p)$ proves the claim. 

\end{proof}

\begin{lemma}\label{lemma:3.4} Let $2\leq p <  \infty$ and $\varphi\in \B^*$. 
Assume $\varphi=\omega_{\xi, \eta}$ for some $\xi, \eta \in L^2M$. Then  $\varphi \in \B^*(p)$ if and only if $\xi, \eta \in L^qM$, 
where $q=\frac{2p}{p-2}$, with the 
conventions $\frac{1}{0}=\infty$. Moreover, if this is the case, then $\|\omega_{\xi,\eta}\|_{\B^*(p)}=\|\xi\|_q \|\eta\|_q$. 

\end{lemma}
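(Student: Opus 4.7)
The plan is to reduce the problem to a standard Hilbert-space supremum combined with a sharp $L^p$-$L^q$ duality on $M$. Writing $\omega_{\xi,\eta}(xTy) = \langle T(y\xi),\, x^*\eta\rangle$, for each fixed $x,y \in M$ the inner supremum over $T \in (\B)_1$ equals $\|y\xi\|_2\,\|x^*\eta\|_2$, since for vectors $\alpha,\beta$ in the Hilbert space $L^2M$ one has $\sup_{T\in (\B)_1}|\langle T\alpha,\beta\rangle| = \|\alpha\|_2\|\beta\|_2$ (attained by a normalized rank-one operator). The resulting expression factors as a product of a function of $x$ alone with a function of $y$ alone, so the joint supremum splits:
\[
\|\omega_{\xi,\eta}\|_{\B^*(p)} \;=\; \Bigl(\sup_{y\in M,\,\|y\|_p\leq 1}\|y\xi\|_2 \Bigr)\Bigl(\sup_{x\in M,\,\|x\|_p\leq 1}\|x^*\eta\|_2\Bigr).
\]

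The core task then reduces to proving, for any $\zeta \in L^2M$ and $q = 2p/(p-2)$, the identity
\[
\sup\bigl\{\|y\zeta\|_2 : y\in M,\ \|y\|_p\leq 1\bigr\} \;=\; \|\zeta\|_q,
\]
understood in $[0,+\infty]$ (so both sides are simultaneously infinite when $\zeta \notin L^qM$). The ``$\leq$'' direction is immediate from the noncommutative Hölder inequality $\|y\zeta\|_2 \leq \|y\|_p\|\zeta\|_q$, which uses $1/p+1/q=1/2$. For the ``$\geq$'' direction I would polar-decompose $\zeta = u|\zeta|$ with $u \in M$ a partial isometry, truncate by the spectral projections $g_n = \chi_{(0,n]}(|\zeta|)$, and test against the explicit element $y_n := |\zeta|^{q/p} g_n u^* \in M$. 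A short trace calculation, invoking the algebraic identity $q/p + 1 = q/2$ (equivalent to $1/p+1/q=1/2$), shows that both $\|y_n\|_p^p$ and $\|y_n\zeta\|_2^2$ equal $\tau(|\zeta|^q g_n)$, so the ratio $\|y_n\zeta\|_2/\|y_n\|_p$ equals $\tau(|\zeta|^q g_n)^{1/q}$ and increases to $\|\zeta\|_q$ as $n \to \infty$. Applied to $x^*$ (which has the same $p$-norm as $x$), the same identity yields the second supremum as $\|\eta\|_q$; combining the two gives $\|\omega_{\xi,\eta}\|_{\B^*(p)} = \|\xi\|_q\|\eta\|_q$ and, in particular, the equivalence in the statement.

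The step I expect to be most delicate is this lower bound when $\zeta$ is unbounded: the spectral truncation is what keeps $y_n$ inside $M$ (rather than merely in $L^pM$), and one must carefully check that the normalization $y_n/\|y_n\|_p$ still witnesses the full value $\|\zeta\|_q$ in the limit, including the case $\|\zeta\|_q = +\infty$. A minor separate subtlety arises at the endpoint $p=2$, where $q=\infty$ and the exponent $q/p$ is no longer meaningful: there one recovers the same conclusion by taking instead $y_n = g_n u^*/\tau(g_n)^{1/2}$ with $g_n = \chi_{(n,\infty)}(|\zeta|)$, which forces $\|y_n\zeta\|_2/\|y_n\|_2 \geq n$ whenever $\zeta \notin M$.
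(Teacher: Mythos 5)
Your proposal is correct and takes essentially the same route as the paper: Cauchy--Schwarz together with a (rank-one) operator attaining equality reduces $\|\omega_{\xi,\eta}\|_{\B^*(p)}$ to the product $\bigl(\sup_{\|y\|_p\le 1}\|y\xi\|_2\bigr)\bigl(\sup_{\|x\|_p\le 1}\|x\eta\|_2\bigr)$, which is then identified with $\|\xi\|_q\|\eta\|_q$ by H\"older duality. The only difference is that the paper simply cites the noncommutative H\"older inequality for the identity $\sup_{\|y\|_p\le 1}\|y\zeta\|_2=\|\zeta\|_q$, whereas you verify its lower bound explicitly via polar decomposition and spectral truncation, which also cleanly covers the case $\|\zeta\|_q=\infty$ needed for the ``only if'' direction.
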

\begin{proof}
Take elements $x, y \in M$ with $\|x\|_p , \|y\|_p \leq 1$ and an operator $T \in (\B)_1$. By Cauchy-Schwartz, we have a bound
$$
|  \omega_{\xi, \eta}(x T y)  |
=
|\langle  xT y \xi, \eta \rangle  |
=
|\langle  T y \xi, x^*\eta \rangle  |
\leq
\|T y \xi\|_2 \|x^* \eta\|_2
\leq 
\| y \xi\|_2 \|x^* \eta\|_2.
$$
This gives a bound
$$
\|\omega_{\xi, \eta}\|_{\B^*(p)}  
\leq 
\sup_{\|y\|_p\leq 1} \|y \xi\|_2  \sup_{\|x\|_p \leq 1} \|x^* \eta\|_2
=
\sup_{\|y\|_p\leq 1} \|y \xi\|_2  \sup_{\|x\|_p \leq 1} \|x \eta\|_2.
$$
Notice that the reverse inequality also holds. For if $x,y \in M$ are fixed with $\|x\|_p , \|y\|_p \leq 1$, consider the rank-one partial isometry $T_{x,y} \in (\B)_1$ that maps $y \xi$ to $\frac{\|y\xi\|}{\|x^* \eta\|} x^* \eta$. Then 
$$\| y \xi\|_2 \|x^* \eta\|_2 = |\omega_{\xi, \eta}(x T_{x,y} y )|
\leq 
\|\omega_{\xi, \eta}\|_{\B^*(p)}.  
$$
Taking the supremum over $x$ and $y$ gives us the reverse inequality. 

So far we have that 
$$
\|\omega_{\xi, \eta}\|_{\B^*(p)}  
=
\sup_{\|y\|_p\leq 1} \|y \xi\|_2  \sup_{\|x\|_p \leq 1} \|x \eta\|_2.
$$
If we now use the non-commutative version of H\"{o}lder's inequality, then 
$$ \sup_{\|y\|_p\leq 1} \|y \xi\|_2  = \|\xi\|_q,$$ 
where $1/p + 1/q = 1/2$, or $q = \frac{2p}{p-2}$. Similarly, 
$$
\sup_{\|x\|_p \leq 1} \|x \eta\|_2 = \|\eta\|_q.
$$
This gives the desired result $\|\omega_{\xi, \eta}\|_{\B^*(p)}   = \|\xi\|_q \|\eta\|_q$.

\end{proof}

A version of the previous lemma actually works for arbitrary positive finite-rank functionals $\varphi \in \B^*(p)$. 

\begin{lemma}\label{lemma:3.5}
Fix $2\leq p <  \infty$, and let $q = \frac{2p}{p-2}$ be as in the last lemma. Let $\varphi \in B^*(p)$ be of the form $\varphi = \sum_{i=1}^n \omega_{\xi_i, \xi_i}$ where $\xi_1, \xi_2, \dots, \xi_n$ are in $L^{q}M$. Then 
$$\|\varphi\|_{\B^*(p)} = \left|\left| \sum_{i=1}^n \xi_i\xi_i^* \right|\right|_{q/2}.$$
\end{lemma}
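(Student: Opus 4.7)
The plan is to establish the two bounds $\|\varphi\|_{\B^*(p)} \leq \|A\|_{q/2}$ and $\|\varphi\|_{\B^*(p)} \geq \|A\|_{q/2}$ separately, where $A := \sum_{i=1}^n \xi_i\xi_i^* \in (L^{q/2}M)_+$. Both halves exploit that $(p/2, q/2)$ is a Hölder-conjugate pair, equivalent to the identity $\tfrac{1}{p}+\tfrac{1}{q}=\tfrac{1}{2}$ featured in Lemma~\ref{lemma:3.4}. This lemma is the case $n=1$, and the strategy will mimic its proof, but replacing the ``rank-one partial isometry'' trick by the simpler choice $T=1$.

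For the upper bound, I would generalize the Cauchy--Schwartz computation: for $x,y\in M$ with $\|x\|_p,\|y\|_p\leq 1$ and $T\in(\B)_1$,
\begin{equation*}
|\varphi(xTy)| = \Bigl|\sum_i \langle T(y\xi_i), x^*\xi_i\rangle\Bigr| \leq \Bigl(\sum_i\|y\xi_i\|_2^2\Bigr)^{1/2}\Bigl(\sum_i\|x^*\xi_i\|_2^2\Bigr)^{1/2},
\end{equation*}
using $\|T\|\leq 1$, term-wise Cauchy--Schwartz in $L^2M$, and Cauchy--Schwartz for finite sums. Rewriting by the trace property, $\sum_i\|y\xi_i\|_2^2 = \tau(y^*yA)$ and $\sum_i\|x^*\xi_i\|_2^2 = \tau(xx^*A)$. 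Non-commutative Hölder with exponents $p/2$ and $q/2$ then gives $\tau(y^*yA)\leq \|y\|_p^2\|A\|_{q/2}$ and $\tau(xx^*A)\leq \|x\|_p^2\|A\|_{q/2}$, and the upper bound follows.

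For the lower bound, I would take $T=1\in(\B)_1$. For $x,y\in M$ one has $\varphi(xy)=\sum_i\tau(xy\xi_i\xi_i^*)=\tau(xyA)$, so
\begin{equation*}
\|\varphi\|_{\B^*(p)} \geq \sup\{|\tau(xyA)| : x,y\in M,\ \|x\|_p,\|y\|_p\leq 1\}.
\end{equation*}
Since $(p/2)'=q/2$, duality yields $\|A\|_{q/2}=\sup\{|\tau(zA)| : z\in L^{p/2}M,\ \|z\|_{p/2}\leq 1\}$, so it suffices to approximate each such $z$ by products $xy$ with $x,y\in M$ of $L^p$-norm at most $\|z\|_{p/2}^{1/2}$. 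I would polar-decompose $z=w|z|$ and truncate via the spectral projections $p_n=\chi_{[0,n]}(|z|)$, setting $x_n := w|z|^{1/2}p_n$ and $y_n := |z|^{1/2}p_n$. These lie in $M$ (being bounded), satisfy $\|x_n\|_p,\|y_n\|_p\leq \|z\|_{p/2}^{1/2}$, and $x_ny_n=zp_n\to z$ in $L^{p/2}M$, so $\tau(x_ny_nA)\to\tau(zA)$, giving the required bound in the limit.

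The only delicate step is the truncation argument in the lower bound, ensuring the approximating factors actually live in $M$ rather than just in $L^pM$; conceptually the whole proof is just Cauchy--Schwartz combined with non-commutative Hölder.
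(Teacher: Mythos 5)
Your proof is correct and follows essentially the same route as the paper's: the upper bound via termwise Cauchy--Schwartz, Cauchy--Schwartz for the finite sum, the trace rewriting and H\"older with exponents $p/2$, $q/2$; and the lower bound by taking $T=1$ and invoking $L^{p/2}$--$L^{q/2}$ duality. Your explicit polar-decomposition-and-truncation argument just fills in carefully what the paper disposes of with the phrase ``by density'' when passing from products $xy$ with $\|x\|_p,\|y\|_p\leq 1$ to the full unit ball of $L^{p/2}M$.
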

\begin{proof}
First we derive a lower bound for $\|\varphi\|_{\B^*(p)}$. Recall that $\|\varphi\|_{\B^*(p)}$ is a supremum over the values $|\varphi(xTy)|$, where $x,y \in M$ are such that  $\|x\|_p, \|y\|_p \leq 1$ and $T \in \B(L^2M)$ is such that $\|T\|\leq 1$. In particular, if we make $T$ the identity on $\B(L^2M)$,
\begin{equation}
\begin{split}
\|\varphi\|_{\B^*(p)} \geq 
\sup_{\|x\|_p,\|y\|_p \leq 1} |\varphi(xy)|
&=
\sup_{\|x\|_p,\|y\|_p \leq 1}
\left| \sum_{i=1}^n \langle xy\xi_i, \xi_i \rangle \right|\\
&=
\sup_{\|x\|_p,\|y\|_p \leq 1}
\left|\sum_{i=1}^n \tau( xy \xi_i \xi_i^* ) \right|\\
&=
\sup_{\|x\|_p,\|y\|_p \leq 1}
 \left| \tau\left(xy \sum_{i=1}^n\xi_i \xi_i^* \right) \right|.\\
\end{split}
\end{equation}
As $x$ and $y$ range over all elements with $p$ norm at most 1, $xy$ can be any element of $M$ with $p/2$ norm at most 1. By density, the above supremum is equal to  
$\sup_{\eta \in (L^{p/2})_1}
 \tau\left(\eta \sum_{i=1}^n\xi_i \xi_i^* \right)$, which by duality is the same as $\left|\left|  \sum_{i=1}^n \xi_i\xi_i^* \right|\right|_{r}$, where $r$ is the H\"{o}lder conjugate of $p/2$. A quick calculation gives
$$ \frac{1}{r} = 1- \frac{1}{p/2} = \frac{p-2}{p} = \frac{1}{q/2}.$$
So we get a lower bound $\left|\left|  \sum_{i=1}^n \xi_i\xi_i^* \right|\right|_{q/2}$ for $\|\varphi\|_{\B^*(p)}$.

Now we prove the reverse inequality. By definition, $\|\varphi\|_{\B^*(p)}$ is the supremum over all sums
$$ \varphi(xTy)
=
 \sum_{i=1}^n \langle xTy\xi_i, \xi_i \rangle
=
\sum_{i=1}^n \langle Ty\xi_i,  x^*\xi_i \rangle,
$$
where $\|x\|_p, \|y\|_p \leq 1$ and $\|T\|\leq 1$. By Cauchy Schwartz, any  one of the inner products $\langle T y \xi_i , x^* \xi_i \rangle$ is bounded by
$$
\langle T y \xi_i , x^* \xi_i \rangle
\leq 
\|x^* \xi_i\|_2 \|T y \xi_i\|_2 
\leq
\|x^* \xi_i\|_2 \|y \xi_i\|_2. 
$$
 Thus, we get the upper bound 

$$\|\varphi\|_{\B^*(p)} 
\leq
\sup_{\|x\|_p,\|y\|_p \leq 1}
\sum_{i=1}^n
\|x^* \xi_i\|_2 \|y \xi_i\|_2
=
\sup_{\|x\|_p,\|y\|_p \leq 1}
\sum_{i=1}^n
\|x \xi_i\|_2 \|y \xi_i\|_2.
$$
If we use H\"{o}lder's inequality, then 
\begin{equation}
\begin{split}
\|\varphi\|_{\B^*(p)} 
&\leq
\sup_{\|x\|_p,\|y\|_p \leq 1}
\left(	\sum_{i=1}^n \|x \xi_i\|_2^2	\right)^{1/2}
\left(	\sum_{i=1}^n\|y \xi_i\|_2^2	\right)^{1/2} \\
&\leq
\sup_{\|x\|_p \leq 1}
\sum_{i=1}^n \|x \xi_i\|_2^2.	\\
\end{split}
\end{equation}
Now, we can write $\|x \xi_i\|_2^2$ in terms of $\tau$ so that 
$$
\|\varphi\|_{\B^*(p)} 
\leq
\sup_{\|x\|_p \leq 1}
\sum_{i=1}^n \tau(x^*x \xi_i\xi_i^*) 
\leq 
\sup_{\|x\|_p \leq 1}
 \tau\left(x^*x \sum_{i=1}^n\xi_i\xi_i^*\right). 
$$
By the same duality argument, this is equal to $\left|\left|  \sum_{i=1}^n \xi_i\xi_i^* \right|\right|_{q/2}$. This completes the proof.
\end{proof}

We remark that  one can calculate an upper bound for $\| \varphi \|_{\B^*(p)}$ for an arbitrary finite-rank functional $\varphi \in \B^*(p)$ by using the polarization identity combined with Lemma~\ref{lemma:3.5}. For a general, not necessarily finite-rank, $\varphi$ one has the following.

\begin{prop} Let $2\leq p <  \infty$ and $\varphi\in \B^*$. 
\vskip.05in

$1^\circ$ If $\varphi\in \B^*(p)$, then $\varphi^*\in \B^*(p)$ and $\|\varphi^*\|_{\B^*(p)}=\|\varphi\|_{\B^*(p)}$. Thus, $\Re \varphi, \Im \varphi \in \B^*(p)$ 
and  $\|\Re \varphi \|_{\B^*(p)}$, $\|\Im \varphi\|_{\B^*(p)}\leq \|\varphi\|_{\B^*(p)}$.

\vskip.05in

$2^\circ$ If $\varphi\in \B^*(p)$,  then its normal and singular parts $($as functionals in $\B^*)$ $\varphi_{\text{\rm n}}, \varphi_{\text{\rm s}}$, belong to $\B^*(p)$, 
with $\|\varphi_{\text{\rm n}}\|_{\B^*(p)}$, $\|\varphi_{\text{\rm s}}\|_{\B^*(p)} \leq \|\varphi\|_{\B^*(p)}$. 
\end{prop}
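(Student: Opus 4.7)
For part $1^\circ$, my plan is to unfold the definition $\varphi^*(T) := \overline{\varphi(T^*)}$: if $x, y \in M$ satisfy $\|x\|_p, \|y\|_p \leq 1$ and $T \in (\B)_1$, then
$$|\varphi^*(xTy)| = |\varphi(y^* T^* x^*)|,$$
and since $T^* \in (\B)_1$ and the adjoint is an isometry on $L^pM$ (the trace being a trace), passing to the supremum yields $\|\varphi^*\|_{\B^*(p)} \leq \|\varphi\|_{\B^*(p)}$. The reverse inequality is immediate from $(\varphi^*)^* = \varphi$. The estimates for $\Re\varphi$ and $\Im\varphi$ follow by applying the triangle inequality to $\Re\varphi = (\varphi+\varphi^*)/2$ and $\Im\varphi = (\varphi-\varphi^*)/(2i)$.

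For part $2^\circ$, the key observation is that the $M$-bimodule action on $\B^*$ commutes with the normal/singular decomposition. For $x, y \in M$, the map $T \mapsto yTx$ is $\sigma$-weakly continuous on $\B$ and sends $\K(L^2M)$ to itself; consequently $x \cdot \varphi_n \cdot y$ is normal, while $x \cdot \varphi_s \cdot y$ vanishes on $\K(L^2M)$ and is therefore singular. Uniqueness of the normal/singular decomposition then yields
$$(x \cdot \varphi \cdot y)_n = x \cdot \varphi_n \cdot y \quad \text{and} \quad (x \cdot \varphi \cdot y)_s = x \cdot \varphi_s \cdot y.$$

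Combining this with the classical inequality $\|\psi_n\|_{\B^*} \leq \|\psi\|_{\B^*}$ (Takesaki) for every $\psi \in \B^*$, together with the elementary identity
$$\|\varphi_n\|_{\B^*(p)} = \sup\{\|x \cdot \varphi_n \cdot y\|_{\B^*} : x, y \in M, \ \|x\|_p, \|y\|_p \leq 1\}$$
(which follows from $\varphi_n(xTy) = (y \cdot \varphi_n \cdot x)(T)$ after relabeling), one obtains
$$\|\varphi_n\|_{\B^*(p)} = \sup \|(x \cdot \varphi \cdot y)_n\|_{\B^*} \leq \sup \|x \cdot \varphi \cdot y\|_{\B^*} = \|\varphi\|_{\B^*(p)},$$
and the same bound for $\varphi_s$. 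The main point to be handled carefully is the claim that $x \cdot \varphi_s \cdot y$ is genuinely singular; this rests on the characterization of singular functionals on $\B(L^2M)$ as precisely those vanishing on $\K(L^2M)$, a standard consequence of the identifications $\K(L^2M)^{*} = \B_{*}$ and $\K(L^2M)^{**} = \B$, but it warrants an explicit citation.
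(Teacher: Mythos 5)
Your argument is correct. Part $1^\circ$ is essentially the paper's proof (the paper simply declares $\|\varphi^*\|_{\B^*(p)}=\|\varphi\|_{\B^*(p)}$ obvious from the definitions; your computation with $(xTy)^*=y^*T^*x^*$ and the tracial identity $\|x^*\|_p=\|x\|_p$ is exactly what makes it obvious), and the passage to $\Re\varphi,\Im\varphi$ via the triangle inequality is the same.

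For part $2^\circ$ the overall strategy coincides with the paper's -- rewrite $\|\varphi\|_{\B^*(p)}$ as $\sup\{\|x\cdot\varphi\cdot y\|_{\B^*}:\|x\|_p,\|y\|_p\le 1\}$, show that the $M$-bimodule action intertwines the normal/singular decomposition, and then use that passing to the normal (or singular) part does not increase the $\B^*$-norm -- but you justify the key intertwining step differently. The paper takes the central projection $p_M\in\B^{**}$ with $\varphi_{\text{\rm n}}=p_M\cdot\varphi$, $\varphi_{\text{\rm s}}=(1-p_M)\cdot\varphi$, and notes that since $p_M$ is central it commutes with $M$, so $x\cdot\varphi_{\text{\rm n}}\cdot y=p_M(x\cdot\varphi\cdot y)$ and the norm bound is immediate; this one-line argument works verbatim for any von Neumann algebra in place of $\B(L^2M)$. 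You instead prove $(x\cdot\varphi\cdot y)_{\text{\rm n}}=x\cdot\varphi_{\text{\rm n}}\cdot y$ and $(x\cdot\varphi\cdot y)_{\text{\rm s}}=x\cdot\varphi_{\text{\rm s}}\cdot y$ by observing that $T\mapsto yTx$ is $\sigma$-weakly continuous and preserves $\K(L^2M)$, then invoke uniqueness of the decomposition together with the characterization of singular functionals on $\B(L^2M)$ as those annihilating $\K(L^2M)$ (equivalently, $\B(L^2M)^*=\B(L^2M)_*\oplus_1\K(L^2M)^{\perp}$, with the normal/singular splitting matching this direct sum). That characterization is specific to $\B(\mathcal{H})$ but is indeed a standard fact (e.g.\ Takesaki, vol.\ I), and you are right that it should be cited explicitly; with that citation your proof is complete. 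Both routes also need the classical estimate $\|\psi_{\text{\rm n}}\|_{\B^*}\le\|\psi\|_{\B^*}$ (in fact $\|\psi\|=\|\psi_{\text{\rm n}}\|+\|\psi_{\text{\rm s}}\|$), which you and the paper both use.
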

\begin{proof}
$1^{\circ}$
By the definitions, one obviously have $\|\varphi\|_{\B^*(p)} = \|\varphi^*\|_{\B^*(p)}$ for each $\varphi \in \B^*$. Thus,  $\varphi\in \B^*(p)$ implies $\varphi^* \in \B^*(p)$, 
and hence also the real and imaginary parts of any such $\varphi$, lie in $\B^*(p)$. The given upper bounds then follow from the triangle inequality.

$2^{\circ}$
Let $\varphi$ be any element of $\B^*(p)$, and let $\varphi_{\text{\rm n}}$ and $\varphi_{\text{\rm s}}$ be the normal and singular parts of $\varphi$ respectively. 
Recalling the construction of these functionals, let $p_M$ be the central projection in $\B^{**}$ such that $\varphi_{\text{\rm n}} = p_M \cdot \varphi$ and $\varphi_{\text{\rm s}} = (1-p_M) \cdot \varphi$. If $x$ and $y$ are any elements of $M$ such that $\|x\|_p, \|y\|_p \leq 1$,  then by using the fact that $p_M$ commutes with $M$ we get  
$$
x \cdot \varphi_{\text{\rm n}} \cdot y 
= x \cdot(p_M \cdot \varphi)\cdot y
=p_M( x \cdot \varphi\cdot y).
$$
If we then apply the usual norm from $\B^*$ we have that 
$$
\|x \cdot \varphi_{\text{\rm n}} \cdot y \|
=
\|p_M( x \cdot \varphi\cdot y) \|
\leq 
\|x \cdot \varphi\cdot y\|
\leq 
\|\varphi\|_{\B^*(p)}.
$$
Taking the supremum over all $x$ and $y$ with $\|x\|_p, \|y\|_p \leq 1$, gives us then that $\|\varphi_{\text{\rm n}}\|_{\B^*(p)} \leq \|\varphi\|_{\B^*(p)}$. The same argument with $1 - p_M$ shows that $\|\varphi_{\text{\rm s}}\|_{\B^*(p)} \leq \|\varphi\|_{\B^*(p)}$.

\end{proof}

\begin{cor} Let $2\leq p  < \infty$ and denote $\B^*_{\text{\rm n}}(p)=\{\varphi \in \B^*(p) \mid \varphi=\varphi_{\text{\rm n}}\}$.  
Then $\B^*_{\text{\rm n}}(p)$ is norm closed and $\sigma(\B^*, \B)$-dense in $(\B^*(p), \| \cdot \|_{\B^*(p)})$. 
\end{cor}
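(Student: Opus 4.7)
The plan is to decompose the statement into two separate assertions and dispatch each using the structure $\B^*_{\text{n}}(p) = \B^*(p) \cap \B_*$, where $\B_* \subset \B^*$ denotes the predual of $\B = \B(L^2M)$ (the space of normal functionals), which is a norm-closed subspace of $\B^*$.

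For norm-closedness of $\B^*_{\text{n}}(p)$ in $(\B^*(p), \|\cdot\|_{\B^*(p)})$, note that by Proposition~\ref{prop:3.2}$(2)$ the norm $\|\cdot\|_{\B^*(p)}$ dominates $\|\cdot\|_{\B^*}$. Hence any Cauchy sequence $(\varphi_n)$ in $\B^*_{\text{n}}(p)$ with respect to $\|\cdot\|_{\B^*(p)}$ is also Cauchy in $\|\cdot\|_{\B^*}$, its $\B^*$-limit lies in the $\|\cdot\|_{\B^*}$-closed subspace $\B_*$, and by completeness of $\B^*(p)$ (Proposition~\ref{prop:3.2}$(1)$) this limit also belongs to $\B^*(p)$. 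This disposes of the first assertion with essentially no work.

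For $\sigma(\B^*, \B)$-density, I would argue by Hahn--Banach duality. The key supply of normal functionals inside $\B^*(p)$ is furnished by Lemma~\ref{lemma:3.4}: for $q = 2p/(p-2)$, every rank-one functional $\omega_{\xi, \eta}$ with $\xi, \eta \in L^qM$ lies in $\B^*(p)$, and being a vector functional it is manifestly normal. In particular, since $\tau$ is finite we have $M \subseteq L^qM$ for every $q\in[2,\infty]$, and $M$ is $\|\cdot\|_2$-dense in $L^2M$; hence the family $\{\omega_{\xi, \eta} : \xi, \eta \in M\}$ already sits inside $\B^*_{\text{n}}(p)$ and separates points of $\B(L^2M)$ (indeed, $\langle T\xi,\eta\rangle = 0$ for all $\xi,\eta \in M$ forces $T=0$). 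By Hahn--Banach applied to the TVS $(\B^*, \sigma(\B^*, \B))$ whose continuous dual is $\B$, any subspace of $\B^*$ whose annihilator in $\B$ vanishes is $\sigma(\B^*, \B)$-dense in $\B^*$; thus $\B^*_{\text{n}}(p)$ is weak$^*$-dense in $\B^*$. Since the weak$^*$-topology on $\B^*(p)$ is just the restriction of $\sigma(\B^*,\B)$, density in $\B^*$ automatically yields density in $\B^*(p)$. I do not foresee any real obstacle here; the whole statement is a formal consequence of Proposition~\ref{prop:3.2} and Lemma~\ref{lemma:3.4}.
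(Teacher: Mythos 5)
Your proposal is correct and follows essentially the same route as the paper: norm-closedness from the completeness of $\B^*(p)$ together with the fact that $\|\cdot\|_{\B^*(p)}$ dominates $\|\cdot\|_{\B^*}$ and that normal functionals are norm-closed in $\B^*$, and weak$^*$-density from the vector functionals $\omega_{\xi,\eta}$ with $\xi,\eta\in \hat M$, which lie in $\B^*_{\text{\rm n}}(p)$ by Lemma~\ref{lemma:3.4}. The paper phrases the density step via norm-density of these functionals in $\B^*_{\text{\rm n}}$ plus weak$^*$-density of $\B^*_{\text{\rm n}}$ in $\B^*$, while you invoke point-separation and Hahn--Banach directly, but this is the same standard duality argument.
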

\begin{proof}
Take any $2\leq p \leq \infty$. Since the space $(\B^*_{\text{\rm n}})^{*}= \B$, the space is $B^*_{\text{\rm n}}$ is $\sigma(\B^*,\B)$ dense in $\B^*$. Moreover, 
the space $\mathcal L\subset \B^*_{\text{\rm n}}$ obtained as the 
span of functionals of the form $\omega_{\xi, \eta}$ with $\xi, \eta\in \hat{M} \subset L^2M$ is clearly dense in $\B^*_{\text{\rm n}}$ with respect to the usual norm in $\B^*$. 
Since $\mathcal L$ is contained in $\B^*(p)$, this implies that $\B^*_{\text{\rm n}}(p)$ is $\sigma(\B^*,\B)$ dense in $\B^*(p)$. 

Next, consider a Cauchy sequence $\{\varphi_n\}$ in $\B^*_{\text{\rm n}}(p)$. Since $\B^*(p)$ is complete, the sequence converges to some $\varphi \in \B^*(p)$. But for all $p$, the $\| \cdot \|_{\B^*(p)}$ norm dominates the usual norm of functionals in  $\B^*$. Thus, $\varphi$ is the usual norm limit in $\B^*$ of the normal functionals $\varphi_n$, and hence it is normal itself, $\varphi\in\B^*_{\text{\rm n}}$, showing that $\B^*_{\text{\rm n}}(p)$ is norm closed. 

\end{proof}

We end this section by noticing that the norm $\| \cdot\|_{\B^*(p)}$ on the $M$-bimodules $\B^*(p)$ satisfies an interesting property 
with respect to direct sums, which we will however not use in this paper. 

\begin{prop} Let $(M, \tau)$ be a tracial von Neumann algebras and $2\leq p < \infty$. 
Assume $\varphi_1, \varphi_2\in \B^*(p)$ are supported 
by mutually orthogonal projections in $\Z(M)$, i.e., there exist $z_1, z_2\in \mathcal{P}(\Z(M))$ such that $\varphi_i=\varphi_i(z_i \cdot z_i)$, $i=1,2$. 
Then for $p=2$ we have 
$\|\varphi_1 + \varphi_2\|_{\B^*(2)}=\max \{\| \varphi_1\|_{\B^*(2)}, \| \varphi_2\|_{\B^*(2)}\}$ and for $2<p < \infty$ we have 
$\|\varphi_1 + \varphi_2\|_{\B^*(p)}=(\| \varphi_1\|_{\B^*(p)}^q+ \| \varphi_1\|_{\B^*(p)}^q)^{1/q}$, where $q=\frac{p}{p-2}$. 
\end{prop}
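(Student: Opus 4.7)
The plan is to prove matching upper and lower bounds, both relying on centrality of the $z_i$ to make the relevant quantities decompose as $\ell^p$-sums. First I would note that since $z_i \in \Z(M)$ and $\varphi_i = \varphi_i(z_i \cdot z_i)$, for any $x,y\in M$ and $T\in \B$ one has
$$\varphi_i(xTy) = \varphi_i\bigl((xz_i)(z_iTz_i)(z_iy)\bigr),$$
so $|\varphi_i(xTy)| \le \|\varphi_i\|_{\B^*(p)} \|xz_i\|_p \|z_iy\|_p \|T\|$. Centrality also gives $|xz_i|^p = |x|^p z_i$, whence $\|xz_i\|_p^p = \tau(|x|^p z_i)$, and by orthogonality of $z_1,z_2$ one has $\|xz_1\|_p^p + \|xz_2\|_p^p \le \|x\|_p^p$.

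For the upper bound, take $x,y\in M$ with $\|x\|_p, \|y\|_p \le 1$ and $T \in (\B)_1$. Summing the bounds above yields
$$|(\varphi_1 + \varphi_2)(xTy)| \le \sum_{i=1}^{2} \|\varphi_i\|_{\B^*(p)}\, \|xz_i\|_p\, \|z_iy\|_p.$$
For $p=2$, Cauchy--Schwarz applied to the sequences $(\|xz_i\|_2)$ and $(\|z_iy\|_2)$ (each of $\ell^2$-norm $\le 1$) bounds this by $\max_i \|\varphi_i\|_{\B^*(2)}$. For $p>2$, the three-factor H\"older inequality with exponents $(q,p,p)$ satisfying $1/q + 2/p = 1$, i.e.\ $q = p/(p-2)$, gives the upper bound $\bigl(\sum_i \|\varphi_i\|_{\B^*(p)}^q\bigr)^{1/q}$.

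For the lower bound, fix $\varepsilon > 0$ and for each $i$ choose $x_i, y_i \in M$ of $\|\cdot\|_p$-norm one and $T_i \in \B$ of operator norm one with $|\varphi_i(x_i T_i y_i)| \ge \|\varphi_i\|_{\B^*(p)} - \varepsilon$. Replacing $x_i, y_i, T_i$ by $x_iz_i, z_iy_i, z_iT_iz_i$ leaves the value $\varphi_i(x_iT_iy_i)$ unchanged (by the identity above) and puts every factor under $z_i$. For scalars $\lambda_i \ge 0$ with $\sum_i \lambda_i^p \le 1$, set $x = \lambda_1 x_1 + \lambda_2 x_2$, $y = \lambda_1 y_1 + \lambda_2 y_2$, $T = T_1 + T_2$; orthogonality of the supports then gives $\|x\|_p, \|y\|_p \le 1$, $\|T\| \le 1$, and
$$(\varphi_1 + \varphi_2)(xTy) = \sum_{i=1}^{2} \lambda_i^2\, \varphi_i(x_i T_i y_i).$$
Maximizing $\sum_i \lambda_i^2 a_i$ with $a_i := \|\varphi_i\|_{\B^*(p)} - \varepsilon$ subject to $\sum_i \lambda_i^p \le 1$ is, via the substitution $\mu_i = \lambda_i^2$, the standard $\ell^{p/2}$--$\ell^q$ duality, yielding $\bigl(\sum_i a_i^q\bigr)^{1/q}$ for $p > 2$ and $\max_i a_i$ for $p = 2$; letting $\varepsilon \to 0$ completes the bound.

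The only real subtlety is recognizing that centrality turns $\|x\|_p^p$ into the sum $\sum_i \|xz_i\|_p^p$ via the functional-calculus identity $|xz_i|^p = |x|^p z_i$; once that is in hand, both directions reduce to routine H\"older/duality arguments, and no further operator-algebraic input is needed.
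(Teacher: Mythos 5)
Your argument is correct and follows essentially the same route as the paper's proof: decompose everything along the central supports $z_1, z_2$, reduce to a two-term scalar optimization, and conclude by discrete H\"older/$\ell^{p/2}$--$\ell^{q}$ duality (your two-sided presentation is in fact a bit more explicit than the paper's). The only small point to add is that in the lower bound you should first multiply each $T_i$ by a unimodular scalar so that the values $\varphi_i(x_iT_iy_i)$ are aligned (say, nonnegative) before forming the sum $\sum_i \lambda_i^2 \varphi_i(x_iT_iy_i)$; this is immediate and does not affect the norms involved.
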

\begin{proof}
Let $\varphi = \varphi_1 + \varphi_2$. By definition, $\|\varphi\|_{\B^*(p)}$ is the supremum of 
$|\varphi(xTy)|$ for $x, y \in M$ with $p$-norm at most 1 and $T \in \B(L^2M)$ with norm at most 1. Since $\varphi$ is supported on $z_1 + z_2$, we can restrict the values of $x$ and $y$ we take to only those in $M(z_1+z_2)$, and operators $T$ we take to those supported on $(z_1+z_2)L^2 M$. With this in mind, consider   such  a triple $x$, $y$, and $T$. We can decompose $x=x_1 + x_2$ where $x_1 = z_1 x z_1$ and $x_2 = z_2 x z_2$. Similarly, we can decompose $y = y_1 + y_2$ where $y_1$ and $y_2$ are defined in the same manner. We then define the operator
$$
T = \begin{pmatrix}
T_{11}   &   T_{12}   \\
T_{21}   &   T_{22}   \\
\end{pmatrix},
$$
where here $T_{ij} = z_i T z_j$.
Under this decomposition, we have 
$$|\varphi(xTy)|   = |\varphi_1(x_1 T_{11} y_1) + \varphi_2(x_2 T_{22} y_2)|.
$$

We now wish to maximize this quantity given $\|T\|\leq 1$ and $|x|_p, |y|_p\leq 1$. First, it is clear that it is optimal make the off diagonal terms of $T$ equal to 0, and have the diagonal terms $T_{11}$ and $T_{22}$ have norm 1. Next, we see by properties of the $p$-norm in $M$ that $|x_1|_p^p + |x_2|_p^p = |x|_p^p$ and $|y_1|_p^p + |y_2|_p^p = |y|_p^p$. While varying the $x_i , y_i,  T_{ii}$ under these constraints, we calculate the norm $\|\varphi\|_{\B^*(p)}$ to be the supremum of 
$$\alpha_1 \beta_1 \|\varphi_1\|_{\B^*(p)}
+\alpha_2 \beta_2 \|\varphi_1\|_{\B^*(p)}, 
$$
where the $\alpha_i$ and $\beta_i$ are in $[0,1]$ and satisfy $\alpha_1^p + \alpha_2^p =1 $ and $\beta_1^p + \beta_2^p = 1$. Now let $q$ be such that $1/q + 2/ p =1$, i.e.\ the H\"{o}lder conjugate of $p/2$. Then the discrete version of H\"{o}lder's inequality gives us 

\begin{equation}
\begin{split}
\alpha_1 \beta_1 \|\varphi_1\|_{\B^*(p)}
+\alpha_2 \beta_2 \|\varphi_1\|_{\B^*(p)}  
&\leq 
(\alpha_1^p + \alpha_2^p)^{1/p} (\beta_1^p + \beta_2^p)^{1/p}
( \|\varphi_1\|_{\B^*(p)}^q  +  \|\varphi_2\|_{\B^*(p)}^q )^{1/q} \\
&=
( \|\varphi_1\|_{\B^*(p)}^q  +  \|\varphi_2\|_{\B^*(p)}^q )^{1/q}.
\end{split}
\end{equation}
Moreover, equality is guaranteed to be achieved for some values of $\alpha_i$ and $\beta_i$. This gives us 
$$
\|\varphi\|_{\B^*(p)}
=
( \|\varphi_1\|_{\B^*(p)}^q  +  \|\varphi_2\|_{\B^*(p)}^q )^{1/q}.
$$
Raising both sides to the $q^{th}$ power then completes the proof.

\end{proof}

\section{The Banach bimodules $\B(p)$, $2\leq p < \infty$}

We now consider the natural preduals of the spaces $\B^*(p)$ introduced in the previous section. 

\begin{defn} Let $2\leq p < \infty$. For each $T\in \B=\B(L^2M)$,  denote $\vertiii{T}_p=\sup \{|\varphi(T)| \mid \varphi \in (\B^*(p))_1\}$. 
Noticing that $\vertiii{ \, \cdot \, }_p$ is a norm on $\B$, we denote by $\B(p)$ the completion of $\B$ in this  norm. 
\end{defn}

\begin{lemma}\label{lemma:4.2}  $1^\circ$ For each $T\in \B$, the norms $\vertiii{T}_p$ are increasing in $p$ and majorized by the operator  norm $\|T\|$, 
with $ \lim\limits_{p\to\infty} \vertiii{T}_p = \sup_p \vertiii{T}_p=\|T\|$.  

\vskip.05in

$2^\circ$  If $T\in \B$ and $x, y \in M$, then 
$$\vertiii{x T y}_p \leq \|x\|_p \|T\| \|y\|_p, \ \vertiii{xTy}_p \leq \|x\|  \vertiii{T}_p\|y\|, 
$$
$$
 \vertiii{x^{op}Ty^{op}}_p  \leq \|x^{op}\| \vertiii{T}_p \|y^{op}\|.
$$ 

\end{lemma}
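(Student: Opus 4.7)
The plan is to use duality throughout: everything reduces to manipulating the defining supremum $\vertiii{T}_p = \sup\{|\varphi(T)| \mid \varphi \in (\B^*(p))_1\}$ together with the comparison of $\|\cdot\|_{\B^*(p)}$-norms from Proposition~\ref{prop:3.2} and the bimodule bounds from Proposition~\ref{prop:3.3}. Both parts of the lemma are then routine.

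For $1^\circ$, monotonicity in $p$ follows from Proposition~\ref{prop:3.2}: if $2 \leq p' \leq p$, then $\|\varphi\|_{\B^*(p)} \leq \|\varphi\|_{\B^*(p')}$, so $(\B^*(p'))_1 \subset (\B^*(p))_1$ and therefore $\vertiii{T}_{p'} \leq \vertiii{T}_p$. The bound $\vertiii{T}_p \leq \|T\|$ comes from the same proposition, since $\|\varphi\|_{\B^*} \leq \|\varphi\|_{\B^*(p)}$ gives $(\B^*(p))_1 \subset (\B^*)_1$, combined with the standard identity $\|T\| = \sup_{\varphi \in (\B^*)_1} |\varphi(T)|$. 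For the limit $\vertiii{T}_p \to \|T\|$, I would fix $\varepsilon > 0$ and use density of $\hat{M}$ in $L^2M$ to pick $\xi, \eta \in M$ with $\|\xi\|_2 = \|\eta\|_2 = 1$ and $|\langle T\xi, \eta \rangle| > \|T\| - \varepsilon$. Lemma~\ref{lemma:3.4} gives $\|\omega_{\xi,\eta}\|_{\B^*(p)} = \|\xi\|_q \|\eta\|_q$ where $q = 2p/(p-2)$; since $\xi, \eta \in M$ are bounded and $q \to 2^+$ as $p \to \infty$, continuity of $r \mapsto \|\xi\|_r$ at $r = 2$ (which follows from dominated convergence in the spectral calculus for $|\xi|$) yields $\|\xi\|_q \|\eta\|_q \to 1$. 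Normalizing $\omega_{\xi,\eta}$ to lie in $(\B^*(p))_1$ and passing to the limit gives $\liminf_p \vertiii{T}_p \geq |\langle T\xi, \eta \rangle| > \|T\| - \varepsilon$, and letting $\varepsilon \to 0$ yields the limit.

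For $2^\circ$, the first inequality $\vertiii{xTy}_p \leq \|x\|_p \|T\| \|y\|_p$ falls straight out of the definition of $\|\varphi\|_{\B^*(p)}$: after rescaling $x, y, T$ by their respective norms, one obtains $|\varphi(xTy)| \leq \|x\|_p \|y\|_p \|T\| \cdot \|\varphi\|_{\B^*(p)}$ for any $\varphi \in \B^*(p)$, and taking $\varphi \in (\B^*(p))_1$ and passing to the supremum finishes the job. The second inequality uses the identity $\varphi(xTy) = (y \cdot \varphi \cdot x)(T)$ together with the $M$-bimodule bound $\|y \cdot \varphi \cdot x\|_{\B^*(p)} \leq \|x\|\|y\|\|\varphi\|_{\B^*(p)}$ from Proposition~\ref{prop:3.3}, combined with the observation that $|\psi(T)| \leq \|\psi\|_{\B^*(p)} \vertiii{T}_p$ for any $\psi \in \B^*(p)$ (obtained by rescaling $\psi$ into the unit ball). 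The $M^{op}$ version is proved identically, using the corresponding $M^{op}$-bimodule estimate from Proposition~\ref{prop:3.3}.

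The only step requiring real care is the limit computation in $1^\circ$: the key point is to select the test vectors $\xi, \eta$ inside $M$, not merely $L^2M$, so that the $L^q$-norms appearing in Lemma~\ref{lemma:3.4} are finite for every $q > 2$ and converge down to $\|\xi\|_2, \|\eta\|_2 = 1$ as $q \to 2^+$. Everything else is a mechanical application of the definitions and of Propositions~\ref{prop:3.2} and~\ref{prop:3.3}.
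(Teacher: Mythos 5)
Your proposal is correct and follows essentially the same route as the paper: monotonicity and the bound by $\|T\|$ via the nesting of the unit balls $(\B^*(p))_1$, the limit $\vertiii{T}_p \to \|T\|$ via vector functionals $\omega_{\hat x,\hat y}$ with $x,y\in M$ together with Lemma~\ref{lemma:3.4} and the convergence $\|x\|_q\to\|x\|_2$ as $q\to 2^+$, and part $2^\circ$ by rescaling in the definition of $\|\cdot\|_{\B^*(p)}$ and by the identity $\varphi(xTy)=(y\cdot\varphi\cdot x)(T)$ combined with Proposition~\ref{prop:3.3}. The only difference is cosmetic: you pick a near-maximizing pair $\xi,\eta$ for a fixed $\varepsilon$, whereas the paper bounds $|\langle T\hat x,\hat y\rangle|$ for all $x,y$ and then takes the supremum.
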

\begin{proof}
$1^{\circ}$
Take $2\leq p \leq p'<\infty$ and  $T \in \B$. Since $(\B^*(p))_1 \subset (\B^*(p'))_1 $ we have that 
\begin{equation}
\begin{split}
\vertiii{T}_p 
&=  
\sup \{ |\varphi(T)| \text{ }\mid \varphi\in (\B^*(p))_1\}
\\
&\leq  
\sup \{ |\varphi(T)| \text{ }\mid \varphi\in (\B^*(p'))_1\}
\\
&=
\vertiii{T}_{p'}.
\end{split}
\end{equation}
So the norms $\vertiii{ \, \cdot \, }_p$ are increasing as $p$ increases. For any finite $2\leq p<\infty$, we also have a bound $\vertiii{T}_p \leq \|T\|$, since the unit ball $(\B^*(p))_1$ is a subset of $(\B^*)_1$. So it follows that $\vertiii{T}_p$ converges as $p$ tends to infinity. 

To find the limit of these norms, take $x$ and $y$ any elements of $M$. Let $\hat{x}$ and $\hat{y}$ be the associated elements of $L^2M$. By Lemma~\ref{lemma:3.4} we have
$$|\langle T \hat{x}, \hat{y} \rangle |
=
|\omega_{\hat{x}, \hat{y}} (T) |
\leq 
\vertiii{T}_p \|\omega_{\xi, \eta}\|_{\B^*(p)}= \vertiii{T}_p \|x\|_q \|y\|_q,
$$
where $q= \frac{2p}{p-2}$. Letting $p$ tend to infinity gives us the bound  
$$|\langle T \hat{x}, \hat{y} \rangle |
\leq 
\|x\|_2 \|y\|_2 \lim_{p \to \infty}   \vertiii{T}_p.
$$
If we take the supremum over all $x, y \in M$  with $\|x\|_2, \|y\|_2 \leq 1$, we get that $\|T\| \leq \lim\limits_{p \to \infty}   \vertiii{T}_p$. The result then follows.

$2^{\circ}$
First consider when $2\leq p < \infty$. Fix $x,y \in M$ and $T\in \B$. If $\varphi$ is an element of $\B^*(p)$, then we have a bound
\begin{equation}
\begin{split}
|\varphi( xTy )|
&=
\|x\|_p \|y\|_p \|T\| \cdot \left|\varphi\left( \frac{x}{\|x\|_p}	\frac{T}{\|T\|}	\frac{y}{\|y\|_p}		\right)\right|
\\
&\leq
\|x\|_p \|y\|_p \|T\| \cdot \|\varphi\|_{\B^*(p)}.
\end{split}
\end{equation}
If we take the supremum over all $\varphi$ with $\|\varphi\|_{\B^*(p)} \leq 1$, this gives the 
$$
\vertiii{xTy }_p  
=
 \sup_{\|\varphi\|_{\B^*(p)} \leq 1 } 
|\varphi( xTy )|
\leq 
\|x\|_p \|T\| \|y\|_p, 
$$
which is the first desired inequality. On the other hand,  one could also note that 
$$
\vertiii{xTy }_p  
=
 \sup_{\|\varphi\|_{\B^*(p)} \leq 1 } 
|\varphi( xTy )|
=
 \sup_{\|\varphi\|_{\B^*(p)} \leq 1 } 
\left|\left(y\cdot \varphi\cdot x\right)( T )\right|.
$$
From Proposition~\ref{prop:3.3}, we know that $\|y\cdot \varphi\cdot x\|_{\B^*(p)} \leq \|x\| \|y\| \|\varphi\|_{\B^*(p)}$. Thus, it follows that 
$$
\vertiii{xTy }_p  
\leq 
 \sup_{\|\varphi\|_{\B^*(p)} \leq \|x\| \|y\| } 
|\varphi( T )|
=
\|x\| \vertiii{T}_p \|y\|.
$$
This gives the second desired inequality.
The case when $x$ and $y$ are elements of $M^{op}$ follows by the exact same reasoning. 
\end{proof}

\begin{prop}
Let $q = \frac{2p}{p-2}$ as before, and let $q' = \frac{2p}{p+2}$ be the H\"{o}lder conjugate of $q$. If $T\in \B(L^2M)$ satisfies $\vertiii{T}_p\leq 1$, then $T$ takes the unit ball of $L^qM$ into 
the unit ball of $L^{q'}M$, thus defining an element $\tilde{T}\in (\B(L^qM, L^{q'}M))_1$. The map $T \mapsto \tilde{T}$ extends 
uniquely to a contractive linear map  from $\B(p)$  into $\B(L^qM , L^{q'} M)$, which is injective when restricted to $\B(L^2M)$.
\end{prop}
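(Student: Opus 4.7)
The plan is to read off the desired inequality $\|T\xi\|_{q'}\leq \vertiii{T}_p\,\|\xi\|_q$ directly from the duality formula of Lemma~\ref{lemma:3.4}, then use the contractivity of $T\mapsto\tilde T$ together with completeness to extend, and finally exploit the density of $M\subset L^qM$ in $L^2M$ for injectivity. I would first observe that since $q=\frac{2p}{p-2}\geq 2$ for $p\geq 2$, we have the inclusions $L^qM\subset L^2M\subset L^{q'}M$, so for any $\xi\in L^qM$ the vector $T\xi$ is a priori a well-defined element of $L^2M$, hence of $L^{q'}M$. The nontrivial point is the correct norm bound.

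For step one, fix $T\in\B$ and $\xi\in L^qM$. For every $\eta\in L^qM$, Lemma~\ref{lemma:3.4} gives $\omega_{\xi,\eta}\in\B^*(p)$ with $\|\omega_{\xi,\eta}\|_{\B^*(p)}=\|\xi\|_q\|\eta\|_q$, so that
\[
|\langle T\xi,\eta\rangle_{L^2M}| = |\omega_{\xi,\eta}(T)|\leq \vertiii{T}_p\,\|\xi\|_q\|\eta\|_q.
\]
Using the cyclicity of $\tau$ and $\|\eta^*\|_q=\|\eta\|_q$, one rewrites this as $|\tau((T\xi)\eta)|\leq \vertiii{T}_p\,\|\xi\|_q\,\|\eta\|_q$ for all $\eta\in L^qM$. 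Taking the supremum over $\eta$ in the unit ball of $L^qM$ and invoking the duality $(L^qM)^*\simeq L^{q'}M$ gives $\|T\xi\|_{q'}\leq \vertiii{T}_p\,\|\xi\|_q$. This simultaneously proves $T\xi\in L^{q'}M$ and shows that the map $\tilde T:L^qM\to L^{q'}M$ is a bounded operator with $\|\tilde T\|_{\B(L^qM,L^{q'}M)}\leq \vertiii{T}_p$; in particular when $\vertiii{T}_p\leq 1$ it sends the unit ball of $L^qM$ into the unit ball of $L^{q'}M$.

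For the extension, the assignment $T\mapsto\tilde T$ is clearly linear on $\B$, and the bound $\|\tilde T\|\leq \vertiii{T}_p$ means it is a contraction from $(\B,\vertiii{\,\cdot\,}_p)$ into the Banach space $(\B(L^qM,L^{q'}M),\|\cdot\|)$. Since $\B(p)$ is by definition the completion of $\B$ in $\vertiii{\,\cdot\,}_p$, the contraction extends uniquely to a contractive linear map $\B(p)\to\B(L^qM,L^{q'}M)$ by the standard bounded linear extension theorem.

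For injectivity on $\B(L^2M)$, suppose $T\in\B$ with $\tilde T=0$, so $T\xi=0$ in $L^{q'}M$ for every $\xi\in L^qM$. Since $L^2M$ embeds injectively into $L^{q'}M$ (as $q'\leq 2$) and $T\xi\in L^2M$ already, this forces $T\xi=0$ in $L^2M$ for every $\xi\in L^qM$, and in particular for every $\xi\in M$. Because $M$ is $\|\cdot\|_2$-dense in $L^2M$ and $T$ is bounded on $L^2M$, we conclude $T=0$. The main subtlety in the argument is ensuring at step one that the functional $\eta\mapsto\tau((T\xi)\eta)$ correctly identifies the $L^{q'}$-norm of $T\xi$; this is the place where the Hölder conjugacy of $q$ and $q'$ and Lemma~\ref{lemma:3.4} combine most cleanly, and once it is established the remaining steps are formalities.
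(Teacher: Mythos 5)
Your argument is correct and follows essentially the same route as the paper: the bound $|\langle T\xi,\eta\rangle|\leq \vertiii{T}_p\|\xi\|_q\|\eta\|_q$ from Lemma~\ref{lemma:3.4} combined with the $L^q$--$L^{q'}$ duality (noncommutative H\"older) gives $\|\tilde{T}\|_{L^qM\to L^{q'}M}\leq \vertiii{T}_p$, and the extension to $\B(p)$ is by density of $\B(L^2M)$ in the completion. Your explicit injectivity argument (testing $\tilde T$ on $\xi\in M$ and using $\|\cdot\|_2$-density of $M$ in $L^2M$) is a welcome detail that the paper's proof leaves implicit.
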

\begin{proof}
Noticing that for any $2 \leq p \leq \infty$ one has $q \leq 2 \leq q'$, if $T\in \B(L^2M)$, then for any vector $\xi \in  L^q M \subset L^2 M$ we have 
$$
\| T \xi \|_{q'}
\leq 
\| T \xi \|_{2}
\leq 
\|T\| \|  \xi \|_{2}
\leq 
\|T\| \|  \xi \|_{q}.
$$
Hence, $T$ restricts to a bounded operator $\tilde{T}\in \B(L^qM , L^{q'} M)$.  Moreover, we notice by Lemma~\ref{lemma:3.4}  that if $\xi, \eta$ are vectors in $L^{q}M$, then
$$
|\langle 
T \xi, \eta
\rangle|
=
|\omega_{\xi, \eta} (T)|
\leq
\|\omega_{\xi,\eta}\|_{\B^*(p)} \vertiii{T}_p 
=
\|\xi\|_q \|\eta\|_q \vertiii{T}_p. 
$$
Thus, the bilinear form $u: L^2 M \times L^2 M \to \complex$ given by $u(\xi, \eta) = \langle T \xi, \eta\rangle$ restricts to a  bilinear form on $ L^q M \times L^q M$ with norm at most $\vertiii{T}_p$.  But notice by the noncommutative version of H\"{o}lder's inequality  
$$\sup_{\|\xi\|_q , \| \eta\|_q \leq 1} |\langle 
T \xi, \eta
\rangle|
=
\| T\|_{L^q M \to L^{q'} M}
$$
where here, this norm represents the operator norm in $\B(L^qM , L^{q'} M)$. Thus we conclude that $\| \tilde{T}\|_{L^q M \to L^{q'} M} \leq \vertiii{T}_p$. By the $\vertiii{ \, \cdot \, }_p$-density of $\B(L^2M)$ in $\B(p)$, 
it follows that the map $T \mapsto \tilde{T}$ extends uniquely to a contractive linear map on all $\B(p)$. 

\end{proof}

\begin{prop}\label{prop:4.4}

\vskip.05in 
$1^\circ$ The restriction of the norm $\vertiii{ \, \cdot \, }_p$ to $M\subset \B$ is equal to the norm $\| \cdot \|_{p/2}$ for $L^{\frac{p}{2}}M$. 

\vskip.05in 
$2^\circ$ If $M$ is assumed to be a factor, then the restriction of the norm $\vertiii{ \, \cdot \, }_p$ to $M^{op}\subset \B$ is equal to the operator norm $\|\cdot \|$ on $M^{op}$.

\vskip.05in 
$3^\circ$ If $M^{op}$ is viewed as a subset of $\B(L^qM , L^{q'} M)$,  then the restriction of the norm $\| \cdot \|_{L^qM \to L^{q'}M}$ to $M^{op}$ is equal to the norm $\| \cdot \|_{p/2}$.

\end{prop}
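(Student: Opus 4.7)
This proposition consists of three independent norm computations; in each, the plan is to establish a simple upper bound directly from the definition (and Lemma~\ref{lemma:4.2}) and then produce a near-extremal test functional $\varphi\in\B^*(p)$ for the matching lower bound. Throughout I use that $p/2$ and $q/2$ are H\"{o}lder-conjugate (since $2/p+2/q=1$, equivalently $q=\frac{2p}{p-2}$), non-commutative H\"{o}lder for $L^{p/2}M$ and $L^{q/2}M$, polar decomposition in $L^{q/2}M$, and Lemma~\ref{lemma:3.4} identifying $\|\omega_{\xi,\eta}\|_{\B^*(p)}$ with $\|\xi\|_q\|\eta\|_q$.

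For $1^\circ$, given $a\in M$ I would write $a=(u|a|^{1/2})\cdot 1\cdot|a|^{1/2}$ with $u$ the partial isometry from the polar decomposition of $a$; both factors lie in $M$ with $\|\cdot\|_p$-norm equal to $\|a\|_{p/2}^{1/2}$, so directly from the definition of $\B^*(p)$ one gets $|\varphi(a)|\le\|\varphi\|_{\B^*(p)}\,\|a\|_{p/2}$ for all $\varphi$, yielding $\vertiii{a}_p\le\|a\|_{p/2}$. For the reverse inequality, by $L^{p/2}$--$L^{q/2}$ duality select $b\in L^{q/2}M$ with $\|b\|_{q/2}\le 1$ and $\tau(ab)$ close to $\|a\|_{p/2}$, polar-decompose $b=u|b|$, and set $\xi=u|b|^{1/2}$, $\eta=|b|^{1/2}$, so that $\xi\eta^*=b$. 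Then $\omega_{\xi,\eta}(a)=\tau(a\xi\eta^*)=\tau(ab)$, while Lemma~\ref{lemma:3.4} gives $\|\omega_{\xi,\eta}\|_{\B^*(p)}=\|\xi\|_q\|\eta\|_q\le\|b\|_{q/2}\le 1$. Part $3^\circ$ follows the parallel scheme: the upper bound $\|a^{op}\|_{L^qM\to L^{q'}M}\le\|a\|_{p/2}$ is non-commutative H\"{o}lder with $1/q+2/p=1/q'$, and the reverse uses the same choice of $\xi,\eta$ combined with $L^q$--$L^{q'}$ duality, since $\|\xi a\|_{q'}\ge|\tau(\eta^*\xi a)|=|\tau(ba)|$.

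For $2^\circ$ the factor hypothesis is essential---it fails in the abelian case, where part $1^\circ$ already forces $\vertiii{a^{op}}_p=\|a\|_{p/2}$---and rank-one test functionals alone cannot detect more than $\|a\|_{p/2}$. The idea is to use a higher-rank $\varphi$ obtained by spreading a small spectral projection of $aa^*$ across a partition of unity in $M$. Fix $\epsilon>0$ and $a\in M$ with $\|a\|=1$; let $e$ be the spectral projection of $aa^*$ for $[(1-\epsilon)^2,1]$, choose $n\ge 1/\tau(e)$, a subprojection $e'\le e$ with $\tau(e')=1/n$, and (using the II$_1$ factor hypothesis to realize the required Murray--von Neumann comparison) partial isometries $v_1,\dots,v_n\in M$ with $v_i^*v_i=e'$ and $\sum_{i=1}^n v_iv_i^*=1$. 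Set $\eta_i:=v_ia/\|v_ia\|_2$ and $\varphi:=\sum_i\omega_{v_i,\eta_i}$; a direct calculation using $e'aa^*e'\ge(1-\epsilon)^2 e'$ gives $\|v_ia\|_2\ge(1-\epsilon)/\sqrt{n}$ and hence $\varphi(a^{op})=\sum_i\|v_ia\|_2\ge(1-\epsilon)\sqrt{n}$, while the operator bound $\sum_i v_iaa^*v_i^*\le\sum_i v_iv_i^*=1$ yields $\|\sum_i\eta_i\eta_i^*\|_{q/2}\le n/(1-\epsilon)^2$, so the mixed-rank estimate described below gives $\|\varphi\|_{\B^*(p)}\le\sqrt{n}/(1-\epsilon)$. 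Taking the ratio produces $\vertiii{a^{op}}_p\ge(1-\epsilon)^2$; letting $\epsilon\to 0$ and combining with the upper bound $\vertiii{a^{op}}_p\le\|a^{op}\|$ from Lemma~\ref{lemma:4.2} finishes the proof.

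The main obstacle is the mixed-rank bound $\|\sum_i\omega_{\xi_i,\eta_i}\|_{\B^*(p)}\le\|\sum_i\xi_i\xi_i^*\|_{q/2}^{1/2}\|\sum_i\eta_i\eta_i^*\|_{q/2}^{1/2}$, since Lemma~\ref{lemma:3.5} handles only the diagonal case $\xi_i=\eta_i$. I plan to derive it by Cauchy--Schwarz in $\ell^2$ applied to $|\varphi(xTy)|\le\sum_i\|Ty\xi_i\|_2\,\|x^*\eta_i\|_2$, followed by the $L^{p/2}$--$L^{q/2}$ H\"{o}lder inequality for the two sums $\sum_i\|y\xi_i\|_2^2=\tau(y^*y\sum_i\xi_i\xi_i^*)$ and $\sum_i\|x^*\eta_i\|_2^2=\tau(xx^*\sum_i\eta_i\eta_i^*)$. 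A secondary technicality is the factor-theoretic spreading step, which requires both a subprojection $e'\le e$ of trace exactly $1/n$ and a decomposition of $1$ into $n$ orthogonal projections each equivalent to $e'$; both are standard in II$_1$ factors.
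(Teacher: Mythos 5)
Your proposal is correct, and on parts $1^\circ$ and $3^\circ$ it is essentially the paper's argument: the upper bound in $1^\circ$ via $a=(u|a|^{1/2})\cdot 1\cdot|a|^{1/2}$ is identical, and your explicit rank-one functionals are just the constructive form of the $L^{p/2}$--$L^{q/2}$ duality the paper invokes (the paper obtains the lower bound as $\sup|\langle a\xi,\eta\rangle|$ over $L^q$-unit balls via Lemma~\ref{lemma:3.4}, and disposes of $3^\circ$ as sharpness of H\"{o}lder). One small slip in your $3^\circ$: with $\xi=u|b|^{1/2}$, $\eta=|b|^{1/2}$ one has $\eta^*\xi=|b|^{1/2}u|b|^{1/2}\neq b$, so $\tau(\eta^*\xi a)\neq\tau(ba)$ in general; take instead $\xi=|b|^{1/2}$ and test $\xi a$ against $|b|^{1/2}u^*$ (so the product of the two $L^q$-contractions is exactly $b$), which is what you intend and costs nothing. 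Where you genuinely diverge is $2^\circ$. The paper takes $\varphi(T)=\frac1n\sum_i\langle T(u_im),u_im\rangle$ for a self-adjoint $m$ and unitaries $u_i$, notes $\varphi(x^{op})=\tau(xm^2)$ is independent of the $u_i$, computes $\|\varphi\|_{\B^*(p)}=\|\frac1n\sum_iu_im^2u_i^*\|_{q/2}$ exactly by Lemma~\ref{lemma:3.5}, and uses Dixmier's averaging theorem (this is where factoriality enters) to drive that norm to $\tau(m^2)$; a final reduction via Proposition~\ref{prop:3.3} handles non-positive $x$. You instead spread a spectral projection $e'$ of $aa^*$ by partial isometries $v_i$ with $v_i^*v_i=e'$, $\sum_iv_iv_i^*=1$ (factoriality entering through comparison of projections), and your computation checks out: $\varphi(a^{op})=\sum_i\|v_ia\|_2\ge(1-\epsilon)\sqrt n$, while the mixed-rank estimate $\|\sum_i\omega_{\xi_i,\eta_i}\|_{\B^*(p)}\le\|\sum_i\xi_i\xi_i^*\|_{q/2}^{1/2}\|\sum_i\eta_i\eta_i^*\|_{q/2}^{1/2}$ does follow from exactly the Cauchy--Schwarz-plus-H\"{o}lder argument you describe (it is the upper-bound half of the proof of Lemma~\ref{lemma:3.5}, kept in its off-diagonal form), giving $\|\varphi\|_{\B^*(p)}\le\sqrt n/(1-\epsilon)$ and hence $\vertiii{a^{op}}_p\ge(1-\epsilon)^2\|a\|$. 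Your route avoids Dixmier averaging and treats non-positive $a$ directly, at the cost of proving the extra mixed estimate and using comparison theory; the paper's route buys an exact norm formula and applies verbatim to any factor, whereas your trace-exactly-$1/n$ subprojection uses diffuseness, so for the remaining (finite-dimensional) factor case allowed by the hypothesis you should note the one-line adjustment of taking $e'$ minimal and $n$ the matrix size.
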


\begin{proof} 

$1^{\circ}$
Fix an element of $x \in M$. Let $x = u |x|$ be the polar decomposition of $x$. Let $1_\B$ be identity operator in $\B$.  Then for any $\varphi \in \B^*(p)$
\begin{equation}
\begin{split}
|\varphi(x)| 
&= 
|\varphi(u |x|^{1/2} 1_{\B} |x|^{1/2}) |
\\
&\leq
\|u |x|^{1/2}\|_p \cdot \| |x|^{1/2}\|_p \cdot \|1_{\B}\| \cdot\|  \varphi\|_{\B^*(p)}
\\
&=
\| |x|^{1/2}\|_p^2 \cdot \|  \varphi\|_{\B^*(p)}
\\
&=
\|x\|_{p/2} \cdot \|  \varphi\|_{\B^*(p)}.
\\
\end{split}
\end{equation}
 Taking the supremum over all $\varphi$ in $(\B^*(p))_1)$ gives the inequality $\vertiii{x}_p \leq \|x\|_{p/2}$.
 
Now we prove the reverse inequality. Let $q = \frac{2p}{p-2}$, as  in Lemma~\ref{lemma:3.4}. Then note that if $\xi$ and $\eta$ are vectors in $L^2M$ such that $\|\xi\|_q =\|\eta\|_q = 1$, Lemma~\ref{lemma:3.4} implies that $|\langle x \xi, \eta \rangle| \leq \vertiii{x}_p$. Thus, we have that

\begin{equation}
\begin{split}
\vertiii{x}_p \geq 
\sup_{\|\xi\|_q =\|\eta\|_q = 1} 
|\langle x \xi, \eta \rangle|.
\end{split}
\end{equation}
If we choose $q'$  such that $\frac{1}{q} + \frac{1}{q'} =1$ and choose $r$ such that $\frac{1}{r} + \frac{1}{q} =\frac{1}{q'}$, then by duality we have
$$
\sup_{\|\xi\|_q =\|\eta\|_q = 1} 
|\langle x \xi, \eta \rangle|
=
\sup_{\|\xi\|_q =1} 
\| x \xi \|_{q'}
=
\|x\|_r,
$$
so we have a bound $\|x\|_r \leq \vertiii{x}_p$. Now by our chosen definition of $r$ we check that 
$$
\frac{1}{r}
=
\frac{1}{q'} - \frac{1}{q}
=
1-\frac{2}{q}
=
\frac{2}{p}.
$$
So indeed, we have $r = p/2$, and the reverse inequality $\|x\|_{p/2} \leq \vertiii{x}_p$ holds. This completes the proof.

$2^{\circ}$
Assume that $M$ is a factor, and take an element $x^{op} \in M^{op}$. By Lemma~\ref{lemma:4.2}, we already know that $\vertiii{x^{op} }_p \leq \| x^{op}\|$, so it suffices to check that $\vertiii{x^{op} }_p \geq \| x^{op}\|$. To do this, we will construct a family of functionals $\varphi \in \B^*(p)$ such that $|\varphi(x^{op})|/\|\varphi\|_{\B^*(p)}$ can come arbitrarily close to $\|x^{op}\|$. From here the result will follow since $\vertiii{x^{op}}_p \geq |\varphi(x^{op})|/\|\varphi\|_{\B^*(p)}$ for all $\varphi \in \B^*(p)$

With this in mind, let's say we choose a self adjoint element $m  \in M$ and a finite list of unitaries $u_1, u_2, \dots, u_n \in M$. Then we can define a linear functional $\varphi \in \B^*(p)$ by 
$$\varphi(T ) 
=
\frac{1}{n}\sum_{i=1}^n 
\langle T(u_im ), u_im \rangle.
$$
By Lemma~\ref{lemma:3.5}, we know that 
$$\|\varphi\|_{\B^*(p)} = 
\left|\left|
\frac{1}{n}\sum_{i=1}^n 
(u_i m) (u_im)^* 
\right|\right|_{q/2}
 = 
\left|\left|
 \frac{1}{n} 
\sum_{i=1}^n 
u_i m^2 u_i^* 
\right|\right|_{q/2}.
$$
If we apply this $\varphi$ to $x^{op}$, we get
$$
\varphi(x^{op})
=
\frac{1}{n}\sum_{i=1}^n 
\langle x^{op}(u_im), u_im \rangle
=
\frac{1}{n}\sum_{i=1}^n 
\langle u_imx, u_im \rangle.
$$
Using that this inner product comes from the trace $\tau$, we can simplify this to be 
$$
\varphi(x^{op})
=
\frac{1}{n}\sum_{i=1}^n
\tau(
 u_imx (u_im)^* )
 =
\frac{1}{n}\sum_{i=1}^n
\tau(
 x m^2 )
  =
\langle
 x ,m^2 
 \rangle.
$$

Now, using that $\vertiii{x}_p \geq |\varphi(x^{op})|/\|\varphi\|_{\B^*(p)}$, we get the following lower bound
$$
\vertiii{x^{op}}_p
\geq 
|\langle x, m^2 \rangle |
\left|\left|
 \frac{1}{n} 
\sum_{i=1}^n 
u_i m^2 u_i ^*
\right|\right|_{q/2}^{-1}.
$$
Note that this holds for any self adjoint $m \in M$ and any choice of unitaries $u_1, u_2, \cdots , u_n$. But by Diximier's averaging property, we know the $\| \cdot \|$-norm closure of the convex hull of the set $\{u m^2 u^* : u \in \U(M)\}$ intersects the center of $M$. In this case, since $M$ was assumed to be a factor, the center of $M$ is trivial. In particular, since the trace of any element in $\{u m^2 u^* : u \in \U(M)\}$ is $\tau(m^2)$, it follows that $\tau(m^2)$ is in the $\| \cdot \|$-norm closure of the convex hull of this set. Now the operator norm $\|\cdot \|$ majorizes the norm $\|\cdot\|_{q/2}$ norm, so the same averaging result is true in the space $L^{q/2} M$. It follows then from the above lower bound that 
$$
\vertiii{x^{op}}_p
\geq 
|\langle x, m^2 \rangle |
\frac{1}{\tau(m^2)}
=
|\langle x, \frac{m^2}{\tau(m^2)} \rangle |,
$$
where $m^2$ can be an arbitrary positive element of $M$. Thus, we conclude that 
$$
\vertiii{x^{op}}_p
\geq 
\sup_{\substack{ m \geq 0,\text{ } \|m\|_1 \leq 1}} |\langle x , m\rangle|,
$$
where here this supremum runs over all positive $m \in M$ with $\|m\|_1 = \tau(m) \leq 1$.

Now if $x$ was assumed to be positive, duality would force this supremum to be $\|x\|$, which would give us the desired reverse inequality  $\|x^{op}\| \leq \vertiii{x^{op}}_p$. In general, we can write $x^{op} = u^{op} |x^{op}| $ to be the polar decomposition of $x^{op}$. Note by part $1^{\circ}$ of Proposition~\ref{prop:3.3}, that the map $T \mapsto u^{op} T$ is an isometry on $\B(p)$ with respect to the $\vertiii{\cdot}_p$-norm. In particular, it follows from the positive case that  
$$
\vertiii{x^{op}}_p
=
\vertiii{\, |x^{op}| \, }_p
\geq
\| \, |x^{op}| \, \|
=
\|x^{op}\|,
$$
so the reverse inequality  holds for a general $x^{op}$, which completes the proof of the first claim.

$3^\circ$ 
We see by H\"{o}lder's inequality that $\|x^{op}\|_{L^qM \to L^{q'}M}$ is equal to $\|x^{op}\|_r$, where $r$ is the solution to the equation $q^{-1} + r^{-1} = (q')^{-1}$. Using the definition of $q$ and $q'$, one gets $r = p/2$, as desired. 

\end{proof}

\begin{cor} If $M$ is a $\text{\rm II}_1$ factor, then the map $\B(p) \ni T \mapsto \tilde{T}\in \B(L^qM, L^{q'}M)$ is not a homeomorphism of Banach spaces. 

\end{cor}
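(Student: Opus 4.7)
The plan is to prove that the two norms $\vertiii{\,\cdot\,}_p$ and $\|\cdot\|_{L^qM\to L^{q'}M}$ are not equivalent on the dense subspace $\B(L^2M)\subset \B(p)$, which will force the contractive extension $T\mapsto \tilde T$ to fail to be a homeomorphism. Indeed, if the map were a homeomorphism onto its image, the open mapping theorem would give a constant $C>0$ with $\vertiii{T}_p \leq C\|\tilde T\|_{L^qM\to L^{q'}M}$ on all of $\B(p)$, hence on the dense subspace $\B(L^2M)$; so it suffices to produce a sequence of operators in $\B(L^2M)$ that violates every such uniform estimate.

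The test operators are taken from the subalgebra $M^{op}\subset \B(L^2M)$, on which both norms have already been computed in Proposition~\ref{prop:4.4}. For $T=x^{op}\in M^{op}$, part $2^\circ$ (which is precisely where the factor hypothesis is used) gives $\vertiii{T}_p = \|x^{op}\|$, while part $3^\circ$ gives $\|\tilde T\|_{L^qM\to L^{q'}M} = \|x^{op}\|_{p/2}$. These two norms on $M^{op}$ behave in qualitatively opposite ways: the operator norm is insensitive to the trace, whereas the norm $\|\cdot\|_{p/2}$ collapses on projections of small trace. They will therefore certainly fail to be equivalent once projections of arbitrarily small trace are available.

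This is where the II$_1$ factor hypothesis genuinely enters: I pick a sequence of nonzero projections $e_n\in M$ with $\tau(e_n)\to 0$ and set $T_n:=e_n^{op}$. The formulas above then immediately yield
\[
\vertiii{T_n}_p = \|e_n^{op}\| = 1 \quad \text{for all }n, \qquad \|\tilde T_n\|_{L^qM\to L^{q'}M} = \|e_n^{op}\|_{p/2} = \tau(e_n)^{2/p}\to 0,
\]
ruling out any uniform estimate of the desired form. I do not anticipate any real obstacle: the technical content has already been absorbed into Proposition~\ref{prop:4.4}, so the corollary amounts to reading off the right pair of identities and choosing a test sequence made possible by $M$ being II$_1$.
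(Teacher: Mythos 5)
Your proposal is correct and follows essentially the same route as the paper: it tests the two norms on elements of $M^{op}$ using Proposition~\ref{prop:4.4}.$2^\circ$ and $3^\circ$, with your projections $e_n$ of small trace being a concrete instance of the paper's choice of $x_n\in (M)_1$ with $\|x_n\|=1$ and $\|x_n\|_{p/2}\to 0$.
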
 
\begin{proof} Let $x_n \in (M)_1$ be so that $\|x_n\|=1$ but $\|x_n\|_{p/2} \rightarrow 0$. If we take $T_n=x_n^{op}$, then by Proposition~\ref{prop:4.4}.2$^\circ$ 
we have $\vertiii{T_n}_p = \|T_n\|=\|x_n\|=1$, while 
by~\ref{prop:4.4}.3$^\circ$ we have $\|\tilde{T_n}\|_{L^qM \rightarrow L^{q'}M}=\|x_n\|_{p/2} \rightarrow 0.$ 

\end{proof}

The next result, which is crucial in proving Theorem~\ref{thrm:6.5} later in this paper (Theorem~\ref{thrm:1.1} in the introduction) should be compared to \cite{O10} and Proposition 3.1 in \cite{DKEP22} where similar decompositions are considered. The previous corollary shows however that the proof strategy employed there will not apply to our current situation.

\begin{lemma}\label{lemma:4.6}
For any $T \in \B(L^2 M)$ 
$$\vertiii{T}_p  = \inf\{ \|a\|_p \|S\| \|b\|_p: a,b \in M,S\in \B(L^2M), aSb = T
\}.
$$
\end{lemma}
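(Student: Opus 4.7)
The direction $\vertiii{T}_p \le \|a\|_p\|S\|\|b\|_p$ for every factorization $T = aSb$ is immediate from Lemma~\ref{lemma:4.2}: the bound $\vertiii{xUy}_p \le \|x\|_p\|U\|\|y\|_p$ proved there yields $\vertiii{T}_p \le \|a\|_p\|S\|\|b\|_p$, and taking the infimum gives one direction.

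For the reverse, I would run a Hahn--Banach / bipolar argument. Set $F := \{aSb : a, b \in M,\; S \in \B,\; \|a\|_p \le 1,\; \|b\|_p \le 1,\; \|S\| \le 1\} \subset \B$. By the very definition of $\|\cdot\|_{\B^*(p)}$, the polar of $F$ in the dual pair $(\B, \B^*)$ satisfies $F^\circ = (\B^*(p))_1$; hence, by the definition of $\vertiii{\cdot}_p$, the bipolar $F^{\circ\circ}$ equals the closed unit ball $\{T \in \B : \vertiii{T}_p \le 1\}$. By the bipolar theorem, this coincides with the $\sigma(\B, \B^*)$-closed absolutely convex hull of $F$. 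What remains is to verify that every element of this closed hull is, up to an arbitrarily small $\varepsilon > 0$, itself a single product $aSb$ with $\|a\|_p \|S\| \|b\|_p \le 1 + \varepsilon$.

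The main tool is a matrix amplification. Given $T_i = a_i S_i b_i \in F$ and scalars $c_i = |c_i|e^{i\theta_i}$ with $\sum_i |c_i| \le 1$, assemble the row $a := \sum_i |c_i|^{1/2} a_i \otimes e_{1i}$, the block-diagonal $S := \bigoplus_i S_i$, and the column $b := \sum_i e^{i\theta_i}|c_i|^{1/2} b_i \otimes e_{i1}$. Reading the composition
\[
L^2 M \xrightarrow{b} L^2 M \otimes \ell^2_n \xrightarrow{S} L^2 M \otimes \ell^2_n \xrightarrow{a} L^2 M
\]
gives an operator in $\B$ equal to $\sum_i c_i a_i S_i b_i$. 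The identity $aa^* = \sum_i |c_i|\, a_i a_i^*$ (from the orthogonality $e_{1i} e_{1j}^* = \delta_{ij} e_{11}$), combined with the triangle inequality in $L^{p/2}(M, \tau)$, yields $\|a\|_p^2 \le \sum_i |c_i|\,\|a_i\|_p^2 \le 1$; symmetrically $\|b\|_p \le 1$, and $\|S\| = \max_i \|S_i\| \le 1$.

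The principal obstacle is that these amplified factors are not yet honest elements of $M$ and $\B(L^2 M)$: the row $a$ and column $b$ naturally live in $M_{1,n}(M)$ and $M_{n,1}(M)$, while $S$ lives in $M_n(\B)$. I would resolve this by absorbing the auxiliary $\ell^2_n$ into $L^2 M$, replacing the matrix units $e_{1i}$ by isometric embeddings coming from a partition of $1$ into projections inside $M$ (available when $M$ is diffuse, with the atomic/trivial cases handled directly), and then leveraging the $\sigma(\B, \B^*)$-closure step from the bipolar theorem together with Banach--Alaoglu compactness on $\|\cdot\|_p$-bounded subsets of $M$ to pass to a genuine factorization of the weak limit. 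This produces, for any $T$ with $\vertiii{T}_p \le 1$ and any $\varepsilon > 0$, a factorization $T = aSb$ with $a, b \in M$, $S \in \B$, and $\|a\|_p \|S\| \|b\|_p \le 1 + \varepsilon$, which upon rescaling gives the reverse inequality and completes the proof.
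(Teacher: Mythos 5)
Your easy direction, the identification $F^{\circ}=(\B^*(p))_1$ and $F^{\circ\circ}=\{T\in\B:\vertiii{T}_p\le 1\}$, and the row--diagonal--column amplification with $aa^*=\sum_i|c_i|\,a_ia_i^*$ are all correct, and that amplification is exactly the convexity computation the paper runs for the sets $\C_\alpha=\{aSb:\ a,b\in M,\ S\in\B,\ \|a\|_p\|S\|\|b\|_p\le\alpha\}$. The first genuine gap is in how you propose to turn the amplified factorization back into an honest one. Replacing the matrix units by ``isometric embeddings coming from a partition of $1$ into projections inside $M$'' does not work: a finite von Neumann algebra contains no isometries with proper range projection, and if you instead use isometries $w_i\in\B(L^2M)$ with orthogonal ranges, the row $\sum_i|c_i|^{1/2}a_iw_i^*$ is no longer an element of $M$, so you have not produced $aSb$ with $a,b\in M$; the diffuseness caveat is also foreign to the statement, which holds for every tracial $M$. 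The missing idea, which is the paper's, is polar decomposition: since the row $A$ satisfies $AA^*=\sum_i|c_i|\,a_ia_i^*\in M$, write $A=a\,u$ with $a=(AA^*)^{1/2}\in M$ positive and $u$ a partial isometry from $\bigoplus_iL^2M$ to $L^2M$; do the same on the column, and absorb the two partial isometries into the middle block-diagonal operator, which then lies in $\B(L^2M)$ with norm $\le\max_i\|S_i\|$. This yields convexity of $\C_\alpha$ with no extra hypotheses.

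The second, more serious gap is the closure step. Invoking ``Banach--Alaoglu compactness on $\|\cdot\|_p$-bounded subsets of $M$'' to get ``a genuine factorization of the weak limit'' is not a valid argument: $\|\cdot\|_p$-bounded subsets of $M$ are not relatively compact in $M$ (their weak $L^p$-limits are in general unbounded, hence leave $M$), and even if $a_\iota\to a$, $b_\iota\to b$, $S_\iota\to S$ in the respective weak topologies, the products $a_\iota S_\iota b_\iota$ need not converge to $aSb$, since multiplication is not jointly continuous in these topologies; note also that the statement is an infimum that need not be attained, so no limit argument should be expected to produce an exact factorization with product norm $\le 1$. What is actually needed is only an $\varepsilon$-perturbation argument, and here convexity does all the work: since the absolutely convex hull of $F$ is convex, its $\sigma(\B,\B^*)$-closure coincides with its norm closure, so given $\vertiii{T}_p\le 1$ one finds $X$ in the hull (hence $X\in\C_1$ by the polar-decomposition step) with $R:=T-X$ small; $R$ factors trivially as $1\cdot R\cdot 1$ with $\|1\|_p\|R\|\,\|1\|_p=\|R\|$, and then $T=X+R\in\C_1+\C_\varepsilon\subset\C_{1+\varepsilon}$, again by the two-block amplification. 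This is in substance how the paper concludes, separating $T$ from $\C_\alpha$ by a functional in $\B^*(p)$ and identifying the closure of $\C_\alpha$ with respect to the associated gauge with $\bigcap_{\beta>\alpha}\C_\beta$; no weak limits of the factors $a,S,b$ are ever taken.
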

\begin{proof}
Using Lemma~\ref{lemma:4.2}, we have that for any decomposition $T = aSb$ with $a,b \in M$ and  $S \in \B(L^2M)$ that $\vertiii{T}_p\leq \|a\|_p \|S\| \|b\|_p$. This shows at the very least that $\vertiii{T}_p$ is smaller than this infimum. 

To obtain the reverse inequality, we use a convexity argument. To this end, for the remainder of the proof we consider the following subsets of $\B(L^2M)$. For any positive number $\alpha$, let $\C_{\alpha}$ be the set of operators $T \in \B(L^2M)$ such that we can find a decomposition $T = aSb$ with $a,b \in M$ and  $S \in \B(L^2M)$ such that $\|a\|_p \|S\| \|b\|_p \leq \alpha$. We claim first that  $\C_\alpha$ is convex.

For let's say we have operators $T_1,T_2 \in \C_\alpha$. Then by definition, we can find decompositions $T_1 = a_1 S_1 b_1$ and $T_2 = a_2 S_2 b_2$ with the $a_i, b_i \in M$ and the $S_i \in \B(L^2 M)$ such that $\|a_i\| \|S_i\| \|b_i\|_p \leq \alpha$. After rescaling, we may assume without loss of generality that $\|S_i\| = 1$ and $\|a_i\|_p = \|b_i\|_p \leq \alpha^{1/2}$. For any $\lambda \in(0,1)$, we can form the decomposition $\lambda T_1 +(1-\lambda)T_2 = aSb$ as follows. First, factor $\lambda T_1 +(1-\lambda)T_2$ as a chain of operators through $L^2M \oplus L^2M$ by noting 
$$\lambda a_1S_1b_1 + (1-\lambda)a_2S_2b_2 
=
\begin{pmatrix}
\lambda^{1/2} a_1 & (1-\lambda)^{1/2}a_2
\end{pmatrix}
\begin{pmatrix}
S_1 & 0 \\
0 & S_2
\end{pmatrix}
\begin{pmatrix}
\lambda^{1/2}b_1 \\
(1-\lambda)^{1/2} b_2
\end{pmatrix}.
$$
Let $au$ be the polar decomposition of $(\lambda^{1/2} a_1 \text{ } (1-\lambda)^{1/2}a_2)$, where $a\in M$ is positive, and let $ub$ be the polar decomposition of ${\lambda^{1/2}b_1 \choose (1-\lambda)^{1/2}b_2}$, where here $b \in M$ is positive. If we call $S \in \B(L^2M)$ the product 
$$
S = 
u \begin{pmatrix}
S_1 & 0 \\
0 & S_2
\end{pmatrix}
v.
$$
We arrive at a decomposition $T_1 + T_2 = aSb$. 

Now we can calculate that 
$$aa^* = \lambda a_1a_1^*  + (1-\lambda) a_2a_2^*.$$ 
So that 
\begin{equation}
\begin{split}
\|a\|_p^2 = \|aa^*\|_{p/2} 
&\leq \lambda\|a_1a_1^* \|_{p/2}+(1-\lambda)\|a_2a_2^* \|_{p/2}
\\
&= \lambda\|a_1\|_p^2+(1-\lambda)\|a_2\|_p^2
\\
&\leq \alpha.
\end{split}
\end{equation}
By the same logic we also have 
$\|b\|_p^2 \leq \alpha
$.
Lastly, we see by inspection that $\|S\| = \max\{\|S_1\|,\|S_2\| \} = 1$. Putting this together, we have then
\begin{equation}
\begin{split}
\|a\|_p \|S\| \|b\|_p
&\leq 
\alpha
\\
\end{split}
\end{equation}
It follows then that $\lambda T_1 +(1-\lambda)T_2 \in \C_\alpha$, so $\C_\alpha$ is convex. 

Next, let's say we have an operator $T \in \B(L^2M)$ with $\vertiii{T}_p = \alpha$. We claim that $T$ is in the $\vertiii{\cdot}_p$ norm closure of $\C_\alpha$. For otherwise, since $\C_\alpha$ is convex, we can find using Hahn-Banach a functional $\varphi \in \B^*(p)$ with $\|\varphi\|_{\B^*(p)} = 1$ such that 
$$\text{Re}(\varphi(T)) 
> 
\sup_{S \in \C_\alpha} \text{Re}(\varphi(S)).
$$
Now, by definition we have 
$$\text{Re}(\varphi(T))  \leq \vertiii{T}_p = \alpha.$$
Moreover, since $\|\varphi\|_{\B^*(p)}$ was equal to the supremum of all $|\varphi(aSb)|$ where $\|a\|_p, \|b\|_p \leq 1$ and $\|S\| \leq 1$, it follows that 
$$\sup_{S \in \C_\alpha} \text{Re}(\varphi(S)) = \alpha.$$
 But this  leads to a contradiction, so we must have $T \in \overline{\C_{\alpha}}^{\vertiii{\cdot}_p}$.
 
On the other hand, we claim that the closure of $\C_\alpha$ in the $\sigma(\B(L^2M), \B^*(p))$ topology is $\bigcap_{\beta> \alpha} \C_\beta$. For consider  $$p(T) = \inf\{\beta: T \in \C_\beta\}$$
 the seminorm corresponding to the convex sets $\C_\beta$. Note that a linear functional $\varphi$ on $\B(L^2M)$ will be bounded with respect to the seminorm $p$ if and only if $\varphi$ is bounded on any set $\C_\beta$. But as we observed already $\sup_{S \in \C_{\beta}} |\varphi(S)|= \beta \|\varphi\|_{\B^*(p)}$, so the only such functional are in $\B^*(p)$. It follows then that the $\sigma(\B(L^2M), \B^*(p))$ topology and the weak topology for $(\B(L^2M), p)$ coincide. Hence, since $\C_\alpha$ was convex, the closure of $\C_\alpha$ in the $\sigma(\B(L^2M), \B^*(p))$ topology will be the same as the closure of $\C_\alpha$ with respect to the seminorm $p$, which is indeed $\bigcap_{\beta> \alpha} \C_\beta$.

To complete the proof, we notice by convexity that the $\vertiii{ \, \cdot \, }_p$ norm closure and the $\sigma(\B(L^2M), \B^*(p))$ closure of $\C_\alpha$ are the same. Hence, for any $T \in \B(L^2M)$ such that $\vertiii{T}_p = \alpha$ we have 
$$T \in 
\overline{\C_{\alpha}}^{\vertiii{\cdot}_p}
=
\overline{\C_{\alpha}}^{\sigma(\B(L^2M), \B^*(p))}
=
\bigcap_{\beta> \alpha} \C_\beta.
$$
It follows the that we can find decomposition $T = aSb$ with $\|a\|_p\|S\| \|b\|_p$ arbitrarily close to $\alpha$, and the lemma follows immediately.

\end{proof}

\begin{lemma}\label{lemma:4.7}
Assume $\{u_n\}_n\subset \U(M)$ is a sequence of unitary elements in $M$ with $\tau(u_n^*u_m)=0$ for all $n\neq m$. For each $T\in \B(L^2M)$ 
let $\E_0(T) \in \B$ be the operator that acts as $0$ on $
\mathcal{H}_0^\perp$, where $\mathcal{H}_0=\overline{sp} (\{\hat{u}_n\}_n)\subset L^2M$, and as the diagonal operator 
that takes $\hat{u}_n$ to $\langle T(\hat{u}_n), \hat{u}_n \rangle \hat{u}_n$. Then $\vertiii{\E_0(T)}_p \leq \vertiii{T}_p$, for all $2\leq p \leq \infty$. 
Moreover, if $T\in \B(L^2M)$ is diagonal with respect to $\{\hat{u}_n\}_n$, i.e., $\E_0(T)=T$, then $\vertiii{T}_p$ is equal to the operator norm of $T$ in $\B(L^2M)$, $\forall p$. 
\end{lemma}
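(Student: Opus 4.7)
I would prove the two claims in reverse order, because the first follows cleanly from the second via a single application of the vector functional $\omega_{\hat u_n,\hat u_n}$.

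First, for the ``moreover'' part, suppose $T$ is diagonal with respect to $\{\hat u_n\}$, so that $T\hat u_n=\lambda_n\hat u_n$ and $T$ vanishes on $\mathcal{H}_0^\perp$; then $\|T\|=\sup_n|\lambda_n|$. The upper bound $\vertiii{T}_p\le\|T\|$ is immediate from Lemma~\ref{lemma:4.2}.$1^\circ$. For the lower bound, the key input is Lemma~\ref{lemma:3.4}: since each $u_n$ is a unitary in $M$, we have $\|u_n\|_q=\tau(|u_n|^q)^{1/q}=1$ for every $1\le q\le\infty$ (with $q=\tfrac{2p}{p-2}$ in the relevant range), so $\|\omega_{\hat u_n,\hat u_n}\|_{\B^*(p)}=\|u_n\|_q^2=1$. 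By the very definition of $\vertiii{\,\cdot\,}_p$ this yields
\[
\vertiii{T}_p\;\ge\;|\omega_{\hat u_n,\hat u_n}(T)|\;=\;|\langle T\hat u_n,\hat u_n\rangle|\;=\;|\lambda_n|,
\]
and supping over $n$ gives $\vertiii{T}_p\ge\|T\|$.

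For the main inequality $\vertiii{\E_0(T)}_p\le\vertiii{T}_p$, note first that the orthonormality of $\{\hat u_n\}$ (a consequence of the hypothesis $\tau(u_n^*u_m)=\delta_{nm}$) makes $\E_0(T)$ a well-defined bounded operator with $\|\E_0(T)\|=\sup_n|\langle T\hat u_n,\hat u_n\rangle|\le\|T\|$, and by construction $\E_0(\E_0(T))=\E_0(T)$, so $\E_0(T)$ is diagonal in the sense of the second part. Applying what was just proved to $\E_0(T)$ in place of $T$ therefore gives
\[
\vertiii{\E_0(T)}_p\;=\;\|\E_0(T)\|\;=\;\sup_n|\langle T\hat u_n,\hat u_n\rangle|\;=\;\sup_n|\omega_{\hat u_n,\hat u_n}(T)|.
\]
Each term on the right is bounded by $\|\omega_{\hat u_n,\hat u_n}\|_{\B^*(p)}\cdot\vertiii{T}_p=\vertiii{T}_p$, which completes the proof for $2\le p<\infty$; the case $p=\infty$ is trivial since $\vertiii{\,\cdot\,}_\infty=\|\,\cdot\,\|$ by Lemma~\ref{lemma:4.2}.$1^\circ$.

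There is really no serious obstacle here once one reverses the order of the two assertions. The temptation is to attempt to realize $\E_0$ as an average of conjugations by unitaries in $M$ or $M^{op}$ in order to invoke the bimodule bound of Lemma~\ref{lemma:4.2}.$2^\circ$ directly, but such ``diagonal'' unitaries in general lie neither in $M$ nor in $M^{op}$; the decomposition approach via Lemma~\ref{lemma:4.6} runs into similar trouble. The clean route is instead to observe that Lemma~\ref{lemma:3.4} gives tight control of the vector states $\omega_{\hat u_n,\hat u_n}$ precisely because unitaries have all $L^q$-norms equal to $1$, and this single observation suffices to read off both parts of the statement.
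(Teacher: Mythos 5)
Your proof is correct and follows essentially the same route as the paper: the ``moreover'' part is obtained by combining the bound $\vertiii{T}_p\le\|T\|$ with the fact (Lemma~\ref{lemma:3.4}) that $\|\omega_{\hat u_n,\hat u_n}\|_{\B^*(p)}=\|u_n\|_q^2=1$, and the general inequality then follows by applying the diagonal case to $\E_0(T)$ together with $|\langle T\hat u_n,\hat u_n\rangle|\le\vertiii{T}_p$. The only difference is organizational (you state the reduction to the diagonal case explicitly), which if anything clarifies the paper's argument.
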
 
\begin{proof} By Proposition~\ref{prop:3.2}, one has  $\vertiii{T}_p \leq \|T\|$, $\forall T\in \B(L^2M)$. If in addition $T$ is diagonal with respect to $\{\hat{u}_n\}_n$ and equal to $0$ on $\mathcal H_0^\perp$, 
then $\|T\|=\sup_n |\langle T(\hat{u}_n), \hat{u}_n\rangle |$.  But by Lemma~\ref{lemma:3.4} and the definition of $\vertiii{T}_p$, the right hand side is larger than or equal to $\vertiii{T}_p$, showing that  
$\vertiii{T}_p \geq \|T\|$ as well, so altogether $\vertiii{T}_p=\|T\|$.  

For an arbitrary $T\in \B(L^2M)$, by the definition of $\vertiii{T}_p$ and Lemma~\ref{lemma:3.4} one has $\vertiii{T}_p\geq |\langle T(\hat{u}_n), \hat{u}_n \rangle|$, $\forall n$. 
Thus, $\vertiii{T}_p \geq \sup_n |\langle T(\hat{u}_n), \hat{u}_n\rangle |=\|\mathcal E_0(T)\|=\vertiii{\mathcal E_0(T)}_p$.  

\end{proof}

\begin{cor} If $M=L\Gamma$ and we denote $\D=\ell^\infty\Gamma \subset \B(\ell^2\Gamma)=\B(L^2M)$, $\C=c_0(\Gamma) \subset \ell^\infty\Gamma$, 
then for each $T\in \D$ we have $\vertiii{T}_p=\|T\|$, $\forall p\geq 2$. Thus, $\C \subset \D$  are $\vertiii{ \, \cdot \, }_p$-closed in $\B(p)$. 

\end{cor}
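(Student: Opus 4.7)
The plan is to apply Lemma~\ref{lemma:4.7} directly to the canonical system of unitaries $\{u_g\}_{g\in\Gamma}\subset L\Gamma=M$. First I would note that by definition of the regular representation, $\tau(u_g^*u_h)=\delta_{g,h}$, so the vectors $\{\hat{u}_g\}_{g\in\Gamma}$ are pairwise orthogonal and span all of $L^2M=\ell^2\Gamma$; in fact, under the natural identification $\hat{u}_g\leftrightarrow \delta_g$, they form the standard orthonormal basis of $\ell^2\Gamma$. Hence the subspace $\mathcal{H}_0$ in Lemma~\ref{lemma:4.7} is the full Hilbert space, and any $T\in\D=\ell^\infty\Gamma$ is exactly the diagonal operator with eigenvalues $T(g)=\langle T\hat{u}_g,\hat{u}_g\rangle$, i.e.\ $\mathcal{E}_0(T)=T$.

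The second assertion of Lemma~\ref{lemma:4.7} then gives $\vertiii{T}_p=\|T\|$ for every $T\in\D$ and every $2\leq p<\infty$. One mild caveat is that Lemma~\ref{lemma:4.7} is phrased with a countable sequence $\{u_n\}$, but its proof only invokes Lemma~\ref{lemma:3.4} pointwise on individual basis vectors, so the inequality $\vertiii{T}_p\geq \sup_g|\langle T\hat{u}_g,\hat{u}_g\rangle|=\|T\|$ (for diagonal $T$) applies verbatim with $g$ ranging over an arbitrary index set $\Gamma$. Together with the always-valid bound $\vertiii{T}_p\leq\|T\|$ from Lemma~\ref{lemma:4.2}.$1^\circ$, this settles the norm identity.

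For the closedness assertion, the coincidence of $\vertiii{\,\cdot\,}_p$ with the operator norm on $\D$ means that $\D$ is already complete with respect to $\vertiii{\,\cdot\,}_p$: any $\vertiii{\,\cdot\,}_p$-Cauchy sequence in $\D$ is $\|\cdot\|$-Cauchy, so it converges in $\|\cdot\|$-norm to an element of $\D$ (which is norm-closed as a von Neumann subalgebra of $\B$), and this limit is also its $\vertiii{\,\cdot\,}_p$-limit in $\B(p)$. The same reasoning applies to $\C=c_0(\Gamma)$, which is norm-closed inside $\D$ and therefore $\vertiii{\,\cdot\,}_p$-closed inside $\B(p)$.

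I do not anticipate any genuine obstacle, since the corollary is essentially a transcription of Lemma~\ref{lemma:4.7} into the group-algebra setting, combined with the general fact that a subspace on which two norms agree inherits completeness from one to the other. The only point requiring a brief justification is the extension of Lemma~\ref{lemma:4.7} from sequences to the possibly uncountable index set $\Gamma$, which as noted above is immediate from the pointwise nature of its proof.
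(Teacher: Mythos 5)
Your argument is correct and is essentially the paper's own proof: apply Lemma~\ref{lemma:4.7} to the canonical orthonormal basis $\{\hat{u}_g\}_{g\in\Gamma}$ of $\ell^2\Gamma$ to get $\vertiii{T}_p=\|T\|$ on $\D$, with the closedness of $\C\subset\D$ then following from the coincidence of the two norms. The paper leaves the closedness step and the countability caveat implicit, but your handling of both is sound.
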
 

\begin{proof} Since for $M=L\Gamma$ we have $L^2M=\ell^2\Gamma$, with $\{\hat{u_g}\}_g$ as orthonormal basis, the previous lemma 
implies that $\vertiii{ \, \cdot \, }_p$ restricted to $\D=\ell^\infty\Gamma$ coincided with the operator norm.

\end{proof}

\section{The Banach bimodules $\K(p)$, $2\leq p < \infty$ }

Since the ideal of compact operators $\K(L^2M)$ is a Banach bimodule over both $M, M^{op}$, its $\vertiii{ \, \cdot \, }_p$-completions, $2\leq p < \infty$,  
give rise to a one parameter family of bimodules that we now consider. 

\begin{defn}
 For each $2\leq p < \infty$,  we denote by $\K(p)$ 
the closure of $\K=\K(L^2M)$ in $\B(p)$. 
\end{defn}

\begin{lemma}\label{lemma:5.2} Let $2\leq p < \infty$ and denote $q=\frac{2p}{p-2}$ and $q'=\frac{2p}{p+2}$ as before. 
Following Proposition~\ref{prop:4.4}, for each $K\in \B(p) $ we denote by $\tilde{K}$ the element it induces  in $ \B(L^qM, L^{q'}M)$. 
\vskip.05in
$1^\circ$ If $K \in \K(p)$, then  $\tilde{K}$  takes the unit ball $(L^qM)_1$ into a $\| \cdot \|_{q'}$-compact subset of $L^{q'}M$. 
\vskip.05in 

$2^\circ$ If $K\in \K(p)$, then for any sequence of unitary elements $\{u_n\}_n \subset M$ that converges weakly to $0$, one has $\|\tilde{K}(\hat{u}_n)\|_{q'}\rightarrow 0$. 
\end{lemma}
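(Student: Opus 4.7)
The plan is to prove both parts by first handling compact operators $K_0\in \K(L^2M)$ directly and then extending to $K\in \K(p)$ by density, exploiting the contractive map $K\mapsto \tilde K$ from $\B(p)$ into $\B(L^qM,L^{q'}M)$ established just before. The key preliminary observation is that for $p\geq 2$ one has $q\geq 2\geq q'$, and hence norm inequalities $\|\cdot\|_{q'}\leq \|\cdot\|_2\leq \|\cdot\|_q$ on $M$, giving continuous inclusions $L^qM\hookrightarrow L^2M\hookrightarrow L^{q'}M$ with $(L^qM)_1\subset (L^2M)_1$.

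For $1^\circ$, I would first note that if $K_0\in \K(L^2M)$, then $K_0((L^2M)_1)$ is $\|\cdot\|_2$-relatively compact in $L^2M$, and continuity of $L^2M\hookrightarrow L^{q'}M$ then makes it $\|\cdot\|_{q'}$-relatively compact in $L^{q'}M$. Restricting to the subset $(L^qM)_1$ shows that $\tilde{K_0}((L^qM)_1)$ is $\|\cdot\|_{q'}$-relatively compact, so $\tilde{K_0}$ is a compact operator $L^qM\to L^{q'}M$. For general $K\in \K(p)$, approximate by $K_n\in \K(L^2M)$ in the $\vertiii{\,\cdot\,}_p$-norm; by contractivity of $K\mapsto \tilde K$, the sequence $\tilde{K_n}$ converges to $\tilde K$ in $\|\cdot\|_{L^qM\to L^{q'}M}$, and since compact operators form a norm-closed subspace of $\B(L^qM,L^{q'}M)$, $\tilde K$ is also compact.

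For $2^\circ$, weak convergence $u_n\to 0$ in $M$ yields weak convergence $\hat{u}_n=u_n\hat{1}\to 0$ in $L^2M$; compact operators map weakly convergent sequences to norm-convergent ones, so $\|K_0\hat{u}_n\|_2\to 0$ for any $K_0\in \K(L^2M)$, whence $\|\tilde{K_0}\hat{u}_n\|_{q'}\to 0$ since $\|\cdot\|_{q'}\leq \|\cdot\|_2$. For general $K\in \K(p)$ and $\varepsilon>0$, I would choose $K_0\in \K(L^2M)$ with $\|\tilde K-\tilde{K_0}\|_{L^qM\to L^{q'}M}<\varepsilon$; since $\|\hat{u}_n\|_q=\tau(|u_n|^q)^{1/q}=1$ for all $n$, one has $\|\tilde K\hat{u}_n-\tilde{K_0}\hat{u}_n\|_{q'}<\varepsilon$, and combined with $\tilde{K_0}\hat{u}_n\to 0$ this gives $\limsup_n\|\tilde K\hat{u}_n\|_{q'}\leq \varepsilon$, true for every $\varepsilon>0$.

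The main obstacle, modest as it is, lies in the initial verification in $1^\circ$ that $K_0\in \K(L^2M)$ restricts to an operator that is genuinely \emph{compact} from $L^qM$ into $L^{q'}M$; this hinges on both inclusions $(L^qM)_1\subset (L^2M)_1$ and $L^2M\hookrightarrow L^{q'}M$ being available, which are precisely what the constraints $q\geq 2\geq q'$ provide. Everything else is routine: closure of compact operators under operator-norm limits, automatic norm-convergence of compact operators on weakly convergent sequences, and uniform control of $\|\hat{u}_n\|_q$ for unitaries.
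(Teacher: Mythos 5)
Your proof is correct and follows essentially the same route as the paper: approximate $K$ in the $\vertiii{\,\cdot\,}_p$-norm by elements of $\K(L^2M)$, use contractivity of $T\mapsto \tilde T$ together with norm-closedness of $\K(L^qM,L^{q'}M)$ for $1^\circ$, and an $\varepsilon/2$-argument using $\|\hat u_n\|_q=1$ for $2^\circ$. The only (harmless) difference is that the paper approximates by finite-rank operators, for which compactness of $\tilde K_n$ is immediate, whereas you use general compact approximants and supply the short extra verification, via $(L^qM)_1\subset (L^2M)_1$ and $L^2M\hookrightarrow L^{q'}M$, that a compact operator on $L^2M$ induces a compact operator from $L^qM$ to $L^{q'}M$.
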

\begin{proof}
$1^{\circ}$ Note first that if $K \in \K(p) $ is a finite-rank operator, then $\tilde{K}$ is in $\K(L^qM, L^{q'}M)$. If $K$ is a general element of $\K(p)$, then we can find a sequence $(K_n)$ of finite-rank operators in $\B(p)$ such that $\K_n\to K$ with respect to the $\vertiii{ \, \cdot \, }_p$ norm. Since the mapping $K \mapsto \tilde{K}$ is contractive, 
we have $\tilde{K_n} \to \tilde{K}$ in $\B(L^qM, L^{q'}M)$. Since the space $\K(L^qM, L^{q'}M)$ is closed in $\B(L^qM, L^{q'}M)$, 
and each finite-rank $K_n$ was in $\K(L^qM, L^{q'}M)$, it follows  that $K \in \K(L^qM, L^{q'}M)$ as well.

\vskip.05in

$2^{\circ}$
As in part $1^{\circ}$, if $K \in \K(p)$ is a assumed to be a finite-rank operator the claim follows immediately. Let now $K \in \K(p)$ be arbitrary and let $(K_n)_n$ be a sequence of finite-rank operators in $\B(p)$ such that  $K_n \to K$ with respect to the  $\vertiii{ \, \cdot \, }_p$ norm. Now if $\epsilon >0$ we can find an $m$ such that 
$$\|\tilde{K} - \tilde{K_m}\|_{L^q M \to L^{q'} M} \leq \vertiii{K - K_m}_p < \epsilon/2.$$
Now since $K_m$ is finite-rank, we can also find an $N$ such that for all $n \geq N$ we have $\| K_m(\hat{u}_n) \|_{q'} <\epsilon$. Combining these we get for any $n \geq N$
 
\begin{equation}
\begin{split}
\|K(\hat{u}_n )\|_{q'}
&\leq 
\|(K-K_m)(\hat{u}_n) \|_{q'}
+
\|K_m(\hat{u}_n) \|_{q'}
\\
&<
\| \hat{u}_n \|_{q} \|K - K_m\|_{L^q M \to L^{q'} M} 
+
\epsilon /2
\\
&< \epsilon.
\end{split}
\end{equation}
Thus we conclude $\|\tilde{K}(\hat{u}_n )\|_{q'} \to 0$ as desired.

\end{proof}

\begin{prop} For each $2\leq p < \infty$, $\K(p)$ endowed with the norm $\vertiii{ \, \cdot \, }_p$ is a Banach $M$-bimodule and Banach $M^{op}$-bimodule.

\end{prop}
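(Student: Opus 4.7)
The statement is essentially a density/closure argument relying on what has already been established, so the plan is short. First, observe that $\K(p)$ is complete with respect to $\vertiii{\cdot}_p$ simply because it is defined as a closed subspace of the Banach space $\B(p)$; so the only real content is that $\K(p)$ is invariant under left and right multiplication by $M$ and by $M^{op}$, and that the bimodule inequalities hold.

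The plan is as follows. Fix $x,y \in M$ and $K \in \K(p)$. By definition of $\K(p)$, there is a sequence $K_n \in \K(L^2M)$ with $\vertiii{K_n - K}_p \to 0$. Since $\K(L^2M)$ is a two-sided ideal in $\B(L^2M)$, and since $M \subset \B(L^2M)$, the product $xK_ny$ lies in $\K(L^2M)$, so in particular in $\K(p)$. By Lemma~\ref{lemma:4.2},
\[
\vertiii{xKy - xK_ny}_p = \vertiii{x(K-K_n)y}_p \leq \|x\| \vertiii{K - K_n}_p \|y\| \longrightarrow 0,
\]
so $xKy$ is a $\vertiii{\cdot}_p$-limit of elements of $\K(p)$, and hence $xKy \in \K(p)$ by closedness. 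The bimodule norm inequality $\vertiii{xKy}_p \leq \|x\|\vertiii{K}_p\|y\|$ also follows directly from Lemma~\ref{lemma:4.2} for $K \in \K(L^2M)$ and extends to all of $\K(p)$ by density. This gives the Banach $M$-bimodule structure.

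For the $M^{op}$-bimodule structure, the argument is identical: elements of $M^{op}$ act on $L^2M$ as bounded operators by right multiplication, $\K(L^2M)$ is also a two-sided ideal with respect to multiplication by $M^{op}$ (again because it is an ideal in all of $\B(L^2M)$, which contains $M^{op}$), and Lemma~\ref{lemma:4.2} provides the inequality $\vertiii{x^{op}Ky^{op}}_p \leq \|x^{op}\|\vertiii{K}_p\|y^{op}\|$. The same approximation argument then shows that $x^{op}Ky^{op} \in \K(p)$ for every $K \in \K(p)$ and every $x^{op}, y^{op} \in M^{op}$, and extends the inequality to $\K(p)$.

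There is no real obstacle here; the only thing one has to be careful about is to invoke Lemma~\ref{lemma:4.2} in its already-proved form (both the $M$- and $M^{op}$-versions are stated there) and to use that $\K(L^2M)$ is a two-sided ideal in $\B(L^2M)$, which contains both $M$ and $M^{op}$.
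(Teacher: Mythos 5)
Your proposal is correct and follows essentially the same route as the paper: completeness comes from $\K(p)$ being a closed subspace of $\B(p)$, and the bimodule structure comes from restricting the $M$- and $M^{op}$-actions on $\B(p)$ (i.e.\ the continuous extensions guaranteed by Lemma~\ref{lemma:4.2}) together with the fact that the dense subset $\K(L^2M)$ is invariant under left and right multiplication by $M$ and $M^{op}$. Your write-up just makes the density/closedness step more explicit than the paper does.
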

\begin{proof} The fact that $\K(p)$ is a Banach space is clear from the fact that it is a norm closed subspace of $\B(p)$. The $M$-bimodule and $M^{op}$-bimodule structure also follows by restricting from $\B(p)$, and by taking into account that  the $\vertiii{ \cdot }_p$-dense subset $\K(L^2 M)$ of $\K(p)$ 
is invariant under left and right multiplication by elements of $M$, $M^{op}$.

\end{proof}

Recalling that $\K(L^2M)^*  = \B_{\text{\rm n}}^*$, we now prove the analogous result for the spaces $\K(p)$ and $\B^*_{\text{\rm n}}(p)$.

\begin{thrm} For all $p\geq 2$, $\K(p)^*=\B_{\text{\rm n}}(p)$. Also, for each $2\leq p < \infty$, $\K(p)$ endowed with its norm $\vertiii{ \, \cdot \, }_p$  is a smooth $M$-bimodule, in the sense of definition~\ref{sec:2.4}. 
\end{thrm}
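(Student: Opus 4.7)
The theorem asserts two independent claims; I treat them in turn.

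\emph{The duality $\K(p)^* \cong \B^*_{\text{\rm n}}(p)$.} First I would identify $\B(p)^* \cong \B^*(p)$ isometrically: by the very definition of $\vertiii{T}_p$, each $\varphi \in \B^*(p)$ extends by continuity to $\B(p)$ with operator norm at most $\|\varphi\|_{\B^*(p)}$; conversely, the bound $\vertiii{xTy}_p \leq \|x\|_p \|T\| \|y\|_p$ of Lemma~\ref{lemma:4.2} forces the restriction $\psi|_\B$ of any $\psi \in \B(p)^*$ to lie in $\B^*(p)$ with $\|\psi|_\B\|_{\B^*(p)} \leq \|\psi\|_{\B(p)^*}$. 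Since $\K(p)$ is a closed subspace of $\B(p)$, standard Banach-space duality gives $\K(p)^* \cong \B^*(p)/\K(p)^\perp$ isometrically, where $\K(p)^\perp = \{\varphi \in \B^*(p) : \varphi|_{\K(L^2M)} = 0\}$. The classical identification of $\B^*_{\text{\rm s}}$ as the annihilator of $\K(L^2M)$ inside $\B^*$ (a consequence of $\K(L^2M)^* = \mathcal{S}_1$ together with the $\sigma$-weak density of $\K(L^2M)$ in $\B(L^2M)$) then yields $\K(p)^\perp = \B^*(p) \cap \B^*_{\text{\rm s}} = \B^*_{\text{\rm s}}(p)$. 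Finally, the earlier proposition producing the contractive splitting $\varphi = \varphi_{\text{\rm n}} + \varphi_{\text{\rm s}}$ inside $\B^*(p)$ gives, for any $\varphi \in \B^*_{\text{\rm n}}(p)$ and $\psi \in \B^*_{\text{\rm s}}(p)$,
$$
\|\varphi\|_{\B^*(p)} = \|(\varphi+\psi)_{\text{\rm n}}\|_{\B^*(p)} \leq \|\varphi+\psi\|_{\B^*(p)},
$$
so the natural map $\B^*_{\text{\rm n}}(p) \hookrightarrow \B^*(p) \twoheadrightarrow \B^*(p)/\B^*_{\text{\rm s}}(p)$ is an isometric isomorphism.

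\emph{Smoothness.} Let $K \in \K(p)$ and $x_n \in (M)_1$ with $\|x_n\|_2 \to 0$; I would prove $\vertiii{x_n K}_p \to 0$, the right-multiplication case being entirely symmetric. The elementary preliminary input is
$$
\|x_n\|_p^p = \tau(|x_n|^p) \leq \|x_n\|^{p-2}\tau(|x_n|^2) = \|x_n\|^{p-2}\|x_n\|_2^2 \leq \|x_n\|_2^2,
$$
so $\|x_n\|_p \to 0$ for every finite $p \geq 2$. Given $\epsilon > 0$, density of $\K(L^2M)$ in $\K(p)$ yields $K_m \in \K(L^2M)$ with $\vertiii{K-K_m}_p < \epsilon/2$. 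Applying both estimates of Lemma~\ref{lemma:4.2},
$$
\vertiii{x_n K}_p \leq \vertiii{x_n(K-K_m)}_p + \vertiii{x_n K_m}_p \leq \|x_n\|\vertiii{K-K_m}_p + \|x_n\|_p\|K_m\|,
$$
which is below $\epsilon$ once $n$ is large, since $\|x_n\| \leq 1$ and $\|x_n\|_p \to 0$.

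I expect no substantial obstacle, as both halves are structural consequences of facts already established in the excerpt (Lemma~\ref{lemma:4.2}, the normal/singular decomposition inside $\B^*(p)$, and density of $\K(L^2M)$ in $\K(p)$). The one external input that merits care is the classical identification $\B^*_{\text{\rm s}} = \K(L^2M)^\perp$, on which the pinning-down of the quotient $\B^*(p)/\K(p)^\perp$ as $\B^*_{\text{\rm n}}(p)$ ultimately rests.
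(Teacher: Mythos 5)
Your proposal is correct, and the smoothness half is essentially the paper's own argument (approximate $K\in\K(p)$ by a compact, use the two estimates of Lemma~\ref{lemma:4.2} together with $\|x\|_p\leq\|x\|_2^{2/p}$ on $(M)_1$); the only cosmetic point there is that the bound $\vertiii{x(K-K_m)}_p\leq\|x\|\,\vertiii{K-K_m}_p$ for an element of the completion is the extension by continuity of Lemma~\ref{lemma:4.2}, i.e.\ the Banach bimodule structure of $\K(p)$, rather than the lemma verbatim.

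The duality half, however, genuinely differs from the paper. You identify $\B(p)^*\cong\B^*(p)$ isometrically (correct: the inequality $\vertiii{xTy}_p\leq\|x\|_p\|T\|\|y\|_p$ gives one direction, the definition of $\vertiii{\,\cdot\,}_p$ the other), then invoke subspace--quotient duality $\K(p)^*\cong\B^*(p)/\K(p)^\perp$, identify $\K(p)^\perp$ with $\B^*_{\text{\rm s}}(p)$ via the classical fact that the singular functionals on $\B(L^2M)$ are exactly the annihilator of $\K(L^2M)$, and finally use the contractivity of the normal/singular splitting inside $\B^*(p)$ (the paper's proposition on $\varphi_{\text{\rm n}},\varphi_{\text{\rm s}}$) to see that $\B^*_{\text{\rm n}}(p)\to\B^*(p)/\B^*_{\text{\rm s}}(p)$ is an isometric isomorphism, with the pairing being restriction. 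The paper instead proves that $\K(p)$ is a predual of $\B^*_{\text{\rm n}}(p)$ by verifying Kaijser's criterion: $\K(L^2M)$ separates points of $\B^*_{\text{\rm n}}(p)$ and $(\B^*_{\text{\rm n}}(p))_1$ is compact in the $\sigma(\B^*_{\text{\rm n}}(p),\K(L^2M))$-topology (inherited from $\sigma(\B^*_{\text{\rm n}},\K)$-compactness of $(\B^*_{\text{\rm n}})_1$ plus a lower-semicontinuity argument for $\|\cdot\|_{\B^*(p)}$). Your route is more elementary Banach-space duality and has the advantage of making explicit that the norm induced on $\K(L^2M)$ by pairing with $\B^*_{\text{\rm n}}(p)$ alone coincides with $\vertiii{\,\cdot\,}_p$ (this follows from your estimate $\|(\varphi+\psi)_{\text{\rm n}}\|_{\B^*(p)}\leq\|\varphi+\psi\|_{\B^*(p)}$, a point the paper's ``by definition this is $\K(p)$'' passes over quickly); the paper's route avoids identifying $\B(p)^*$ altogether at the cost of citing the predual characterization of \cite{K77}. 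Your one external input, that $\B^*_{\text{\rm s}}=\K(L^2M)^\perp$ in $\B^*$, is indeed classical (singular states vanish on rank-one projections, and a normal functional vanishing on the $\sigma$-weakly dense ideal $\K$ is zero), and is of the same nature as the identification $\K(L^2M)^*=\B^*_{\text{\rm n}}$ that the paper itself takes as known, so I do not regard it as a gap.
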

\begin{proof}
We use the description of preduals in \cite{K77} to prove the result. Note first that the compact operators $ \K(L^2M)$, form a subspace of $(\B_{\rm n}(p))^*$.  Furthermore, the space $\K(L^2M)$ separates points of $\B_{\rm n}(p)$, i.e.\ for any distinct $\varphi, \varphi' \in \B_{\rm n}(p)$ there is a $K \in \K(L^2M)$ such that $\varphi(K) \neq \varphi'(K)$. This is because the space $\K(L^2M) $, which is strictly smaller than $\K(p)$, separates the points of its dual $\B(L^2 M)_*$, which is strictly larger than $\B_{\rm n}(p)$. 

Now we claim that the unit ball of $\B^*_{\text{\rm n}}(p)$ is compact in the $\sigma(\B^*_{\text{\rm n}}(p), \K(L^2 M))$ topology. To see this, consider a net $(\varphi_{\alpha})$ in the unit ball $(\B^*_{\text{\rm n}}(p))_1$. Recall that the norm $\|\cdot \|_{\B^*(p)}$  majorizes the usual norm on $\B^*$ and that $( \B^*_{\text{\rm n}}(p))_1 $ is a subset of $(\B^*_{\text{\rm n}})_1$. Since the predual of the space $\B^*_{\text{\rm n}}$ of normal linear functionals on $\B$ is $\K(L^2M)$, we have that $(\B^*_{\text{\rm n}})_1$ is  compact in the $\sigma(\B^*_{\text{\rm n}}, \K(L^2 M))$ topology. Thus, there exists a subnet $(\varphi_{\beta})$ of our original net that converges to some $\varphi \in (\B^*_{\text{\rm n}})_1$ in the $\sigma(\B^*_{\text{\rm n}}, \K(L^2 M))$ topology. 

We claim further that this $\varphi$ is actually in $\B^*_{\text{\rm n}}(p)$. Let $x$ and $y$ be any elements of $M$. 
Since the space $\B^*_{\text{\rm n}}$ is a dual normal Banach $M$-bimodule, it follows the net $x\cdot \varphi_{\beta} \cdot y$ converges to $x \cdot \varphi\cdot y$ in the $\sigma(\B^*_{\text{\rm n}}, \K(L^2 M))$ topology. Note that this implies $\|x \cdot \varphi\cdot y\| \leq \sup_{\beta} \|x \cdot \varphi_{\beta}\cdot y\| $. In particular, notice that if $x$ and $y$ are chosen so that $\|x\|_p , \|y\|_p \leq 1$, then, since the net $(\varphi_{\beta})$ lies in $(\B^*_{\text{\rm n}}(p))_1$, we would have $\|x \cdot \varphi\cdot y\| \leq \sup_{\beta} \|x \cdot \varphi_{\beta}\cdot y\| \leq 1 $. Varying over all $x$ and $y$ with $p$-norm less than 1, we gather that $\|\varphi\|_{\B^*(p)} \leq 1$, so indeed our $\varphi$ lies in $(\B^*_{\text{\rm n}}(p))_1$. It follows then that  the unit ball $(\B^*_{\text{\rm n}}(p))_1$ is compact in the  $\sigma(\B^*_{\text{\rm n}}(p), \K(L^2 M))$ topology. Using the description of preduals in [K77], we find that a predual of  $\B^*_{\text{\rm n}}(p)$ is the norm closure of $\K(L^2M)$ in the dual space $(\B^*_{\text{\rm n}}(p))^*$. By definition this is  $\K(p)$. 

It remains then to check that $\K(p)$ is a smooth bimodule. Note that if $T \in \K(p)$ lies in $\B(L^2M)$, then by Lemma~\ref{lemma:4.2} 
the maps $x \mapsto Tx$ and $x \mapsto xT$ are $\| \cdot \|_2$ to $\vertiii{ \, \cdot \, }_p$ continuous on the unit ball of $M$.  If $T \in \K(p)$ is  an  arbitrary element, 
then we for any $\epsilon >0$ there exists $S \in \K(p) \cap \B(L^2M)$ such that $\vertiii{T-S}_p < \epsilon$. But then for any net $(x_\iota)$ in the unit ball of $M$ such that $\|x_\iota\|_2 \to 0$ we have 
$$\vertiii{Tx_\iota}_p \leq \vertiii{(T-S)x_\iota}_p + \vertiii{Sx_\iota}_p < \epsilon + \vertiii{Sx_\iota}_p.$$
Hence $\limsup_{\iota} \vertiii{Tx_\iota}_p \leq \epsilon$. Since $\epsilon$ was arbitrary, it follows $\vertiii{Tx_\iota}_p$ tends to 0. Hence the map $x \mapsto Tx$ is still $\| \cdot \|_2$ to $\vertiii{ \, \cdot \, }_p$ continuous on the unit ball of $M$. A similar argument shows the same for the map $x \mapsto xT$. It follows then that $\K(p)$ is smooth.

\end{proof}

\section{The $\text{\rm q}_M$-topology and the bimodule $\text{\rm q}\mathcal K_M$} 

In this section we consider a new topology on Banach bimodules over tracial von Neumann algebras $(M, \tau)$, 
which we will denote $\text{\rm q}_M$, that takes into consideration 
the trace on $M$, and which we will refer to as the {\it $\tau$-rank topology} (sometimes also called the topology of convergence in measure). When applied to 
the Banach $M$-bimodule $\B(L^2M)$, the restriction of the $\text{\rm q}_M$-topology 
to the unit ball $(\B(L^2M))_1$ is ``almost the same'' as the topology given by $\vertiii{ \, \cdot \, }_p$-norms, but finer. 
However, the $\text{\rm q}_M$-closure in $\B(L^2M)$ of the unit ball of compact operators $(\K(L^2M))_1$ 
coincides with its $\vertiii{ \, \cdot \, }_p$-closure, thus giving rise to an interesting Banach $M$-bimodule 
of ``almost-compact'' operators denoted $\text{\rm q}\K_M$.

\begin{defn}
 Let $\B$ be a Banach $M$-bimodule. We say that a net $(T_i)_i\subset \B$ is $\text{\rm q}_M$-convergent 
to $T\in \B$ if  the following conditions are satisfied: $\sup_i \|T_i\|<\infty$; for any $\varepsilon>0$,   
there exists $i_0$ such that for any $i\geq i_0$ there exists a projection $p\in \mathcal{P}(M)$ with 
$\tau(1-p) < \varepsilon$, $\|p(T_i-T)p\| < \varepsilon$. 
\end{defn}

Note that if these conditions are satisfied, then $\|T\| \leq \limsup_i \|T_i\|$. 
Thus, for any finite $r>0$, the $\text{\rm q}_M$-convergent nets in $(\B)_r$ 
define a topology on $(\B)_r$, that we will also denote by $\text{\rm q}_M$. Note also that if $r'\geq r >0$, then the 
restriction to $(\B)_r$ of the $\text{\rm q}_M$-topology on $(\B)_{r'}$, coincides with the $\text{\rm q}_M$-topology on $(\B)_r$. 

Note that the $\text{\rm q}_M$-topology on any bounded subset of $\B$ is implemented by the metric given by $\text{\rm q}_M(T,S)=
\inf \{\tau(1-p) + \|p(T-S)p\| \mid p\in \mathcal{P}(M)\}$. 

Given a linear subspace $\B_0\subset \B$, we denote $\overline{\B_0}^{\text{\rm q}_M}$ the union over all $r>0$ 
of the $\text{\rm q}_M$-closures of $(\B_0)_r$ in $(\B)_r$. Equivalently, $\overline{\B_0}^{\text{\rm q}_M}$ is the set 
of all scalar multiples of elements in $\overline{(\B_0)_1}^{\text{\rm q}_M}.$

\vskip .05in

The $\text{\rm q}_M$-topology on $(\B)_r$ is obviously weaker than the norm topology. 
A typical example of a Banach $M$-bimodule $\B$ that we consider is the algebra $\B(\mathcal{H})$ of all linear bounded 
operators on a Hilbert space $\mathcal{H}$ on which $M$ acts normally and faithfully, with the $M$-bimodule structure 
given by left-right multiplication by elements in $M$. More generally, we consider (linear) subspaces $\B\subset \B(\mathcal{H})$ 
with $M\B M\subset \B$, such as the space of compact operators $\K(\mathcal{H})$ on $\mathcal{H}$. 
For this class of examples, another natural topology on $\B$ is the s$^*$-topology. 
If $\B=M$, then this is easily seen to coincide with the $\text{\rm q}_M$-topology  on bounded sets. 
But in general, the s$^*$-topology is strictly weaker  than the $\text{\rm q}_M$-topology 
on $(\B)_1$ (notably if $\B=\K(\mathcal{H})$ and $M$ 
is infinite dimensional, see below). 

\begin{prop}\label{prop:6.2}
 Let $\B$ be a dual normal $M$-bimodule and $\B_0 \subset \B$ a 
norm closed sub-bimodule. 

\vskip .05in

$1^\circ$ For any $T\in \B$, the maps $(M)_1 \ni x \mapsto xT, Tx \in \B$ are $\| \cdot \|_2-\text{\rm q}_M$ continuous. 

\vskip .05in 

$2^\circ$ $(\B)_1$ is complete in the $\text{\rm q}_M$-metric $($and thus so is $\overline{(\B_0)_1}^{\text{\rm q}_M} \subset (\B)_1)$.

\vskip .05in 

$3^\circ$ $\overline{\B_0}^{\text{\rm q}_M}$ is a Banach $M$-bimodule. 

\vskip .05in 

$4^\circ$ Given any norm-separable subspace $\E \subset \overline{\B_0}^{\text{\rm q}_M}$, there exists an 
increasing sequence of projections $p_n\in M$ with $p_n \rightarrow 1$ such that $p_nTp_n \in \B_0$, 
for all $T\in \E$. 
\end{prop}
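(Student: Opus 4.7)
For Part $1^\circ$, I would argue directly via a spectral-projection cutoff in $M$. Given $x_n \to x$ in $\|\cdot\|_2$ inside $(M)_1$, set $y_n := x - x_n$, so $\|y_n\|_2 \to 0$. For each $\epsilon>0$, let $e_n = \chi_{[0,\epsilon^2]}(y_n y_n^*) \in \mathcal{P}(M)$. Chebyshev gives $\tau(1-e_n) \leq \|y_n\|_2^2 / \epsilon^2 \to 0$, while the spectral bound yields $\|e_n y_n\|^2 = \|e_n y_n y_n^* e_n\| \leq \epsilon^2$, so $\|e_n(xT - x_n T) e_n\| \leq \|e_n y_n\|\,\|T\| \leq \epsilon \|T\|$. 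This proves $\text{\rm q}_M$-convergence $x_n T \to xT$; the case $Tx_n \to Tx$ is symmetric, using the right spectral projection $\chi_{[0,\epsilon^2]}(y_n^* y_n)$.

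For Part $2^\circ$, the plan is the standard trick of extracting a geometrically converging subsequence and gluing via weak-$*$ compactness. From a $\text{\rm q}_M$-Cauchy $(T_n)\subset (\B)_1$ pick $(T_{n_k})$ with $\text{\rm q}_M$-distance $<2^{-k}$, realized by projections $p_k$ with $\tau(1-p_k)<2^{-k}$ and $\|p_k(T_{n_k}-T_{n_{k+1}})p_k\|<2^{-k}$. Set $q_k=\bigwedge_{j\geq k}p_j\uparrow 1$; since $q_k\leq p_j$ for $j\geq k$, the sequence $(q_kT_{n_j}q_k)_j$ is norm-Cauchy, converging in norm to some $S_k\in(\B)_1$ satisfying $q_k S_{k'}q_k=S_k$ for $k\leq k'$. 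Dual normality of $\B$ gives weak-$*$ compactness of $(\B)_1$ (Banach--Alaoglu) and weak-$*$ continuity of each cutdown $X\mapsto q_k X q_k$. Taking a weak-$*$ cluster point $T\in(\B)_1$ of $(S_k)$ produces $q_k T q_k=S_k$ for all $k$, and the Cauchy property combined with $\tau(1-q_k)\to 0$ forces $T_n\to T$ in $\text{\rm q}_M$.

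For Part $3^\circ$, I expect the main obstacle to be closure of $\overline{\B_0}^{\text{\rm q}_M}$ under left and right multiplication by $M$; closure under sums and scalars is routine by meeting two projections, and norm-completeness follows from Part $2^\circ$ combined with a diagonal argument (using that norm convergence implies $\text{\rm q}_M$-convergence, and that the defining balls $\overline{(\B_0)_r}^{\text{\rm q}_M}$ can be taken uniformly bounded along a norm-Cauchy sequence). The difficulty for the $M$-action: given $T_n\to T$ in $\text{\rm q}_M$ with $T_n\in\B_0$ bounded and $x\in M$, I need $xT_n\to xT$ in $\text{\rm q}_M$. Let $p$ realize $\tau(1-p),\|p(T_n-T)p\|<\delta$; decomposing $px=pxp+px(1-p)$ gives
\[
\|p(xT_n-xT)p\| \leq \|(pxp)\cdot p(T_n-T)p\| + \|px(1-p)(T_n-T)p\|.
\]
The first term is $\leq \|x\|\delta$ by bimodule associativity, but the second has no direct bound. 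The remedy is a correction projection: let $q_0=\chi_{[0,\lambda^2]}(x(1-p)x^*)\in\mathcal{P}(M)$, so that $\tau(1-q_0)\leq \|x\|^2\tau(1-p)/\lambda^2$ and $\|q_0 x(1-p)\|\leq\lambda$. Replacing $p$ by $q:=p\wedge q_0$, the identical decomposition yields $\|q(xT_n-xT)q\|\leq \|x\|\delta+\lambda\sup_n\|T_n-T\|$, while $\tau(1-q)\leq\delta(1+\|x\|^2/\lambda^2)$. Tuning $\lambda$ first, then $\delta$, finishes the argument; the right-multiplication case uses $\chi_{[0,\lambda^2]}((1-p)x^*x(1-p))$ symmetrically.

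For Part $4^\circ$, enumerate a norm-dense sequence $\{T_l\}\subset\E$. The single-element case is the key: for $T\in\overline{\B_0}^{\text{\rm q}_M}$ with approximants $S_m\in\B_0$, choose $m_k$ and projections $e_k$ with $\tau(1-e_k)<2^{-k}$ and $\|e_k(T-S_{m_k})e_k\|<2^{-k}$, and set $p_n=\bigwedge_{k\geq n}e_k\uparrow 1$. For $k\geq n$, $\|p_n(T-S_{m_k})p_n\|\leq 2^{-k}$, so $p_n T p_n=\lim_k p_n S_{m_k}p_n$ in norm; since each $p_n S_{m_k}p_n\in\B_0$ and $\B_0$ is norm-closed, $p_n T p_n\in\B_0$. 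Applying this to each $T_l$ yields $(p_n^{(l)})_n\uparrow 1$ with $p_n^{(l)}T_l p_n^{(l)}\in\B_0$, and setting $p_n=\bigwedge_l p_{N(n,l)}^{(l)}$ with $N(n,l)$ increasing in $n$ and $\tau(1-p_{N(n,l)}^{(l)})<2^{-n-l}$, we get $p_n\uparrow 1$ and $p_n T_l p_n=p_n\cdot(p_{N(n,l)}^{(l)}T_l p_{N(n,l)}^{(l)})\cdot p_n\in\B_0$ for every $l$ (by the bimodule property of $\B_0$). Norm density of $\{T_l\}$ in $\E$ combined with norm-closedness of $\B_0$ then gives $p_n T p_n\in\B_0$ for all $T\in\E$.
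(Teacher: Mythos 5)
Your proposal is correct and, on $1^\circ$, $2^\circ$ and $4^\circ$, follows essentially the paper's route: Chebyshev-type spectral cutoffs for $1^\circ$; meets of projections plus weak$^*$-compactness of $(\B)_1$ for $2^\circ$ (the paper takes a weak$^*$-limit point of $(T_m)$ itself and invokes weak$^*$-lower semicontinuity of the norm, whereas you first form the norm-limits $S_k$ of the compressions and take a weak$^*$-cluster point of those; both hinge on weak$^*$-continuity of $X\mapsto q_kXq_k$, so this is a cosmetic variation); and a double-indexed meet of projections plus norm-closedness of $\B_0$ for $4^\circ$. The real added value is in $3^\circ$: the paper dismisses the $M$-invariance of $\overline{\B_0}^{\text{\rm q}_M}$ as obvious, while you correctly isolate the cross term $px(1-p)(T_n-T)p$ and control it with the auxiliary projection $\chi_{[0,\lambda^2]}(x(1-p)x^*)$ before meeting with $p$; this is exactly the needed fix (an alternative is to reduce to unitaries $u$ and use $upu^*\wedge p$). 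Two small points. First, in the right-multiplication case the auxiliary projection must be $\chi_{[0,\lambda^2]}(x^*(1-p)x)$, which controls $\|(1-p)xq_0\|$; the projection you wrote, $\chi_{[0,\lambda^2]}((1-p)x^*x(1-p))$, dominates $p$ (as $p$ lies in the kernel of $(1-p)x^*x(1-p)$), so $p\wedge q_0=p$ and the cross term is not controlled -- and one cannot simply pass to adjoints, since an abstract Banach bimodule carries no $*$-operation. Second, your parenthetical claim that along a norm-Cauchy sequence the witnessing balls $\overline{(\B_0)_r}^{\text{\rm q}_M}$ can be taken with uniformly bounded $r$ is precisely the point that needs justification (the paper glosses it in the same way when it appeals to $2^\circ$); the cleanest repair is to use your own $4^\circ$: choose an increasing sequence $q_m\rightarrow 1$ with $q_mT_nq_m\in\B_0$ for all $n,m$, deduce $q_mTq_m\in\B_0$ from norm-closedness of $\B_0$, and note $\|q_mTq_m\|\leq\|T\|$ and $q_mTq_m\rightarrow T$ in $\text{\rm q}_M$, so that $T\in\overline{(\B_0)_{\|T\|}}^{\text{\rm q}_M}$. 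With these two repairs, which stay entirely within your framework, the argument is complete.
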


\begin{proof} $1^\circ$ If $\varepsilon >0$ and $\|x\|\leq 1$ satisfies $\|x\|_2\leq \varepsilon$, 
then the spectral projection $p$ of $xx^*$ corresponding to the interval $[0,\varepsilon]$ has trace at least 
$1-\varepsilon$, or else we have $\|x\|_2^2=\|px\|_2^2+\|(1-p)x\|_2^2 > \|(1-p)x\|_2^2\geq \varepsilon^2$, 
a contradiction. Thus, we have $\|pxT\|\leq \|px\| \|T\|\leq \varepsilon$ and $\tau(1-p)\leq \varepsilon$. 
This shows that $(M)_1 \ni x \mapsto xT \in \B$ is $\| \cdot \|_2-\text{\rm q}_M$ continuous. The proof for  
$(M)_1 \ni x \mapsto Tx \in \B$ is similar. 

$2^\circ$ If $T_n\in (\B)_1$ is $\text{\rm q}_M$-Cauchy, then for any $k \geq 1$, there exists 
$n_k$ such that for any $n, m \geq n_k$ there exists a projection $p_k\in \mathcal{P}(M)$ 
with the property that $\tau(1-p_k)+\|p_k(T_m-T_n)p_k\|\leq 2^{-k}$. Thus, the sequence of projections $P_k=\wedge_{l\geq k} p_l$, $k\geq 1$, 
is increasing and satisfies $\tau(1-P_k)\leq 2^{-k+1}$, $\|P_k(T_n-T_m)P_k\|\leq 2^{-k}$ for any $n, m \geq n_k$. By the inferior semicontinuity of the 
norm on $\B$ with respect to the $w^*$-topology, it follows that any $w^*$-limit point $T\in (\B)_1$ of the sequence $\{T_m\}_m$ satisfies 
$\|P_k(T-T_n)P_k\|\leq 2^{-k}$ for any $n\geq n_k$. This shows that $\{T_m\}_m$ is $\text{\rm q}_M$-convergent to $T$. 
  
$3^\circ$ If $T_n$ is a sequence in $\overline{\B_0}^{\text{\rm q}_M}$ that converges in norm to some $T\in \B$, 
then $T_n$ is automatically bounded and by $2^\circ$ we have $T \in \overline{\B_0}^{\text{\rm q}_M}$ as well. 
The invariance of $\overline{\B_0}^{\text{\rm q}_M}$ to left-right multiplication by elements in $M$ is obvious. 

$4^\circ$ It is sufficient to show the existence of such projections for a countable subset   
$\{T_i\}_i \subset \overline{(\B_0)_1}^{\text{\rm q}_M}$. For each $i\leq n \leq k$, there exists a projection $p_{i,k}\in M$ 
and $S_{i,k}\in (\B_0)_1$ such that $\|p_{i,k}(T_i-S_{i,k})p_{i,k}\|\leq 2^{-k}$ and $\tau(1-p_{i,k})\leq 2^{-k}/n$. 
Thus, if we let $P_{n,k}=\wedge_{i\leq n} p_{i,k}$, 
then $\|P_{n,k}(T_i-S_{i,k})P_{n,k}\|\leq 2^{-k}$, $\forall i\leq n$, and $\tau(1-P_{n,k})\leq 2^{-k}$. If we now put $P_n=\wedge_{k\geq n} P_{n,k}$, 
then $P_n$ is increasing, $\tau(1-P_n)\leq 2^{-n+1}$, and $\|P_n(T_i-S_{i,k})P_n\| \leq 2^{-n}$, $\forall i\leq n\leq k$. For each fixed 
$m$, by applying this to $k=n\geq m$ and taking into account that $P_mP_n=P_m$, 
it follows that $\|P_m(T_i-S_{i,n})P_m\| \leq 2^{-n}$, $\forall i\leq m$. This shows in particular that $\{P_mS_{i,n}P_m\}_n\subset (\B_0)_1$ is 
norm-Cauchy and thus convergent to some $X_{i,m}\in (\B_0)_1$. It also shows that $X_{i,m}$ satisfy $P_mT_iP_m=X_{i,m}$, while  
$\tau(1-P_m)\leq 2^{-m+1}$, $\forall i\leq m$. 

\end{proof}

\begin{defn}
 Given a tracial von Neumann algebra $(M, \tau)$ in its standard representation 
 on $L^2M$, we denote by $\text{\rm q}\K_M$ the $\text{\rm q}_M$-closure of $\K(L^2M)$ in $\B(L^2M)$ 
 and call its elements $\text{\rm q}_M$-{\it compact operators}.   
 \end{defn}
 
 Notice that besides its $M$-bimodule structure, the algebra $\B(L^2M)$ also has an $JMJ=M'$ 
 bimodule structure, where $J: L^2M \rightarrow L^2M$ is the canonical conjugacy defined 
 by $J(x)=x^*$, $x\in M\subset L^2M$, and $M'$ denotes as usual the commutant of $M$ in $\B(L^2M)$. The 
 algebra $JMJ=M'$ can be naturally identified with the opposite algebra $M^{\text{\rm op}}$ of $M$, 
 and we will retain this notation for $JMJ$.

\begin{prop}
 The space $\text{\rm q}\K_M$ is a norm closed $^*$-subspace of $\B(L^2M)$,  
 which is both an $M$-bimodule and an  $M^{\text{\rm op}}$-bimodule.  
\end{prop}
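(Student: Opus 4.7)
The plan is to observe that almost everything follows immediately from Proposition~\ref{prop:6.2} applied to $\B_0 = \K(L^2M) \subset \B = \B(L^2M)$, since $\K(L^2M)$ is a norm-closed $M$-sub-bimodule. The remaining content is a direct verification that $\text{\rm q}_M$-approximation is compatible with taking adjoints and with left/right multiplication by elements of $M^{\text{\rm op}}$.

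First, by Proposition~\ref{prop:6.2}$(3^\circ)$, the space $\overline{\K(L^2M)}^{\text{\rm q}_M} = \text{\rm q}\K_M$ is a Banach $M$-bimodule. In particular, it is norm-closed in $\B(L^2M)$ (this uses Proposition~\ref{prop:6.2}$(2^\circ)$: any norm-Cauchy, hence bounded, sequence in $\text{\rm q}\K_M$ is also $\text{\rm q}_M$-Cauchy in $(\B(L^2M))_r$ for some finite $r$, and converges there to a $\text{\rm q}_M$-limit that must again lie in the closure).

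Next, for $\ast$-invariance: let $T \in \text{\rm q}\K_M$, and pick an $r > 0$ together with a net $(K_\iota) \subset (\K(L^2M))_r$ with $K_\iota \to T$ in the $\text{\rm q}_M$-topology. Since compact operators form a $\ast$-ideal, each $K_\iota^\ast$ lies in $(\K(L^2M))_r$. For any projection $p \in \mathcal{P}(M)$ we have the identity
\[
\|p(T^\ast - K_\iota^\ast)p\| = \|(p(T-K_\iota)p)^\ast\| = \|p(T-K_\iota)p\|,
\]
so the same choices of projections witnessing $K_\iota \to T$ also witness $K_\iota^\ast \to T^\ast$. Hence $T^\ast \in \text{\rm q}\K_M$.

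Finally, for the $M^{\text{\rm op}}$-bimodule structure: fix $a \in M^{\text{\rm op}} = JMJ$. For any $T \in \text{\rm q}\K_M$, take an approximating net $(K_\iota) \subset (\K(L^2M))_r$ as above. Since $aK_\iota, K_\iota a$ remain compact and are norm-bounded by $\|a\|r$, and since $a$ commutes with every projection $p \in \mathcal{P}(M)\subset M$, we have
\[
\|p(aT - aK_\iota)p\| = \|ap(T-K_\iota)p\| \leq \|a\|\,\|p(T-K_\iota)p\|,
\]
and similarly $\|p(Ta - K_\iota a)p\| \leq \|a\|\,\|p(T - K_\iota)p\|$. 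Rescaling the $\varepsilon$ in the definition of $\text{\rm q}_M$-convergence by $\max(1,\|a\|)$ shows that $aK_\iota \to aT$ and $K_\iota a \to Ta$ in $\text{\rm q}_M$, so $aT, Ta \in \text{\rm q}\K_M$. The bimodule norm inequalities $\|aT\|, \|Ta\| \leq \|a\|\|T\|$ are inherited from $\B(L^2M)$. There is no real obstacle here; the one point to be careful about is that the $\text{\rm q}_M$-closure as defined in the excerpt is taken union over all $r>0$, so each step must preserve the bounded-approximation condition, which it does by tracking norms through the calculations above.
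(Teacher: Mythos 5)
Your proof is correct and follows essentially the same route as the paper: apply Proposition~\ref{prop:6.2} with $\B_0=\K(L^2M)$ to get the norm-closed Banach $M$-bimodule structure, and then check $*$-invariance and the $M^{\text{\rm op}}$-actions directly (the paper simply declares these last points clear, while you spell out the projection computations that make them so).
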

\begin{proof}
By applying Proposition~\ref{prop:6.2} to $\B_0=\K(L^2M)\subset \B(L^2M)=\B$, 
it follows that $\text{\rm q}	_M$ is a norm closed $M$-bimodule. It is clearly an $M^{\text{\rm op}}$-bimodule 
and closed under the $^*$-operation. 

\end{proof}

\begin{thrm}\label{thrm:6.5} For each $2 \leq p < \infty$ denote by  $\K_p$ the space of all 
operators $T\in \B=\B(L^2M)$ with the property that there exists a sequence  of compact operators $K_n \in \K_p$ such that 
$\sup_n \|K_n\|<\infty$ and $\lim_n \vertiii{T-K_n}_p=0$. Then $\K_p = \text{\rm q}\K_M$.  
\end{thrm}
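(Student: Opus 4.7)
The plan is to establish the two inclusions $\text{q}\K_M \subseteq \K_p$ and $\K_p \subseteq \text{q}\K_M$ separately. The forward direction is soft and only uses the additive estimates of Lemma~\ref{lemma:4.2}. Given $T \in \text{q}\K_M$, choose a bounded sequence $(K_n) \subset \K(L^2M)$ with $\sup_n\|K_n\|\le C$ and $K_n \to T$ in $\text{q}_M$. For each $\varepsilon>0$ and $n$ large there is a projection $q_n \in \mathcal{P}(M)$ with $\tau(1-q_n)<\varepsilon$ and $\|q_n(T-K_n)q_n\|<\varepsilon$. I would decompose
\[
T-K_n = q_n(T-K_n)q_n + (1-q_n)(T-K_n) + q_n(T-K_n)(1-q_n),
\]
bound the diagonal term by $\varepsilon$ via $\vertiii{\cdot}_p \le \|\cdot\|$, and bound each off-diagonal term by $\|1-q_n\|_p \cdot \|T-K_n\| \le C\varepsilon^{1/p}$ using $\vertiii{xSy}_p \le \|x\|_p \|S\|\|y\|_p$. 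Summing gives $\vertiii{T-K_n}_p \le \varepsilon + 2C\varepsilon^{1/p} \to 0$, which proves $T\in \K_p$.

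The reverse inclusion $\K_p \subseteq \text{q}\K_M$ is the heart of the theorem and relies crucially on the factorization $\vertiii{T}_p = \inf\{\|a\|_p \|S\|\|b\|_p : aSb = T\}$ from Lemma~\ref{lemma:4.6}. Given $T\in \K_p$ with $\sup_n\|K_n\|<\infty$ and $\delta_n := \vertiii{T-K_n}_p \to 0$, I would factor $T-K_n = a_n S_n b_n$ with $a_n,b_n\in M$ and (after rescaling) $\|S_n\|=1$, $\|a_n\|_p = \|b_n\|_p \le (2\delta_n)^{1/2}$. Setting $t_n := (2\delta_n)^{1/4}$, I would take $p_n$ to be the spectral projection of $(a_n a_n^*)^{1/2}$ for $[0,t_n]$ and $q_n$ the spectral projection of $(b_n^* b_n)^{1/2}$ for $[0,t_n]$. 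A direct Chebyshev computation then gives
\[
\|p_n a_n\| \le t_n,\ \ \|b_n q_n\|\le t_n,\ \ \tau(1-p_n),\,\tau(1-q_n)\le t_n^{-p}(2\delta_n)^{p/2}=(2\delta_n)^{p/4}.
\]
Putting $r_n := p_n \wedge q_n$ yields $\tau(1-r_n)\le 2(2\delta_n)^{p/4}\to 0$ and
\[
\|r_n(T-K_n)r_n\| \le \|r_n a_n\|\cdot\|S_n\|\cdot\|b_n r_n\| \le t_n^2 = (2\delta_n)^{1/2} \to 0,
\]
which witnesses $K_n \to T$ in the $\text{q}_M$-topology and hence $T \in \text{q}\K_M$.

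The main obstacle is precisely this second direction: one must upgrade a $\vertiii{\cdot}_p$-smallness estimate into an operator-norm smallness on a large compressed corner, whereas the naive approach of approximating $T-K_n$ in operator norm is doomed since $\|T-K_n\|$ need not go to $0$. The resolution is exactly the multiplicative factorization of Lemma~\ref{lemma:4.6}: it replaces the scalar bound $\vertiii{T-K_n}_p$ by a product $\|a_n\|_p \|b_n\|_p$ of two $L^p$-quantities, each of which can be independently compressed by an almost-full projection into something of small operator norm, and the two compressions cooperate to squeeze the whole product $a_n S_n b_n$. The exponent bookkeeping is tight but has just enough room (via the choice $t_n = \delta_n^{1/4}$) to drive both the trace deficit $\tau(1-r_n)$ and the compressed operator norm to $0$ simultaneously.
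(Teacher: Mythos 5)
Your proposal is correct and follows essentially the same route as the paper: the inclusion $\text{\rm q}\K_M\subseteq \K_p$ via the three-corner decomposition and the estimates of Lemma~\ref{lemma:4.2}, and the reverse inclusion via the factorization $T-K_n=a_nS_nb_n$ from Lemma~\ref{lemma:4.6} followed by spectral (Chebyshev) projections of $|a_n^*|$ and $|b_n|$ compressing the corner to small operator norm. The only difference is that you write out explicitly the rescaling and exponent bookkeeping ($t_n=(2\delta_n)^{1/4}$, $\tau(1-r_n)\le 2(2\delta_n)^{p/4}$) that the paper leaves implicit.
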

\begin{proof} To see that $\text{\rm q}\K_M \subset \K_p$  
let us show that the $\text{\rm q}_M$-topology on the unit ball of $\B(L^2M)$ is stronger than the $\vertiii{ \, \cdot \, }_p$-topology, $\forall 2\leq p < \infty$. Indeed, 
by Lemma~\ref{lemma:4.2}, if $T\in \B(L^2M)$ and $P\in \mathcal P(M)$, then we have 
\begin{equation}
\begin{split}
\vertiii{T}_p 
&\leq \vertiii{PTP}_p + \vertiii{PT(1-P)}_p + \vertiii{(1-P)T}_p 
\\
&\leq 
\|PTP\| + 2\|1-P \|_p \|T\|= \|PTP\| + 2(\tau(1-P))^{1/p} \|T\|. 
\end{split}
\end{equation}
This shows that $\vertiii{T}_p \leq 2 \inf \{\|PTP\|  + (\tau(1-P))^{1/p} \|T\|\}$, implying that the $\text{\rm q}_M$-toplogy on $(\B(L^2M))_1$ is stronger 
than the $\vertiii{ \, \cdot \, }_p$-topology.

To show that $\K_p \subset \text{\rm q}\K_M$, let $T$ be an operator in $K_p$. 
Then we can find a sequence of uniformly bounded compact operators $K_n \in \K(L^2M)$ such $\vertiii{T-K_n}_p$ tends to 0. Since $T-K_n$ is still in $\B(L^2M)$, Lemma~\ref{lemma:4.6} says we can find $a_n,b_n \in M$ and $S_n \in B(L^2M)$ such that $T-K_n = a_nS_nb_n$ and $\|a_n\|_p \|S_n\| \|b_n\|_p $ tends to 0. Taking spectral projections of $|a_n|$ and $|b_n^*|$ we can find a sequence of projections $p_n$ with $\|p_n\|_p$ tending to 1 such that 
$$\|p_n(T-K_n)p_n \| = \|p_na_n S_n b_n p_n\| \to 0$$

Thus, $\K_p \subset \text{\rm q}\K_M$, which combined with the first part shows that $\K_p=\text{\rm q}\K_M$.

\end{proof}

It is useful to note that due to their ``compact nature'', elements in the spaces $\text{\rm q}\K_M$ cannot intertwine 
diffuse subalgebras of $M$. This fact will be used later to deduce that an operator in $\B(L^2M)$ 
that commutes with $M$ modulo $\text{\rm q}\K_M$ and commutes with a diffuse subalgebra of $M$, must in fact commute with all of $M$. 

\begin{lemma}\label{lemma:6.6} Let $B\subset eMe$ be a diffuse von Neumann
subalgebra and $\sigma: B \rightarrow fMf$ be a unital faithful
$^*$-homomorphism, for some non-zero projections $e,f\in M$. If $K\in \text{\rm q}\K_M$
satisfies $Kb=\sigma(b)K$, $\forall b\in B$, then $K=(1-f)K(1-e)$.
\end{lemma}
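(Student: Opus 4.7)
My plan is to first extract the structural consequence $Ke=fK$ by plugging $b=e$ into the intertwining, using that $\sigma$ is unital and $\sigma(e)=f$. This forces $fK(1-e)=0$ and $(1-f)Ke=0$, so $K = fKe + (1-f)K(1-e)$, and the task reduces to showing that $K_0:=fKe$ is zero. A direct check confirms that $K_0$ still satisfies $K_0 b=\sigma(b)K_0$ for all $b\in B$, and $K_0\in\text{\rm q}\K_M$ since the latter is an $M$-$M^{\text{op}}$-bimodule.

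Argue by contradiction: assume $K_0\neq 0$, pick $a\in eM$ so that $\eta:=K_0\hat a\neq 0$, and choose a Haar unitary $u\in B$ (available because $B$ is diffuse), so $u^n\to 0$ weakly. Applying Proposition~\ref{prop:6.2}.$4^\circ$ to the (norm-separable) subspace $\complex K_0\subset\text{\rm q}\K_M$ produces an increasing sequence $p_m\in\mathcal P(M)$ with $p_m\nearrow 1$ and $p_m K_0 p_m\in\K(L^2M)$. The intertwining gives $K_0 u^n\hat a=\sigma(u)^n\eta$, whose norm is the constant $\|\eta\|$ (as $\sigma(u)^n$ is a unitary in $fMf$). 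The plan is to bound $\|p_m\sigma(u)^n\eta\|$ from above and below in incompatible ways.

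For the upper bound, I decompose
\[
p_m\sigma(u)^n\eta \;=\; p_m K_0 u^n\hat a \;=\; (p_m K_0 p_m)(p_m u^n\hat a) \;+\; p_m K_0(1-p_m)u^n\hat a.
\]
For each fixed $m$ the first summand tends to $0$ as $n\to\infty$, since $p_m u^n\hat a\to 0$ weakly in $L^2M$ while $p_m K_0 p_m$ is compact. For the second, cyclicity of the trace together with $aa^*\le\|a\|^2 e$ and $u^n e(u^n)^*=e$ yields
\[
\|(1-p_m)u^n\hat a\|_2^2 \;=\; \tau\bigl((1-p_m)\,u^n aa^*(u^n)^*\bigr)\;\le\;\|a\|^2\,\tau\bigl((1-p_m)e\bigr),
\]
which vanishes as $m\to\infty$ \emph{uniformly} in $n$. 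For the lower bound, I approximate $\eta\in fL^2M$ by $\hat b$ with $b\in fM$ bounded and compute
\[
\|p_m\sigma(u)^n\hat b\|_2^2 \;=\; \tau\bigl(p_m\,\sigma(u)^n bb^*\sigma(u)^{-n}\bigr)\;\ge\;\|b\|_2^2 - \|b\|^2\,\tau(1-p_m),
\]
the last step via the H\"older bound $|\tau((1-p_m)X)|\le\|X\|\tau(1-p_m)$ applied with $X=\sigma(u)^n bb^*\sigma(u)^{-n}$, whose norm is $\|b\|^2$ by unitary invariance. Hence $\|p_m\sigma(u)^n\eta\|$ stays arbitrarily close to $\|\eta\|$ as $m\to\infty$, uniformly in $n$, contradicting the upper bound and forcing $\eta=0$.

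The main obstacle---the step requiring the most care---is making the cross term $p_m K_0(1-p_m)u^n\hat a$ vanish uniformly in $n$. For a generic vector this would fail, but crucially $u^n$ lies inside the corner $eMe$, so conjugation by $u^n$ fixes $e$ and propagates $aa^*\le\|a\|^2 e$ to $u^n aa^*(u^n)^*\le\|a\|^2 e$. The same unitary-invariance trick, now via $\sigma(u)^n\in fMf$, drives the lower bound. Together these allow the ``almost compactness'' provided by Proposition~\ref{prop:6.2}.$4^\circ$ to substitute cleanly for genuine compactness in the classical argument that compact operators cannot intertwine diffuse subalgebras.
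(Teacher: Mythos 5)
Your proof is correct, and it follows the same reduction as the paper (plug in $b=e$ to get $Ke=fK$, hence the corner decomposition, and reduce to showing $fKe=0$ via a Haar unitary $u\in B$ with $u^n\to 0$ weakly), but the way you exploit the ``almost compactness'' of $\text{\rm q}\K_M$ is genuinely different in its technical route. The paper argues directly, with no contradiction and no cutting projections in sight: it estimates $\|K(\widehat{u^nx})\|_1$ in the $L^1$-norm, where the smallness of $\tau(1-p)$ for the projections implicit in the definition of $\text{\rm q}\K_M$ is absorbed by bounds of the type $\|(1-p)\zeta\|_1\leq \tau(1-p)^{1/2}\|\zeta\|_2$, so that $\|K(\widehat{u^nx})\|_1\to 0$, while the intertwining and the $L^1$-isometry of left multiplication by the unitaries $\sigma(u^n)\in fMf$ force this quantity to be the constant $\|K(\hat{x})\|_1$; hence $K\hat x=0$ outright. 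You instead stay in $L^2$, where one cannot drop the cutting projection, so you import it explicitly via Proposition~\ref{prop:6.2}.$4^\circ$ (compact corners $p_mK_0p_m$), keep $p_m$ on the outside of every estimate, approximate $\eta$ by a bounded vector $\hat b$, $b\in fM$, to get the uniform-in-$n$ lower bound $\|p_m\sigma(u)^n\hat b\|_2^2\geq \|b\|_2^2-\|b\|^2\tau(1-p_m)$, and derive a contradiction by fixing $m$ large and then letting $n\to\infty$ against the compactness of $p_mK_0p_m$. Both arguments are sound; the paper's $L^1$ device is shorter and avoids invoking Proposition~\ref{prop:6.2}.$4^\circ$, whereas your version makes explicit (and verifies in detail) exactly the step the paper compresses into ``follows easily from the definition of $\text{\rm q}\K_M$,'' at the cost of the extra approximation of $\eta$ and the contradiction scaffolding.
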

\begin{proof}
 Note that $fK(1-e)=0$ and $(1-f)Ke=0$. By
replacing $K$ by $K-(1-f)K(1-e)$, we may also assume
$(1-f)K(1-e)=0$. So we have to prove that if $K$ satisfies the condition in the hypothesis and $K=fKe$, then $K=0$.

Let $u$ be a Haar unitary in $B$ and $x\in eM$. Since $u^nx$ tends weakly to $0$ and $\|v(\xi)\|_1=\|\xi\|_1$ 
for any unitary $v\in fMf$ and $\xi\in L^1(fM)$, we get

$$
0=\lim_n \|K(\widehat{u^nx})\|_1=\lim_n\|\sigma(u^n) (K(\hat{x}))\|_1
= \|K(\hat{x})\|_1, $$
where the first equality follows easily from the definition of $\text{\rm q}\K_M$.
This shows that $K=fKe$ satisfies $K(\widehat{eM})=0$, thus $K=0$. 

\end{proof}

\section{Derivations of $M$ into  $\text{\rm q}\K_M$ }

Recall that if $M$ is a Banach algebra (always assumed unital) and $\B$ is a Banach $M$-bimodule, then a derivation of $M$ into $\B$ 
is a linear map $\delta: M \rightarrow \B$ satisfying the property $\delta(xy)=x\delta(y)+ \delta(x)y$, for all $x, y \in M$. 

It is immediate to check 
that if $T\in \B$, then the map $\text{\rm ad}T: M \rightarrow \B$ defined by $\text{\rm ad}T(x)=[T, x]:=Tx-xT$, $x\in \B$, is a derivation. 
Such derivations are called {\it inner}. 

It is useful to note that if $F\subset M$ is a set, then $\delta_{|F}$ determines the values of $\delta$ on all the 
algebra $\text{\rm Alg}(F)$ generated by $F$. 

Recall from \cite{R72} that 
a derivation  is automatically norm-continuous. Moreover, if $M$ is a von Neumann algebra and $\B$ is a dual normal $M$-bimodule, then 
any derivation is automatically continuous from $M$ with the ultra-weak topology to $\B$ with its $\sigma(\B, \B_*)$ topology. 

Thus, if $F=F^* \subset M$ is a set that generates $M$ as a von Neumann algebra and $M_0$ is the norm closure of the $^*$-algebra generated by $F$, 
then any derivation $\delta$ of $M$ into a Banach $M$-bimodule  is uniquely determined on $M_0$ by the values it takes on $F$, $\delta_{|F}$. If in addition 
$\B$ is a dual normal Banach bimodule, then all of $\delta$ is uniquely determined by $\delta_{|F}$. 

Let us first notice an automatic continuity (smoothness) result for derivations, with 
respect to the $\text{\rm q}_M$-metric and the $\vertiii{ \, \cdot \, }_p$ norms.

\begin{thrm}\label{thrm:7.1}
 Let $(M,\tau)$ be a tracial von Neumann algebra,  $\B$ a Banach $M$-bimodule and 
$\delta: M \rightarrow \B$ a derivation. 
Then $\delta$ is automatically $\| \cdot \|_2$-$\text{\rm q}_M$ continuous on $(M)_1$. More precisely, if $\varepsilon >0$, then given any $x\in (M)_1$ 
with $\|x\|_2\leq (\varepsilon/2)^{3/2}$, there exists $p\in \mathcal P(M)$ such that $\tau(1-p)\leq \varepsilon$ and $\|p\delta(x)p\|\leq \varepsilon \|\delta\|$.

In particular, if 
$\B = \text{\rm q}\K_M$, then $\delta$ is automatically continuous from $(M)_1$ with the $\| \cdot \|_2$-topology to 
$\text{\rm q}\K_M$ with the topology given by the $\text{\rm q}_M$-metric. 
\end{thrm}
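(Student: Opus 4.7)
The plan is to prove the quantitative estimate directly, by applying the Leibniz rule to a spectral projection of $x$, and then to deduce the $\text{\rm q}_M$-continuity for $\mathcal B = \text{\rm q}\K_M$ as a formal consequence. The first step is to recall Ringrose's automatic-continuity theorem, so that $\|\delta\|<\infty$, and to reduce to the self-adjoint case by writing $x=x_1+ix_2$ with $x_j=\tfrac12(x+x^*)$, $\tfrac{1}{2i}(x-x^*)$; both satisfy $\|x_j\|\le \|x\|$ and $\|x_j\|_2\le \|x\|_2$, and the final projection will be built as $p=p_1\wedge p_2$, costing only a factor of $2$ in each estimate.

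For self-adjoint $y\in(M)_1$ I would take $p:=e_{[-\eta,\eta]}(y)$ for a parameter $\eta>0$ to be optimized. Chebyshev's inequality for the trace gives $\tau(1-p)\le \|y\|_2^2/\eta^2$, and since $p$ commutes with $y$ we have $\|yp\|=\|py\|\le \eta$. The decisive identity comes from $p=p^2$: applying $\delta$ and multiplying by $p$ on both sides yields $p\delta(p)p=0$. Combining this with Leibniz on $yp=py$ gives
\[
p\,\delta(y)\,p \;=\; p\,\delta(yp)\,p - py\,\delta(p)\,p \;=\; p\,\delta(yp)\,p,
\]
so $\|p\delta(y)p\|\le \|\delta\|\,\|yp\|\le \|\delta\|\eta$. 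Applying this separately to $x_1,x_2$ and intersecting the spectral projections produces $\tau(1-p)\le 2\|x\|_2^2/\eta^2$ and $\|p\delta(x)p\|\le 2\|\delta\|\eta$. The choice $\eta:=\varepsilon/2$ gives the $p\delta(x)p$-bound immediately, and the $\tau$-bound reduces to the condition $\|x\|_2\le (\varepsilon/2)^{3/2}$ in the statement (since $2\|x\|_2^2/(\varepsilon/2)^2\le \varepsilon$ is equivalent to $\|x\|_2^2\le \varepsilon^3/8$).

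For the particular case $\mathcal B=\text{\rm q}\K_M$, I would simply note that $\delta$ is norm-bounded, so $\delta((M)_1)$ is norm-bounded in $\text{\rm q}\K_M$, which supplies the uniform norm bound required by the definition of $\text{\rm q}_M$-convergence. Given a net $x_\iota$ in $(M)_1$ with $\|x_\iota-x\|_2\to 0$, linearity of $\delta$ reduces the problem to $x=0$, and the quantitative estimate (applied to $\tfrac12(x_\iota-x)\in (M)_1$, whose $\|\cdot\|_2$-norm is still small) yields projections witnessing $\delta(x_\iota-x)\to 0$ in the $\text{\rm q}_M$-metric. The only step where anything nontrivial happens is the identity $p\delta(y)p=p\delta(yp)p$; once one observes that spectral projections of $y$ commute with $y$ and that $p\delta(p)p=0$, the rest is a choice of parameters, so I do not expect serious obstacles.
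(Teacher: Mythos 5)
Your argument is correct and is essentially the paper's own proof: both locate a projection of trace at least $1-\varepsilon$ on which $x$ is small in operator norm (via spectral calculus and a Chebyshev-type trace estimate) and then apply the Leibniz rule together with Ringrose's automatic norm-continuity, arriving at the same constants, and the deduction of $\|\cdot\|_2$--$\text{\rm q}_M$ continuity into $\text{\rm q}\K_M$ is the same formal consequence. The only cosmetic difference is that the paper works with $x$ directly, setting $p=e\wedge e'$ where $e,e'$ are the spectral projections of $xx^*$ and $x^*x$ for $[0,\varepsilon^2/4]$ and bounding the two Leibniz terms $\delta(px)p$ and $\delta(p)xp$ separately, whereas you first split $x$ into self-adjoint parts and use the identity $p\delta(p)p=0$ to reduce to a single term.
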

\begin{proof}
By [R72], $\delta$ is automatically norm continuous and without loss of 
generality we may assume $\|\delta\|=1$. Let $\varepsilon >0$. Let $x\in (M)_1$ be so that $\|x\|_2\leq (\varepsilon/2)^{3/2}$. 
Denote by $e$ the spectral projection of $xx^*$ corresponding to $[0,\varepsilon^2/4]$. Then $e$ satisfies $\|ex\|\leq \varepsilon/2$ and 
$(1-e)xx^* \geq (\varepsilon/2)^2(1-e)$. Thus we have: 
$$
(\varepsilon/2)^3 \geq \|x\|_2^2=\|ex\|_2^2+\|(1-e)x\|_2^2 
$$
$$
\geq  \|(1-e)x\|_2^2=\tau((1-e)xx^*) \geq (\varepsilon/2)^2\tau(1-e). 
$$ 
This implies 
that $\tau(1-e) \leq \varepsilon/2$. Similarly, if $e'$ denotes the spectral projection of $x^*x$ corresponding to $[0,\varepsilon^2/4]$, 
we have $\tau(1-e')\leq \varepsilon/2$ and $\|xe'\|\leq \varepsilon/2$. Thus, if we denote $p=e\wedge e'$, then $\tau(1-p)\leq \varepsilon$ and 
$\|p\delta(x)p\|=\|\delta(px)p-\delta(p)xp\|\leq \varepsilon$. 

\end{proof}

\begin{lemma}\label{lemma:7.2}  Assume $T\in \B(L^2M)$ is so that 
$[T, M_0]\subset \text{\rm q}\K_M$ for some weakly dense $^*$-subalgebra $M_0\subset M$. Then we have: 

\vskip.05in 

$1^\circ$ $[T, M] \subset  \text{\rm q}\K_M$. 

\vskip.05in

$2^\circ$ If, in addition, $T=e$ is a projection and there exists a Haar unitary $u\in M$ such that $[e, u]\in \K(L^2M)$ 
with $eue$ having Fredholm index $\neq 0$ in $\B(e(L^2M))$, then $[T, M]\subset \text{\rm q}\K_M$, 
and the derivation $\delta_e: M \rightarrow \text{\rm q}\K_M$ defined by $\delta_e(x) =[e,x], x\in M$, 
is not inner, i.e., there exists no $K\in \text{\rm q}\K_M$ such that $\delta_e=\text{\rm ad}K$. 
\end{lemma}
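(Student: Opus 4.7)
My plan for Part $1^\circ$: The idea is to extend $[T,\cdot]$ from $M_0$ to $M$ by continuity using Kaplansky density. Given $x\in (M)_1$, Kaplansky density provides a bounded net $(x_\iota)\subset (M_0)_1$ with $x_\iota \to x$ in $\|\cdot\|_2$. Viewing $T$ as an element of the dual normal $M$-bimodule $\B(L^2M)$, Proposition~\ref{prop:6.2}.$1^\circ$ yields that both $Tx_\iota \to Tx$ and $x_\iota T \to xT$ in the $\text{\rm q}_M$-topology, so $[T,x_\iota]\to [T,x]$ in $\text{\rm q}_M$. Since $\|[T,x_\iota]\|\le 2\|T\|$ is uniform in $\iota$ and each $[T,x_\iota]\in \text{\rm q}\K_M$, the fact that $\text{\rm q}\K_M$ is $\text{\rm q}_M$-closed on bounded subsets (by definition, together with Proposition~\ref{prop:6.2}.$2^\circ$) forces $[T,x]\in \text{\rm q}\K_M$.

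For Part $2^\circ$, the inclusion $[e,M]\subset \text{\rm q}\K_M$ is inherited from Part $1^\circ$. For the non-innerness, suppose for contradiction that $\delta_e=\mathrm{ad}K$ with $K\in \text{\rm q}\K_M$. Since $\delta_e(x)^*=-\delta_e(x^*)$ forces $\delta_e=\mathrm{ad}(K^*)$ as well, replacing $K$ by $(K+K^*)/2$ we may assume $K=K^*$. Let $P:=e-K$. Then for every $x\in M$, $[P,x]=\delta_e(x)-[K,x]=0$, so $P\in M'=M^{op}$, and $P=P^*$. A direct expansion using $e^2=e$ gives $P(1-P)=eK+Ke-K-K^2$, and every summand lies in $\text{\rm q}\K_M$ since $\text{\rm q}\K_M$ is both an $M$- and an $M^{op}$-bimodule and $K^2=K-KP-PK$ upon substituting $e=P+K$ and using $eP=P+KP$, $Pe=P+PK$. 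Thus $P(1-P)\in M^{op}\cap \text{\rm q}\K_M$. Applying Lemma~\ref{lemma:6.6} with any diffuse $B\subset M$ (which exists since $M$ is diffuse), $\sigma=\mathrm{id}_B$, and $e_{\mathrm{Lem.}}=f_{\mathrm{Lem.}}=1$: the element $P(1-P)$ commutes with $B$, so $P(1-P)=(1-1)P(1-P)(1-1)=0$. Hence $P$ is a projection in $M^{op}$.

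Since $P\in M^{op}$ commutes with $u$, the operator $PuP=uP$ restricts to a unitary on $PL^2M$, so $\mathrm{ind}(PuP|_{PL^2M})=0$. The remaining (and delicate) step is to deduce $\mathrm{ind}(eue|_{eL^2M})=\mathrm{ind}(PuP|_{PL^2M})=0$, contradicting the hypothesis. Note that $K=e-P\in \text{\rm q}\K_M$ and, crucially, $[K,u]=[e,u]-[P,u]=[e,u]\in \K(L^2M)$. My plan is to use Proposition~\ref{prop:6.2}.$4^\circ$ applied to the separable subspace generated by $K$ and its iterated conjugates $u^j K u^{-j}$, $j\in\mathbb{Z}$, to obtain an increasing sequence of projections $q_n\in\mathcal{P}(M)$ with $q_n\to 1$ strongly and $q_n K q_n,\, q_n u^j K u^{-j} q_n\in \K(L^2M)$ for every $n,j$. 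In each corner $q_n\B(L^2M)q_n$, since $P$ commutes with $q_n$, the element $q_neq_n=Pq_n+q_nKq_n$ differs from the projection $Pq_n$ by a norm-compact operator, so a compact-perturbation argument yields that the compressed Fredholm index of $q_neq_n\cdot q_nuq_n\cdot q_neq_n$ on $q_neq_n L^2M$ equals that of $Pq_n u Pq_n$ on $Pq_n L^2M$, which is $0$.

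The main obstacle is the passage from these corner indices back to the global Fredholm index $\mathrm{ind}(eue|_{eL^2M})$, since $\text{\rm q}\K_M$-perturbations of Fredholm operators do not in general preserve the index. I expect the compactness of $[e,u]$ (not merely its membership in $\text{\rm q}\K_M$) to be decisive here: it ensures $eue-q_neue q_n$ is close to a compact operator as $n\to\infty$, so standard Riesz--Fredholm stability allows the index to be read off from the eventual corner behavior, yielding $\mathrm{ind}(eue|_{eL^2M})=0$ and thus the contradiction.
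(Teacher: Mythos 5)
Your $1^\circ$ is correct and is essentially the paper's argument (the paper quotes the automatic $\|\cdot\|_2$--$\text{\rm q}_M$ continuity of $\mathrm{ad}\,T$ from Theorem~\ref{thrm:7.1}; your combination of Kaplansky density with Proposition~\ref{prop:6.2}.$1^\circ$ and the $\text{\rm q}_M$-closedness of bounded parts of $\text{\rm q}\K_M$ is the same mechanism). In $2^\circ$, your reduction to ``$P=e-K$ is a projection in $M^{op}$'' is sound in outline, but the identity you invoke is circular as written: $eP=P+KP$ and $Pe=P+PK$ are each equivalent to $P^2=P$, which is exactly what you are trying to prove. This is repairable without circularity: $K^2=K(e-P)=Ke-KP\in\text{\rm q}\K_M$ because $\text{\rm q}\K_M$ is both an $M$- and an $M^{op}$-bimodule (and $P\in M^{op}$ is bounded), whence $P-P^2=eK+KP-K\in\text{\rm q}\K_M\cap M^{op}$, and Lemma~\ref{lemma:6.6} then gives $P^2=P$ as you intended.

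The genuine gap is the final step, which you yourself flag: passing from the existence of the projection $P\in M^{op}$ with $e-P\in\text{\rm q}\K_M$ to $\mathrm{ind}(eue|_{eL^2M})=0$. The corner argument does not close it. First, $q_neq_n$ is not a projection (nothing forces $e$ to commute with $q_n$), so ``the compressed Fredholm index of $q_neq_n\cdot q_nuq_n\cdot q_neq_n$ on $q_neq_nL^2M$'' is not well defined as stated. More seriously, the passage from corners back to the global index fails: $eue-q_neueq_n=(1-q_n)eue+q_neue(1-q_n)$, and while $\tau(1-q_n)\to0$, the projections $1-q_n$ have operator norm $1$, so these terms are neither small in norm nor close to compact operators; compactness of $[e,u]$ controls the commutator, not this corner defect, so Riesz--Fredholm stability does not apply. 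Indeed, the Fredholm index is \emph{not} stable under $\text{\rm q}\K_M$-perturbations -- that instability is precisely why the statement has content -- so some genuinely global input is needed. The paper supplies it differently: by Brown--Douglas--Fillmore, the nonzero index of $eue$ allows one to replace the pair $(u,e)$, up to unitary conjugation modulo $\K(L^2M)$, by $(u,f)$ with $f$ the Hardy projection onto $\ell^2\mathbb{Z}_+\subset\ell^2\mathbb{Z}=L^2(\{u\}'')$; if $\mathrm{ad}f=\mathrm{ad}K'$ with $K'\in\text{\rm q}\K_M$, then $f-K'\in M^{op}$, i.e.\ $f(\hat{y})=K'(\hat{y})+\widehat{yx_0}$ for some $x_0\in M$, and evaluating at $\hat{u}^{\,n}$ as $n\to\pm\infty$, where $\|K'(\hat{u}^{\,n})\|\to0$ in the appropriate sense by the definition of $\text{\rm q}\K_M$, forces $\|x_0\|_2=0$ and $\|x_0\|_2=1$ simultaneously. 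Your scheme would need an analogous device (or a reduction to such a model) to transfer the index; as it stands, that step is missing.
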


\begin{proof} $1^\circ$ By Theorem~\ref{thrm:7.1}, the derivation $\delta=\text{\rm ad}T: M \rightarrow \B(L^2M)$ 
is $\| \cdot \|_2-\text{\rm q}_M$ continuous. Since $M_0$ is $\| \cdot \|_2$-dense in $M$, $[T, M_0]\subset \text{\rm q}\K_M$ 
 and q$\K_M$ is q$_M$-closed in $\B(L^2M)$, it follows that 
$[T, M]\subset \text{\rm q}\K_M$. 

\vskip.05in 

$2^\circ$ Let $A=\{u\}''$.  Since $u$ is a Haar unitary, one can view the restriction of the action of $u$ on $L^2A$ as the bilateral shift on $\ell^2\integers$. 
Denote $u=v\oplus w$ where $v$ is  the restriction of $u$ to $L^2A=\ell^2\integers$ and $w$ its restriction to $L^2M\ominus L^2A$. 
By [BDF73], there exist compact operators $K_0, K_1\in \K(L^2M)$ such that $(u+K_0, e+K_1)$ are unitary conjugate to $(u, f)$, 
where $f$ is the orthogonal projection of $L^2M$ onto $\ell^2\integers_+\subset \ell^2\integers=L^2A$. 

Thus, if  $\delta_e=\text{\rm ad}K$ for some $K\in \K(M, L^1M)$, then $\delta_f = \text{\rm ad}f=\text{\rm ad}(e+K_1)=\text{\rm ad}(K')$  
with $K'=K+K_1 \in \K(M, L^1M)$. This implies $f-K'\in M'=M^{op}$, so there must exist $x_0\in M$ such that 
$f(\hat{y})=K'(\hat{y})+\hat{yx_0}$, for all $y\in M$. Since $\lim_{|n| \rightarrow \infty} K'(\hat{u^n})=0$ and $f(\hat{u^n})$ is equal to $\hat{u^n}$ 
for $n>0$ and is equal to $0$ for $n<0$, this shows on the one hand that $0=\lim_{n\rightarrow \infty}\|f(\hat{u^{-n}})\|_2 = \|u^{-n}x_0\|_2=\|x_0\|_2$, 
on the other hand $1=\lim_{n\rightarrow \infty}\|f(\hat{u^n})\|_2 = \|u^nx_0\|_2= \|x_0\|_2$, a contradiction. 

\end{proof}

\begin{thrm}\label{thrm:7.3}
For any separable diffuse finite von Neumann algebra $M$, there exists a non inner derivations of $M$ into $\text{\rm q}\K_M$.
\end{thrm}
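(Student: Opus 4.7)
The plan is to apply Lemma~\ref{lemma:7.2}, part $2^\circ$, to produce the non-inner derivation. For this, it suffices to construct a projection $e \in \B(L^2 M)$ together with a Haar unitary $u \in M$ such that: (i) $[e, x] \in \text{\rm q}\K_M$ for every $x$ in some weakly dense $^*$-subalgebra $M_0 \subset M$, and (ii) $[e, u] \in \K(L^2 M)$ with $eue$ of nonzero Fredholm index on $e(L^2 M)$.

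Since $M$ is separable and diffuse, I would pick a Haar unitary $u \in M$ and set $A = W^*(u) \cong L\mathbb{Z}$. Let $e_A : L^2 M \to L^2 A$ be the Jones projection, and identify $L^2 A$ with $\ell^2 \mathbb{Z}$ via Fourier, so that left multiplication by $u$ on $L^2 A$ becomes the bilateral shift $S$. Let $e_+$ be the projection of $\ell^2 \mathbb{Z}$ onto $\ell^2 \mathbb{Z}_+$, and define $e := e_+ e_A \in \B(L^2 M)$. Since $u \in A$ commutes with $e_A$ and $[e_+, S]$ is rank-one, we get $[e, u] = [e_+, S]\, e_A$ rank-one (hence compact); moreover $eue$ restricted to $e(L^2 M) = \ell^2 \mathbb{Z}_+$ is the unilateral shift, of Fredholm index $-1 \neq 0$, establishing (ii).

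For (i) in the case $x \in A$, write $x = T_f$ for some $f \in L^\infty(\mathbb{T})$, so that $[e, x] = [e_+, T_f]\, e_A$. By Lusin's theorem, for every $\epsilon > 0$ there is a projection $p_\epsilon = \mathbf{1}_{E_\epsilon}(u) \in \mathcal{P}(A)$ with $\tau(1 - p_\epsilon) < \epsilon$ such that $f|_{E_\epsilon}$ coincides with the restriction of a continuous function $\tilde f \in C(\mathbb{T})$. Since $p_\epsilon$ and $T_f$ commute as multiplication operators, $p_\epsilon [e_+, T_f] p_\epsilon = p_\epsilon [e_+, T_{\tilde f}] p_\epsilon$, which is compact by Hartman's theorem on compactness of Hankel operators with continuous symbol. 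Hence $[e, x] \in \text{\rm q}\K_M$ for every $x \in A$.

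The main obstacle is enlarging $A$ to a weakly dense $^*$-subalgebra $M_0 \subset M$. For $y \in M \ominus A$, the commutator $[e, y]$ acquires nontrivial off-diagonal blocks interchanging $L^2 A$ and $L^2 A^\perp$, which the naive projection $e_+ e_A$ does not directly control. The fix is to refine $e$ using the decomposition $L^2 M \cong L^2 A \otimes H$ of $L^2 M$ as a right $A$-module (guaranteed by spectral theory, since $L^2 M$ is a normal right $L^\infty(\mathbb{T})$-module), replacing $e$ by $e_+ \otimes \tilde Q$ for a finite-rank projection $\tilde Q \in \B(H)$ containing the vacuum vector corresponding to $\hat 1 \in L^2 A$. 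The finite-rank condition on $\tilde Q$ preserves (ii), with Fredholm index equal to $-\operatorname{rank}(\tilde Q)$, while the additional freedom allows a Lusin-type cutting argument applied to the operator-valued Fourier symbol $z \mapsto x_z \in \B(H)$ of elements $x \in M$, exploiting also the fact that $\text{\rm q}\K_M$ is closed under both the $M$- and the $M^{\rm op}$-bimodule structure. This produces a weakly dense $^*$-subalgebra $M_0 \subset M$ with $[e, M_0] \subset \text{\rm q}\K_M$, at which point Lemma~\ref{lemma:7.2}, part $2^\circ$, applies and completes the proof.
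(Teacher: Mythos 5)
The reduction to Lemma~\ref{lemma:7.2}.$2^\circ$ is legitimate, and the pieces you actually verify are fine: for $e=e_+e_A$ the commutator $[e,u]$ is rank one, $eue$ is the unilateral shift of index $-1$, and your Lusin/Hartman argument does give $[e,x]\in\text{\rm q}\K_M$ for all $x\in A=W^*(u)$ (so the abelian case is settled). But the hypothesis of Lemma~\ref{lemma:7.2}.$2^\circ$ requires $[e,M_0]\subset\text{\rm q}\K_M$ for a weakly dense $^*$-subalgebra of $M$, not of $A$, and that step --- which you yourself flag as ``the main obstacle'' --- is exactly where the content of the theorem lies; your final paragraph only asserts that a modified $e=e_+\otimes\tilde Q$ works, and the assertion does not hold up. Concretely: (a) the identification $L^2M\cong L^2A\otimes H$ is as right $A$-modules, so left multiplication by a generic $x\in M$ does not respect the tensor factorization; writing $P=1\otimes\tilde Q$, the corner $P[e,x](1-P)=(e_+\otimes\tilde Q)\,x\,(1\otimes(1-\tilde Q))$ is a compression of $x$ with no Hankel structure whatsoever, and cutting by projections $p\in\mathcal P(M)$ of trace close to $1$ (which commute with neither $e$ nor $P$) gives no mechanism for making it compact; an operator-valued Hartman argument only ever reaches the diagonal corner. (b) Once $\tilde Q$ is strictly larger than the rank-one projection onto $\hat 1$, $u$ need not commute with $1\otimes\tilde Q$, so neither $[e,u]\in\K(L^2M)$ nor the index count $-\mathrm{rank}\,\tilde Q$ is justified. (c) The difficulty is not bookkeeping: membership in $\text{\rm q}\K_M$ forces $\lim_n\|K\hat w_n\|_1=0$ for unitaries $w_n\to0$ weakly (this is how Lemma~\ref{lemma:6.6} is proved), and testing $e=e_+e_A$ in $M=L\mathbb{F}_2$ with $A=W^*(u_a)$, $w_m=u_b^*u_a^m$ gives $[e,u_b]\hat w_m=\hat u_a^m$, of $\| \cdot \|_1$-norm one; any repaired $e\geq e_+e_A$ must then asymptotically contain or annihilate infinitely many mutually orthogonal ``coset directions'' simultaneously for all elements of a dense subalgebra, which a fixed finite-rank $\tilde Q$ cannot visibly achieve. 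Note finally that success would produce a projection commuting with all of $M$ modulo $\text{\rm q}\K_M$ and carrying a nonzero Fredholm-type obstruction; this is a considerably stronger structural statement than the theorem (plausible for group factors via boundary-type diagonal projections, in the spirit of Proposition~\ref{prop:7.4}, but entirely unclear for, say, the hyperfinite II$_1$ factor), and nothing in your sketch, or in the paper, provides it in general.

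For comparison, the paper proves the theorem by a softer mechanism that avoids any index obstruction. Using Arveson's quasicentral approximate unit one chooses compacts $K_n$ with $\|K_n\|\leq1$, $\|[K_n,x_k]\|\leq 2^{-n}$ for the first $n$ terms of a weakly dense sequence, and $\langle K_n\hat1,\hat1\rangle>1/2$; these are conjugated by the right multiplications $Ju_{n_{2i}}J$ along a carefully extracted subsequence of unitaries tending weakly to $0$, and one sets $T=\sum_iJu_{n_{2i}}JK_{2i}Ju_{n_{2i}}^*J$. Then $[T,x_j]$ is compact for each $j$, so $\text{\rm ad}T$ maps $M$ into $\text{\rm q}\K_M$ by Lemma~\ref{lemma:7.2}.$1^\circ$, and non-innerness follows from an oscillation argument: if $\text{\rm ad}T=\text{\rm ad}S$ with $S\in\text{\rm q}\K_M$, then $\langle u_{n_j}Tu_{n_j}^*\hat1,\hat1\rangle$ would have to converge (via Lemma~\ref{lemma:5.2} and Theorem~\ref{thrm:6.5}), whereas by construction it stays above $1/3$ along even indices and below $1/6$ along odd ones. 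To salvage your route you would first have to establish, for an arbitrary separable diffuse finite $M$, the existence of the projection $e$ with $[e,M_0]\subset\text{\rm q}\K_M$ for a weakly dense $M_0\subset M$; as written, the proposal has a genuine gap at precisely that central step.
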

\begin{proof}
Since $M$ is separable, we can fix a weakly dense sequence of $x_n$ in $M$. By \cite{Arv77}, the closed ideal $\K(L^2M)$ of $\B(L^2M)$ has  a quasicentral approximate unit. In particular, for any $\epsilon > 0$ and any operators $T_1, T_1, \dots, T_k \in \B(L^2M)$, we can find an operator $K \in \K(L^2M)_+$ from such a quasicentral approximate unit such that $\|K \| \leq 1$  and  $\|[K, T_i]\| < \epsilon$ for all $1 \leq i \leq k$. Moreover, since such a quasicentral approximate unit weakly tends to the identity, for any $0< \alpha < 1$ such an operator $K$ can be chosen to satisfy $\langle K \hat{1}, \hat{1} \rangle > \alpha$. Thus, we can find a sequence of operators $K_n$ in $\K(L^2M)$ with $\|K_n \| \leq 1$ for all $n\geq 1$ such that $\|[ K_n,x_k]\| < 2^{-n}$ for all $1 \leq k \leq n$ and  $\langle K_n \hat{1}, \hat{1} \rangle > 1/2$ for all $n\geq 1$.

Now fix a sequence of unitaries $u_n$ in $M$ that are weakly tending to 0. We claim there exists a  subsequence $(u_{n_i})_{i=1}^\infty$ such that 
\begin{enumerate}
\item
$\left\|   \sum_{i=1}^n Ju_{n_i}J K_i Ju_{n_i}^* J\right\| <2$ for  all $n \geq 1$;
\item
$|\langle K_i u_{n_j}^*u_{n_i} \hat{1}, u_{n_j}^*u_{n_i} \hat{1} \rangle| < 2^{-i-1}$ for all $i \neq j$.
\end{enumerate}
We construct such a subsequence inductively. First,  let $u_{n_1} = u_1$. Next, assume for some $k \geq 1$ we have found $u_{n_1}, u_{n_2}, \dots, u_{n_k}$ such that the above condition 1 occurs for all $1\leq n \leq k$ and condition 2 occurs for all $1\leq i,j \leq k$ with $i\neq j$. Then notice that for any compact operators $T,S \in K(L^2M)$ and any sequence of unitaries $v_n$ in $M$  converging weakly to 0 we have $\| T + v_n S v_n^*\| \to \max\{\| T\| ,\|S\|\}$ as $n$ tends to infinity. Since $   \sum_{i=1}^k Ju_{n_i}J K_i Ju_{n_i}^* J$  and $K_{k+1}$ are compact operators of norm less than 2, it follows there is an $N_1$ such that  for all $n \geq N_1$ 
$$\left\| Ju_{n}J K_{k+1} Ju_{n}^* J +  \sum_{i=1}^k Ju_{n_i}J K_i Ju_{n_i}^* J\right\| < 2.$$

Next, note that for each fixed $1\leq i \leq k$  we have that $u_nu_{n_i} K_{k+1} u_{n_i}^* u_n^*$ converges weakly to 0 as $n$ tends to infinity. Thus, there is an $N_2$ such that for all $1\leq i \leq k$ and all  $n\geq N_2$ 
$$
|\langle K_{k+1} u_{n_i}^*u_{n} \hat{1}, u_{n_i}^*u_{n} \hat{1} \rangle|
=
|\langle u_nu_{n_i} K_{k+1} u_{n_i}^* u_n\hat{1}, \hat{1}
\rangle|
<
2^{-k-2}.
$$
Similarly, for each fixed $1\leq i \leq k$ we have that $u_n K_i u_n^*$ converges weakly to 0 as $n$ tends to infinity. Hence, there is an $N_3$ such that for all $1\leq i \leq k$ and all $n\geq N_3$ 
$$
|\langle K_{i} u_{n}^*u_{n_i} \hat{1}, u_{n}^*u_{n_i} \hat{1} \rangle|
=
|\langle (u_{n} K_{i} u_{n}^*) u_{n_i}\hat{1}, u_{n_i}\hat{1}
\rangle|
<
2^{-i-1}.
$$
If we take $n_{k+1} = \max\{N_1, N_2, N_3\}$, then the terms $u_{n_1}, u_{n_2}, \dots, u_{n_{k+1}}$ will satisfy the above condition 1 for  all $1\leq n \leq k+1$ and condition 2 occurs for all $1\leq i,j \leq k+1$ with $i\neq j$. By induction, it follows that the desired subsequence $(u_{n_i})_{i=1}^\infty$  exists.

Now we define an operator $T$ by letting
$$T = \sum_{i=1}^\infty Ju_{n_{2i}}J K_{2i} Ju_{n_{2i}}^* J. $$
Note because of how we chose the unitaries $u_{n_i}$ that  $T$ will indeed be a well-defined operator in $\B(L^2M)$ with $\|T\| \leq 2$. Moreover, for any $x_j$ from our weakly dense sequence of $M$ we have 

$$[T, x_j] = 
\sum_{i=1}^\infty [Ju_{n_{2i}}J K_{2i} Ju_{n_{2i}}^*J, x_j]
=
\sum_{i=1}^\infty Ju_{n_{2i}}J [K_{2i},x_j] Ju_{n_{2i}}^* J.$$
Each summand $Ju_{n_{2i}}J [K_{2i},x_j] Ju_{n_{2i}}^* J$ in this series is a compact operator and, because of how we chose the operators $K_n$, for all  $i \geq j/2$ we have 
$$
\| Ju_{n_{2i}}J [K_{2i},x_j] Ju_{n_{2i}}^* J \|
=
\|[K_{2i},x_j] \| 
\leq 
2^{-2i}.
$$
Thus, this is a $\|\cdot\|$-norm convergent series of compact operators, and in turn $[T,x_j]$ is a compact for each $x_j$.  By Lemma~\ref{lemma:7.2}, $\text{\rm ad}T$ is a derivation of $M$ into $\text{\rm q}\K_M$. 

We claim, however, that $\text{\rm ad}T$ is not inner. Otherwise, assume for sake of contradiction there is an $S \in \text{\rm q}\K_M$ such that $\text{\rm ad}S =\text{\rm ad}T$. Take any sequence of unitaries $v_n$ in $M$ that weakly converge to 0. Then we note that since $(T-S)$ commutes with $M$
\begin{equation}
\begin{split}
\langle v_n T v_n^* \hat{1}, \hat{1} \rangle
&=
\langle v_n (T-S) v_n^* \hat{1}, \hat{1} \rangle 
+\langle v_n Sv_n ^* \hat{1}, \hat{1} \rangle\\
&=
\langle (T-S) \hat{1}, \hat{1} \rangle 
+\langle v_n Sv_n ^* \hat{1}, \hat{1} \rangle.\\
\end{split}
\end{equation}
Using lemma~\ref{lemma:5.2}, $\langle v_n Sv_n ^* \hat{1}, \hat{1} \rangle$ must converge to 0 as $n$ tends to infinity. Thus, we observe that $\langle v_n T v_n^* \hat{1}, \hat{1} \rangle$ must converge as $n$ tends to infinity. 

It follows then that $\langle u_{n_j} T u_{n_j}^* \hat{1}, \hat{1} \rangle$ converges as $j$ tends to infinity. However, for even terms of this sequence, we notice that 

\begin{equation*}
\begin{split}
|\langle u_{n_{2j}} T u_{n_{2j}}^* \hat{1},
 \hat{1} \rangle|
 &=
  \left|\sum_{i=1}^\infty\langle u_{n_{2j}}  Ju_{n_{2i}}J K_{2i} Ju_{n_{2i}}^* J u_{n_{2j}}^* \hat{1},
 \hat{1} \rangle \right|
 \\
 &=
  \left|\sum_{i=1}^\infty\langle K_{2i}  u_{n_{2j}}^* u_{n_{2i}}\hat{1},
 u_{n_{2j}}^* u_{n_{2i}}\hat{1} \rangle \right|
 \\
  &\geq
 |\langle K_{2j}  \hat{1},
\hat{1} \rangle |
 -\sum_{1\leq i\neq j} \left|\langle K_{2i}  u_{n_{2j}}^* u_{n_{2i}}\hat{1},
 u_{n_{2j}}^* u_{n_{2i}}\hat{1} \rangle \right|.
 \\
\end{split}
\end{equation*}
Because of how we chose the operators $K_n$, we have $|\langle K_{2j}  \hat{1},
\hat{1} \rangle | > 1/2$, whereas by construction  $\left|\langle K_{2i}  u_{n_{2j}}^* u_{n_{2i}}\hat{1},
 u_{n_{2j}}^* u_{n_{2i}}\hat{1} \rangle \right| < 2^{-2i-1}$ for all $i \neq j$. We then get a lower bound 

$$
|\langle u_{n_{2j}} T u_{n_{2j}}^* \hat{1},
 \hat{1} \rangle|
 >
 1/2
 -\sum_{1 \leq i\neq j} 2^{-2i-1}
 \geq 1/3.
$$
Conversely, for any odd term of this sequence
 \begin{equation}
\begin{split}
|\langle u_{n_{2j+1}} T u_{n_{2j+1}}^* \hat{1},
 \hat{1} \rangle|
 &=
  \left|\sum_{i=1}^\infty\langle u_{n_{2j+1}}  Ju_{n_{2i}}J K_{2i} Ju_{n_{2i}}^* J u_{n_{2j+1}}^* \hat{1},
 \hat{1} \rangle \right|
 \\
 &=
  \left|\sum_{i=1}^\infty\langle K_{2i}  u_{n_{2j+1}}^* u_{n_{2j}}\hat{1},
 u_{n_{2j+1}}^* u_{n_{2i}}\hat{1} \rangle \right|
 \\
  &\leq
\sum_{i=1}^\infty \left|\langle K_{2i}  u_{n_{2j+1}}^* u_{n_{2i}}\hat{1},
 u_{n_{2j+1}}^* u_{n_{2i}}\hat{1} \rangle \right|.
 \\
\end{split}
\end{equation}
Again, using that $\left|\langle K_{2i}  u_{n_{2j+1}}^* u_{n_{2i}}\hat{1},
 u_{n_{2j+1}}^* u_{n_{2i}}\hat{1} \rangle \right|< 2^{-2i-1}$ we get a bound 
$$
|\langle u_{n_{2j+1}} T u_{n_{2j+1}}^* \hat{1},
 \hat{1} \rangle|
 <
\sum_{i=1}^\infty 2^{-2i-1}
= 1/6.
$$
It follows then that the sequence $\langle u_{n_k} T u_{n_k}^* \hat{1}, \hat{1} \rangle$ does not converge. Hence, by contradiction, $\text{\rm ad}T$ must be a non inner derivation.

\end{proof}

\begin{prop}\label{prop:7.4}
 Let $\Gamma$ be a countable group, set $M=L\Gamma$ and let $f\in \ell^\infty\Gamma$ be so that $_gf - f \in c_0(\Gamma)$, $\forall g\in \Gamma$. Denote $T_f\in \B(L^2M)$ 
the diagonal operator corresponding to $f$. 
\vskip.05in

$1^\circ$ We have $[T_f, M] \subset \text{\rm q}\K_M$, and  thus $\delta_f:=\text{\rm ad}T_f$ 
defines a derivation of $M$ into $\text{\rm q}\K_M$. 
\vskip.05in

$2^\circ$ If $f\not\in \mathbb C + c_0(\Gamma)$,  
then the 
derivation $\delta_f$ is outer, i.e,  
there exists no $K\in \text{\rm q}\K_M$ such that $\delta_f=\text{\rm ad}K$. 
\end{prop}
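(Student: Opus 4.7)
For $1^\circ$ my plan is to reduce to a direct computation of $[T_f, u_g]$ on the group basis $\{\hat{u}_h\}_{h \in \Gamma}$ of $L^2M$ and then invoke Lemma~\ref{lemma:7.2}. With the convention $({}_g f)(h) = f(g^{-1}h)$, a one-line verification gives $u_g T_f u_g^{*} = T_{{}_g f}$, hence
\[
[T_f, u_g] = (T_f - T_{{}_g f})\, u_g = T_{f - {}_g f}\, u_g.
\]
The hypothesis $f - {}_g f \in c_0(\Gamma)$ then forces $T_{f - {}_g f}$ to be a compact diagonal operator, so $[T_f, u_g] \in \K(L^2M) \subset \text{\rm q}\K_M$ for every $g \in \Gamma$. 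Since $\mathbb{C}\Gamma$ is a weakly dense $^*$-subalgebra of $M$, Lemma~\ref{lemma:7.2}.$1^\circ$ extends this to $[T_f, M] \subset \text{\rm q}\K_M$, so $\delta_f = \text{\rm ad}\,T_f$ defines a derivation of $M$ into $\text{\rm q}\K_M$.

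For $2^\circ$ I will argue by contradiction: suppose there exists $K \in \text{\rm q}\K_M$ with $\delta_f = \text{\rm ad}\,K$. Then $T_f - K$ commutes with $M$ and hence belongs to $M' = JMJ$, so $T_f - K = JyJ$ for a unique $y \in M$. Evaluating diagonal matrix coefficients on $\{\hat{u}_h\}$ gives $\langle T_f \hat{u}_h, \hat{u}_h \rangle = f(h)$ together with
\[
\langle JyJ \hat{u}_h, \hat{u}_h\rangle = \langle \widehat{u_h y^*}, \hat{u}_h\rangle = \tau(y^*),
\]
which is independent of $h$. Setting $c := \overline{\tau(y)}$, this produces the pointwise identity $f(h) - c = \langle K\hat{u}_h, \hat{u}_h\rangle$ for every $h \in \Gamma$. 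If I can establish that the right-hand side tends to $0$ along any sequence $h_k \to \infty$ in $\Gamma$ (i.e.\ eventually leaving each finite subset), then $f - c \in c_0(\Gamma)$, contradicting $f \notin \mathbb{C} + c_0(\Gamma)$ and finishing the argument.

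The vanishing $\langle K\hat{u}_{h_k}, \hat{u}_{h_k}\rangle \to 0$ is the one non-routine step and is where I expect the real work to lie. My plan is to exploit the identification $\text{\rm q}\K_M = \K_2$ from Theorem~\ref{thrm:6.5}: choose a bounded sequence $(K_n) \subset \K(L^2M)$ with $\vertiii{K - K_n}_2 \to 0$. By Lemma~\ref{lemma:3.4} applied at $p = 2$ (so $q = \infty$), each vector functional $\omega_{\hat{u}_h, \hat{u}_h}$ lies in $\B^*(2)$ with $\|\omega_{\hat{u}_h, \hat{u}_h}\|_{\B^*(2)} = \|u_h\|^2 = 1$ uniformly in $h$; consequently
\[
\bigl| \langle (K - K_n)\hat{u}_h, \hat{u}_h\rangle \bigr| \;\leq\; \vertiii{K - K_n}_2 \qquad \forall h \in \Gamma.
\]
For each fixed $n$, compactness of $K_n$ together with the weak convergence $\hat{u}_{h_k} \to 0$ in $L^2M$ gives $\langle K_n \hat{u}_{h_k}, \hat{u}_{h_k}\rangle \to 0$, and a standard $\varepsilon/2$ argument then delivers the required vanishing, completing the proof.
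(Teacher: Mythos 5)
Your proposal is correct and follows essentially the same route as the paper: part $1^\circ$ is the same computation plus Lemma~\ref{lemma:7.2}, and for part $2^\circ$ you likewise use $T_f-K\in M'=M^{op}$ (constant diagonal coefficients) together with Theorem~\ref{thrm:6.5} and the uniform bound on diagonal vector functionals to show the diagonal of $K$ lies in $c_0(\Gamma)$. The only cosmetic difference is that the paper packages this last step via the conditional expectation $\E_0$ onto $\ell^\infty\Gamma$ and Lemma~\ref{lemma:4.7}, whereas you invoke Lemma~\ref{lemma:3.4} at $p=2$ pointwise with an $\varepsilon/2$ argument, which is the same content.
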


\begin{proof} $1^\circ$ The condition $_gf - f \in c_0(\Gamma)$, $\forall g\in \Gamma$, amounts to $[M_0, T_f]\subset \K(L^2M)\subset \text{\rm q}\K_M$, where $M_0=\complex\Gamma$. 
Since $M_0$ is a weakly  dense $^*$-subalgebra of $M$, by Lemma~\ref{lemma:7.2} it follows that $[M, T_f]\subset \text{\rm q}\K_M$. 

$2^\circ$  
Assume there exists $K\in \text{\rm q}\K_M$ such that $\text{\rm ad}(K)=\text{\rm ad}(T_f)$ on $M$. We let $\E_0: \B(L^2M) \to \ell^\infty \Gamma$ denote the conditional expectation to the diagonal operators given by $\E_0(T)(g) = \langle T \hat{u}_g, \hat{u}_g \rangle$. Notice that $\E_0$ implements the canonical trace on both $L\Gamma$ and $R\Gamma$. By Lemma~\ref{lemma:4.7} if $p \geq 2$, then we have $\| \E_0(T) \| \leq \vertiii{ T }_p$, and so from Theorem~\ref{thrm:6.5} it follows that $\E_0(K) \in c_0(\Gamma)$.

Since $K-T_f\in M'=M^{op}$, we then have $\E_0(K) - f = \E_0( K - T_f) \in \mathbb C$, contradicting the fact that $f \not\in \mathbb C + c_0(\Gamma)$.

\end{proof}

\begin{cor} If $\Gamma$ is any infinite group, then there exists a non-inner derivation of $M = L\Gamma$ into $\text{\rm q}\K_M$ of the form $\delta_f  = \text{\rm ad}T_f$ where $f \in \ell^\infty\Gamma$ is given as in Proposition~\ref{prop:7.4}.

\end{cor}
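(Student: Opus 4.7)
The plan is to reduce the corollary to Proposition~\ref{prop:7.4}: it suffices to construct $f\in\ell^\infty(\Gamma)$ satisfying both $_gf-f\in c_0(\Gamma)$ for every $g\in\Gamma$ and $f\notin\mathbb{C}+c_0(\Gamma)$. Then $\delta_f=\text{\rm ad}T_f$ will be the desired non-inner derivation of $L\Gamma$ into $\text{\rm q}\K_M$.

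Taking $\Gamma$ countable (as elsewhere in the paper), I would equip it with a proper, symmetric, subadditive, integer-valued length function $|\cdot|$. A standard construction: enumerate $\Gamma=\{g_1=e,g_2,\ldots\}$ and let $|g|$ be the minimal weight of a representation of $g$ as a product of elements of $\{g_i^{\pm 1}\}_i$ with weights $w(g_i^{\pm 1})=i$. Properness gives $|g|\to\infty$ as $g$ leaves every finite subset of $\Gamma$, and subadditivity together with symmetry yields $\bigl||g^{-1}h|-|h|\bigr|\le|g|$ for all $g,h$. Let $D=\{|g|:g\in\Gamma\}\subset\mathbb{Z}_{\ge 0}$; since $\Gamma$ is infinite, $D$ is unbounded, so one can extract a subsequence $a_0<a_1<a_2<\cdots$ in $D$ with $a_{k+1}-a_k\ge k+1$. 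Define $\phi\colon[0,\infty)\to[-1,1]$ to be the piecewise linear function with $\phi(a_k)=(-1)^k$ (and constant on $[0,a_0]$), and set $f(g):=\phi(|g|)$.

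To verify the two properties, note the slope of $\phi$ on $[a_k,a_{k+1}]$ is at most $2/(k+1)$. Hence for fixed $g$ and $h$ with $|h|\in[a_k,a_{k+1}]$ for some $k\gg|g|$, the translate $|g^{-1}h|\in[|h|-|g|,|h|+|g|]$ lies in a bounded number of adjacent $\phi$-intervals of slope at most $2/k$, giving $|\phi(|g^{-1}h|)-\phi(|h|)|\le C|g|/k\to 0$, so $_gf-f\in c_0(\Gamma)$. On the other hand, for each $k$ there exists $g_k\in\Gamma$ with $|g_k|=a_k$ (since $a_k\in D$), and then $f(g_k)=(-1)^k$ along the sequence $g_k\to\infty$, precluding convergence to any scalar; hence $f\notin\mathbb{C}+c_0(\Gamma)$. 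Proposition~\ref{prop:7.4} then delivers the non-inner derivation $\delta_f$.

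The main technical subtlety is that the range $D$ of $|\cdot|$ may be sparse in $\mathbb{Z}_{\ge 0}$, so one cannot just take an arbitrary bounded slowly-varying function of $|g|$ and hope it fails to converge along $\Gamma$; the construction sidesteps this by placing the extrema of $\phi$ at points of $D$ itself and by spacing them widely enough ($a_{k+1}-a_k\ge k+1$) that the fixed-$g$ translation error is absorbed by the vanishing slope of $\phi$.
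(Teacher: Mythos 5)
Your construction is correct, and it proves exactly what the corollary needs (for countable $\Gamma$, which is the hypothesis of Proposition~\ref{prop:7.4} that the corollary invokes): the weighted word-length $|\cdot|$ is a proper, symmetric, subadditive length function, the spacing $a_{k+1}-a_k\geq k+1$ makes the slope of $\phi$ near $a_k$ of order $1/k$ so that $\bigl||g^{-1}h|-|h|\bigr|\leq |g|$ forces ${}_gf-f\in c_0(\Gamma)$, and placing the extrema of $\phi$ at points of the (possibly sparse) range of $|\cdot|$ guarantees that $f$ oscillates along a sequence leaving every finite set, hence $f\notin\mathbb{C}+c_0(\Gamma)$; Proposition~\ref{prop:7.4} then gives the non-inner derivation. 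This is, however, a genuinely different route from the paper's. The paper's proof is a one-line reduction to the mechanism of Theorem~\ref{thrm:7.3}: start from an asymptotically $\Gamma$-invariant approximate identity in $c_0(\Gamma)$ (the commutative analogue of Arveson's quasicentral approximate unit), translate its terms along a sparse sequence of group elements and sum, so that the resulting $\ell^\infty$-function is asymptotically invariant yet oscillates along the translating sequence. What the paper's approach buys is uniformity: the same bookkeeping as in the operator-valued construction of Theorem~\ref{thrm:7.3}, with no auxiliary metric structure on $\Gamma$. What your approach buys is a completely explicit and self-contained example --- a ``radial'' slowly oscillating (Higson-type) function with respect to a proper length function --- avoiding the sparse-sum argument altogether. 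One caveat worth flagging: the corollary is phrased for ``any infinite group,'' while your argument (like Proposition~\ref{prop:7.4} itself, and in effect also the paper's sketch, which requires enumerating/diagonalizing) uses countability of $\Gamma$; since a proper length function exists only on countable groups, you should state that restriction explicitly rather than as an aside.
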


\begin{proof}
From an argument very similar to the one used in Theorem~\ref{thrm:7.3} it follows that there always exist $f \in \ell^\infty \Gamma$ so that $_gf - f \in c_0(\Gamma)$ for all $g \in \Gamma$, but such that $f \not\in \mathbb C + c_0(\Gamma)$. One simply starts with an asymptotically $\Gamma$-invariant approximate identity in $c(\Gamma)$ and proceeds as in the proof of Theorem~\ref{thrm:7.3}. 

\end{proof}

\begin{lemma}\label{lemma:7.6}  Assume $\delta: M \rightarrow \text{\rm q}\K_M$ is implemented by $T\in \B(L^2M)$. 
If $K_n \in \text{\rm q}\K_M$ are so that $\|K_n \|\leq \|T\|$, $\forall n$, and $\lim_n \text{\rm q}_M([K_n, x], \delta(x))=0$ for all $x$ in some weakly dense 
$^*$-subalgebra $M_0$ of $M$, then this limit holds true for all $x\in M$.

\end{lemma}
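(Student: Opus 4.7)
The strategy is to interpolate between a generic $x \in M$ and a $\| \cdot \|_2$-approximant $x_m \in M_0$, then split the triangle
$$
\text{\rm q}_M([K_n,x],\delta(x)) \leq \text{\rm q}_M([K_n,x-x_m],0) + \text{\rm q}_M([K_n,x_m],\delta(x_m)) + \text{\rm q}_M(\delta(x_m),\delta(x))
$$
into three pieces. The middle piece tends to $0$ as $n\to\infty$ by hypothesis once $x_m$ is fixed, the third piece is small for $m$ large by the automatic continuity in Theorem~\ref{thrm:7.1}, and the first piece will be controlled uniformly in $n$ using the bound $\|K_n\|\leq \|T\|$. First I would verify the triangle inequality for $\text{\rm q}_M$: given $p_1,p_2$ almost realizing the two sides, the projection $p=p_1\wedge p_2$ satisfies $\tau(1-p)\leq \tau(1-p_1)+\tau(1-p_2)$ and $\|p(A-C)p\|\leq \|p_1(A-B)p_1\|+\|p_2(B-C)p_2\|$ since $p\leq p_1,p_2$.

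Next I would use Kaplansky density to select a net (or sequence, in the separable case) $x_m\in M_0$ with $\|x_m\|\leq \|x\|$ and $\|x_m-x\|_2\to 0$, and apply Theorem~\ref{thrm:7.1} to the derivation $\delta$ (which has norm at most $2\|T\|$): the map $(M)_{2\|x\|}\ni y\mapsto \delta(y)$ is $\| \cdot \|_2$-$\text{\rm q}_M$ continuous, so $\text{\rm q}_M(\delta(x_m),\delta(x))\to 0$.

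The heart of the argument is the uniform estimate for the first term. I would prove the following: for every $\varepsilon>0$ and every $R,C>0$, there exists $\eta>0$ such that for all $y\in M$ with $\|y\|\leq R$, $\|y\|_2\leq \eta$, and every $K\in \B(L^2M)$ with $\|K\|\leq C$, one has $\text{\rm q}_M([K,y],0)\leq \varepsilon$. The proof mimics Proposition~\ref{prop:6.2}.$1^\circ$: let $e$ be the spectral projection of $yy^*$ corresponding to $[0,\eta']$ and $e'$ the spectral projection of $y^*y$ corresponding to $[0,\eta']$, for a suitable $\eta'$. Then $\|ey\|\leq \sqrt{\eta'}$, $\|ye'\|\leq \sqrt{\eta'}$, and $\tau(1-e),\tau(1-e')\leq \|y\|_2^2/\eta'$. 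Setting $p=e\wedge e'$ gives a projection (independent of $K$!) with $\tau(1-p)\leq 2\|y\|_2^2/\eta'$ and
$$
\|p[K,y]p\|\leq \|pK\|\,\|yp\|+\|py\|\,\|Kp\|\leq 2C\sqrt{\eta'}.
$$
Choosing $\eta'$ and then $\eta$ appropriately in terms of $\varepsilon,C$ yields the claim. Applied to $y=x-x_m$ (so $R=2\|x\|$, $C=\|T\|$), this gives $\text{\rm q}_M([K_n,x-x_m],0)<\varepsilon/3$ for $m$ large, uniformly in $n$.

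Combining the three estimates by the triangle inequality finishes the proof. The main obstacle is step three, the uniform-in-$n$ bound on $\text{\rm q}_M([K_n,x-x_m],0)$, which is precisely what forces the hypothesis $\|K_n\|\leq \|T\|$ rather than allowing the operator norms to blow up; without this uniform bound, a diagonal-style argument would fail and the conclusion need not extend off $M_0$.
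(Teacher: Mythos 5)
Your proof is correct and follows essentially the same route as the paper: Kaplansky approximation of $x$ by $x_m\in M_0$, the triangle inequality for $\text{\rm q}_M$, and a spectral-projection estimate uniform in $n$ thanks to $\|K_n\|\leq\|T\|$. The uniform bound you re-derive for $\text{\rm q}_M([K_n,x-x_m],0)$ is precisely the ``more precisely'' part of Theorem~\ref{thrm:7.1} applied to the derivations $\text{\rm ad}(K_n)$ (whose norms are uniformly $\leq 2\|T\|$), which is exactly how the paper handles that term.
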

 
\begin{proof} Let $y\in (M)_1$. We have to prove that given any $\varepsilon > 0$ there exists $n_0$ such that for any $n\geq n_0$ there exists $p\in \mathcal P(M)$ satisfying 
$\tau(1-p)\leq \varepsilon$ and $\|p(\delta(y)-[K_n, y])p\|\leq \varepsilon$. 

By Kaplanski's theorem, we can take $y_0\in (M_0)_1$ with $\|y_0-y\|_2 \leq (\varepsilon/2)^{3/2}/2$. 
By applying the hypothesis to this $y_0\in M_0$, there exists $n_0$ such that $\forall n\geq n_0$, $\exists p_0\in \mathcal P(M)$ with $\tau(1-p_0)\leq \varepsilon/2$ and 
$\|p_0(\delta(y_0)-[K_n, y_0])p_0\|\leq \varepsilon/3$. On the other hand, by applying Theorem~\ref{thrm:7.1}  to $x=y-y_0$ and the derivations $\delta, \text{\rm ad}(K_n)$, 
we get a projection $p_1\in M$ such that $\tau(1-p_1) \leq \varepsilon/2$ 
and $\|p_1\delta(y-y_0)p_1\| \varepsilon/3$, $\|p_1[K_n, (y-y_0)]p_1\|\leq \varepsilon/3$. Thus, if we let $p=p_0 \wedge p_1$, then $\tau(1-p)\leq \varepsilon$ and for each $n\geq n_0$ we have 
$$
\|p(\delta(y) - [K_n, y])p\| 
$$
$$
\leq \|p\delta(y-y_0)p\|+\|p[K_n, (y-y_0)]p\| + \|p(\delta(y_0)-[K_n, y_0])p\| \leq \varepsilon.
$$

\end{proof}

\begin{thrm}\label{thrm:7.7}
 Let  $\delta: M \to \text{\rm q}\K_M$ be a derivation implemented by $T \in \B(L^2 M)$. Then there exists a net of finite-rank operators $K_\iota$ with $\| K_\iota \| \leq \|T \|$ such that $\lim_\iota \text{ \rm q}_M(\delta(x) , [K_\iota , x] )  = 0$ for all $x \in M$. Moreover, if $L^2M$ is separable, then the net can be taken a sequence. 
\end{thrm}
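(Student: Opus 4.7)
The plan is to approximate $T$ by compressions of the form $\tilde K_\iota = e_\iota T e_\iota$, where $(e_\iota)$ is a quasicentral approximate identity of $\K(L^2M)$ in $\B(L^2M)$ consisting of positive contractions, and then replace each $\tilde K_\iota$ by a norm-close finite-rank operator. By Lemma~\ref{lemma:7.6} it suffices to establish the convergence $\text{q}_M([\tilde K_\iota, x], \delta(x)) \to 0$ on any weakly dense $^*$-subalgebra $M_0 \subset M$. When $L^2M$ is separable, $M$ is norm-separable as a $C^*$-algebra, so we may take $M_0$ countable and apply the separable form of Arveson's theorem (\cite{Arv77}) to the separable $C^*$-subalgebra $C^*(M_0, T) \subset \B(L^2M)$, producing a sequence $(e_n)$; in general we use the net version (Pedersen) to obtain quasicentrality at every element of $M \cup \{T\}$.

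The key algebraic identity is obtained by expanding $\delta(x) - [\tilde K_\iota, x]$ and using $Tx = xT + \delta(x)$:
\begin{equation*}
\delta(x) - [\tilde K_\iota, x] \;=\; \bigl(\delta(x) - e_\iota \delta(x) e_\iota\bigr) \;+\; [x, e_\iota]\, T e_\iota \;+\; e_\iota T\, [x, e_\iota].
\end{equation*}
For $x \in M_0$, the last two terms tend to zero in operator norm by the quasicentrality of $(e_\iota)$ at $x$. Therefore the problem reduces to showing that $e_\iota \delta(x) e_\iota \to \delta(x)$ in the $\text{q}_M$-metric, for each $x \in M_0$.

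The hard step is precisely this last convergence, and it is where the definition of $\text{q}\K_M$ is used. Since $\delta(x) \in \text{q}\K_M$, given $\varepsilon > 0$ we may choose a projection $p \in \mathcal P(M)$ and a compact operator $L \in \K(L^2M)$ with $\tau(1-p) < \varepsilon$ and $\|p(\delta(x) - L) p\| < \varepsilon$. Decompose
\begin{equation*}
\delta(x) - e_\iota \delta(x) e_\iota \;=\; \bigl[(\delta(x) - L) - e_\iota(\delta(x) - L) e_\iota\bigr] \;+\; (L - e_\iota L e_\iota).
\end{equation*}
The second summand vanishes in operator norm because $L$ is compact and $(e_\iota)$ is an approximate identity for $\K(L^2M)$. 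For the first summand, compress by $p$ and commute $e_\iota$ past $p$ using $\|[e_\iota, p]\| \to 0$ (quasicentrality at $p$, which we include in the generating set for $(e_\iota)$), so that $\|p e_\iota (\delta(x)-L) e_\iota p\| \leq \|p(\delta(x)-L)p\| + o_\iota(1) < \varepsilon + o_\iota(1)$. Combining the two bounds yields $\|p\bigl(\delta(x) - e_\iota \delta(x) e_\iota\bigr)p\| < 2\varepsilon + o_\iota(1)$, whence $\text{q}_M(\delta(x) - e_\iota \delta(x) e_\iota, 0) < 3\varepsilon$ for $\iota$ large, proving the desired convergence.

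Finally, replace each $\tilde K_\iota$ (which is compact, with $\|\tilde K_\iota\| \leq \|T\|$) by a finite-rank operator $K_\iota$ satisfying $\|K_\iota\| \leq \|T\|$ and $\|K_\iota - \tilde K_\iota\|$ tending to zero along the index, using that finite-rank operators are norm-dense in $\K(L^2M)$ and a mild rescaling to preserve the norm bound; this alteration contributes uniformly vanishing error to $[K_\iota, x]$ on $(M)_1$. The resulting net (sequence, in the separable case) satisfies all requirements on $M_0$, and Lemma~\ref{lemma:7.6} promotes this to convergence on all of $M$. The main obstacle is the $\text{q}_M$-approximation $e_\iota \delta(x) e_\iota \to \delta(x)$ above: it is the point where the definition of $\text{q}\K_M$ via simultaneous projection-compression and compact approximation must be meshed with the quasicentrality of $(e_\iota)$ at the specific projection $p$ that witnesses the compact approximation.
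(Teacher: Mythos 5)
Your proof is correct, but it takes a genuinely different route from the paper's. The paper argues by convexity and duality: given a finite set $F=\{x_1,\dots,x_n\}\subset M$ and $\epsilon>0$, it first chooses a projection $p\in\mathcal P(M)$ with $\tau(1-p)<\epsilon/2$ and $p\delta(x_i)p\in\K(L^2M)$, then considers the convex set $C\subset\K(L^2M)^n$ of tuples $\bigl(p\delta(x_i)p-p[K,x_i]p\bigr)_i$ with $K$ finite-rank, $\|K\|\le\|T\|$; since $T$ is a weak-operator limit of finite-rank operators of norm at most $\|T\|$, the zero tuple lies in the $\sigma\bigl(\B(L^2M)^n,\B^*_{\text{\rm n}}(L^2M)^n\bigr)$-closure of $C$, and convexity upgrades this to the norm closure, yielding a single finite-rank $K$ that works for $F$ up to $\epsilon$; the net is indexed by finite subsets, and the separable case is handled via Lemma~\ref{lemma:7.6}, exactly as you do. You instead build the approximants explicitly as compressions $e_\iota Te_\iota$ along a quasicentral approximate unit of $\K(L^2M)$ (a tool the paper uses only in Theorem~\ref{thrm:7.3}), verify the commutator identity, and check the $\text{\rm q}_M$-convergence $e_\iota\delta(x)e_\iota\to\delta(x)$ directly from the definition of $\text{\rm q}\K_M$, commuting $e_\iota$ past the witnessing projection $p$ by quasicentrality. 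I checked the identity $\delta(x)-[e_\iota Te_\iota,x]=\bigl(\delta(x)-e_\iota\delta(x)e_\iota\bigr)+[x,e_\iota]Te_\iota+e_\iota T[x,e_\iota]$ and the subsequent estimates; they are sound, and the finite-rank perturbation with rescaling at the end is harmless. Your approach is more constructive and produces canonical compact approximants; the paper's is shorter, replacing quasicentrality by the elementary fact that norm and weak closures of convex sets coincide.

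One small correction: separability of $L^2M$ does not make $M$ norm-separable as a C$^*$-algebra (a diffuse von Neumann algebra is never norm-separable). What you actually need, and implicitly use, is a countable weakly dense $^*$-subalgebra $M_0\subset M$ (available since $M_*$ is separable), together with the countably many projections $p$ witnessing the $\text{\rm q}_M$-approximation of $\delta(x)$ for $x\in M_0$ and $\varepsilon=1/k$; these can all be placed inside the separable C$^*$-algebra to which Arveson's theorem is applied, since they are chosen independently of the approximate unit. With that rewording your argument goes through unchanged, and Lemma~\ref{lemma:7.6} promotes the convergence from $M_0$ to all of $M$ as you state.
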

\begin{proof} 
Let $F = \{x_1, x_2, \dots, x_n\}$ be an arbitrary finite subset of $M$ and $\epsilon>0$. Since $\delta $ is a derivation into $\text{\rm q}\K_M$, we can find a projection $p \in M$ with $\tau(1-p) < \epsilon/2$ such that $p\delta (x_i)p \in \K(L^2M)$   for $1\leq i \leq n$. Consider then the convex subset $C \subset \K(L^2 M)^n$ consisting of all $n$-tuples of the form

$$
(p\delta(x_1)p-p[K, x_1]p , p\delta(x_2)p-p[K, x_2]p,\dots, p\delta(x_n)p-p[K, x_n]p), 
$$
where  $K$ runs over all finite-rank operators in $\B(L^2 M)$ such that $\| K \| \leq \| T\|$.

The set $C \subset \K(L^2 M )^n $ can be viewed as a subset of $(\K(L^2 M)^n)^{**} = \B(L^2 M)^n$. Note that, since $\delta=\text{\rm ad}(T)$, 
if we plug in $T$ for $K$ in the above $n$-tuple viewed as an element in $\B(L^2M)^n$,  then one gets $(0, ..., 0)$.     
Note also that $T$ is a $wo$-limit of finite-rank operators with norm at most $\| T\|$ and that this implies 
$(0, ..., 0)=(p\delta(x_1)p-p[T, x_1]p , p\delta(x_2)p-p[T, x_2]p,\dots, p\delta(x_n)p-p[T, x_n]p)$ is in the 
$\sigma(\B(L^2 M)^n, \B^*_\text{\rm n}(L^2 M)^n)$-closure of $C$ in $\B(L^2 M)^n$. But since $C \subset \K(L^2M )^n$ is convex, its norm closure in $\K(L^2 M)^n$ coincides with its closure in the $\sigma(\K(L^2 M)^n, \B^*_\text{\rm n}(L^2 M)^n)$ topology. Hence $(0, 0, \dots, 0)$ is in the norm closure of $C$.  In particular, there exists a finite-rank operator $K \in \B(L^2 M)$ such that $\| K \| \leq \|T \|$ and $\| p \delta(x_i) p-p[K, x_i]p\|< \epsilon/2$ for all $	1\leq i \leq n$. But then we see that 

$$
\sup_{1\leq i \leq n} 
\text{ \rm q}_M( \delta(x_i), [K, x_i] ) 
\leq 
\sup_{1\leq i \leq n}
\tau(1-p) + \|p(\delta(x_i) - [K,x_i]) p \|
< \epsilon.
$$

This shows that for any set $F \subset M$  we can find a finite-rank operator $K_{F} \in \B(L^2M)$  such that $\| K_{F} \| \leq \|T\|$ and $\text{ \rm q}_M( \delta(x_i) - [K_F, x_i] ) <1/|F|$. This net 
$(K_F)_F$, indexed over all finite subsets will then satisfy the condition.

The fact that this net can be taken to be a sequence when $L^2M$ is separable follows from Lemma~\ref{lemma:7.6}. 

\end{proof}

The next result shows if $B_0 \subset M$ 
is a weakly quasi-regular diffuse von Neumann subalgebra of $M$ (in the sense of \cite{GP14}), 
then the derivations of $M$ into any of the bimodule $\text{\rm q}\K_M$, are uniqueley 
determined by their restriction to $B_0$. 

\begin{prop} 
Let $M$ be a tracial von Neumann algebra with a diffuse weakly quasi-regular von Neumann 
subalgebra $B_0\subset M$.  
If a derivation $\delta: M \rightarrow \text{\rm q}\K_M$ 
vanishes on $B_0$, then $\delta = 0$ on all $M$. 
\end{prop}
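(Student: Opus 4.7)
\emph{Plan.} The plan is to combine the structural constraint on $\text{q}\K_M$ from Lemma~\ref{lemma:6.6} with the automatic $\|\cdot\|_2$--$\text{q}_M$ continuity of $\delta$ from Theorem~\ref{thrm:7.1} to show that $\delta$ vanishes first on the unitary normalizer of $B_0$, then on the full quasi-normalizer, and finally on all of $M$ by the weak density built into the hypothesis.

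\emph{Normalizer step.} For a unitary $u\in\mathcal N_M(B_0)$, put $T=\delta(u)\in\text{q}\K_M$ and $\sigma=\mathrm{Ad}(u^*)\in\mathrm{Aut}(B_0)$, so $bu=u\sigma(b)$. Computing $\delta(bu)$ in two ways, using $\delta(b)=\delta(\sigma(b))=0$, yields $bT=T\sigma(b)$ for all $b\in B_0$; equivalently, with $b'=\sigma(b)$, $Tb'=ub'u^*\,T$. This is exactly the hypothesis of Lemma~\ref{lemma:6.6} with the diffuse subalgebra $B=B_0\subset 1\cdot M\cdot 1$ and the unital $*$-homomorphism $\sigma':b'\mapsto ub'u^*\in 1\cdot M\cdot 1$, so Lemma~\ref{lemma:6.6} forces $T=(1-1)T(1-1)=0$. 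Hence $\delta$ vanishes on $\mathcal N_M(B_0)$.

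\emph{Quasi-normalizer step.} For $x\in qN_M(B_0)$ choose $x_1,\dots,x_n\in M$ with $B_0 x\subset\sum_i x_iB_0$, and write $bx=\sum_i x_i\phi_i(b)$ with $\phi_i(b)\in B_0$. Applying $\delta$ and using $\delta|_{B_0}=0$ gives the key relation
\[ bT_x=\sum_{i=1}^{n}T_{x_i}\phi_i(b),\qquad b\in B_0, \]
where $T_y:=\delta(y)\in\text{q}\K_M$. I would then amplify: work inside $\widetilde M=M\,\overline{\otimes}\,M_{n+1}(\complex)$ with the diagonal embedding $b\mapsto b\otimes 1$ of $B_0$, and package $(T_x,T_{x_1},\dots,T_{x_n})$ into a single $\widetilde T\in\text{q}\K_{\widetilde M}$ whose non-zero entries sit in one row so that the scalar relation above rewrites as a genuine intertwining $\widetilde T\,\widetilde b=\widetilde\sigma(\widetilde b)\,\widetilde T$ for some unital $*$-homomorphism $\widetilde\sigma$ from the diagonal $B_0$ into a corner $f\widetilde M f$ of $\widetilde M$. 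Applying Lemma~\ref{lemma:6.6} inside $\widetilde M$ (whose $\text{q}\K_{\widetilde M}$ is just the matrix amplification of $\text{q}\K_M$) then forces the entry encoding $T_x$ to lie in a corner which, combined with the symmetric right-sided relation $T_xb=\sum_j c_j(b)T_{y_j}$ coming from $xB_0\subset\sum_j B_0y_j$, collapses to zero. Thus $\delta(x)=0$ on $qN_M(B_0)$.

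\emph{Density closure and main obstacle.} By Theorem~\ref{thrm:7.1}, for each $r>0$ the kernel $\ker\delta\cap(M)_r$ is $\|\cdot\|_2$--$\text{q}_M$ closed, hence $\|\cdot\|_2$-closed; since on bounded sets the $\|\cdot\|_2$-topology coincides with the strong operator topology, and weak quasi-regularity gives that the (weak) quasi-normalizer is $so$-dense in $M$, the previous step yields $\ker\delta=M$, i.e., $\delta\equiv 0$. The hard part I expect is the construction in the quasi-normalizer step: the maps $\phi_i$ are only bounded linear, not $*$-homomorphisms, so producing the honest $*$-homomorphism $\widetilde\sigma$ required by Lemma~\ref{lemma:6.6} must be done carefully--the natural candidate arises from the $B_0$-bimodule structure of $\overline{B_0 x B_0}^{\|\cdot\|_2}$ (finitely generated on each side by quasi-normality), possibly processed through the Jones basic construction $\langle M,e_{B_0}\rangle$ to linearize the relation into the symmetric intertwining form needed.
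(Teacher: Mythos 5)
Your normalizer step is correct, and it is in fact the heart of the paper's argument: applying $\delta$ to $ub=u\sigma(b)$, using that $\delta$ kills both $b$ and $ubu^*$, and feeding the resulting intertwining relation into Lemma~\ref{lemma:6.6} with $e=f=1$. But the rest of the proposal has a genuine gap, in two respects. First, the quasi-normalizer step is not a proof: Lemma~\ref{lemma:6.6} needs a unital faithful $^*$-homomorphism $\sigma$ defined on a diffuse subalgebra, while the maps $\phi_i$ coming from $B_0x\subset\sum_i x_iB_0$ are merely linear, and the matrix amplification you describe does not by itself manufacture such a $\sigma$ -- you acknowledge this yourself, and no construction is supplied. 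Second, and more importantly, even a complete quasi-normalizer argument would not prove the statement, because ``weakly quasi-regular'' in the sense of \cite{GP14} is not the assertion that $qN_M(B_0)$ is dense in $M$; it is an iterated generation condition built from unitaries $u$ for which $uBu^*\cap B$ is diffuse (at each stage of a tower starting from $B_0$), and such unitaries need not quasi-normalize, or normalize, anything. So your closing step ``weak quasi-regularity gives that the (weak) quasi-normalizer is $so$-dense'' conflates two different notions, and the argument does not reach the hypothesis actually being assumed.

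The paper's route shows that no amplification or basic-construction machinery is needed. By Theorem~\ref{thrm:7.1} the derivation is $\|\cdot\|_2$--$\text{\rm q}_M$ continuous, so its kernel $\tilde B$ is a von Neumann subalgebra of $M$ containing the diffuse algebra $B_0$. Now take any unitary $u\in M$ with $B:=u^*\tilde Bu\cap\tilde B$ diffuse; since both $b$ and $\sigma(b)=ubu^*$ lie in $\tilde B$, your normalizer computation gives $\delta(u)b=\sigma(b)\delta(u)$ for all $b\in B$, and Lemma~\ref{lemma:6.6} (with $e=f=1$) yields $\delta(u)=0$, i.e.\ $u\in\tilde B$. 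Thus $\tilde B$ is stable under adjoining exactly the unitaries that drive the weak quasi-normalization process, and weak quasi-regularity of $B_0$ forces $\tilde B=M$. In short: enlarge $B_0$ to the kernel first, and then your one intertwining computation, applied to $u^*\tilde Bu\cap\tilde B$ rather than to $B_0$, already finishes the proof; the quasi-normalizer analysis you flag as the hard part is not the right target and can be discarded.
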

\begin{proof} Since $\delta$ is automatically $\| \cdot \|_2$-$\text{\rm q}_M$-continuous, it follows that the space $\tilde{B}$ of elements 
in $M$ on which $\delta$ vanishes (which contains the diffuse algebra $B_0$, by hypothesis)  
is a von Neumann subalgebra of $M$. Let $u$ be a unitary element in $M$ such that $B:=u^*\tilde{B}u\cap \tilde{B}$ is diffuse 
and denote $\sigma: B \rightarrow  M$ the isomorphism of $B$ into $\tilde{B}$ given by $\sigma(b)=ubu^*$, $b\in B$. 
Since $ub=\sigma(b)u$, by applying $\delta$ it follows that $\delta(u)b=\sigma(b)\delta(u)$, $\forall b\in B$. Thus, $K=\delta(u)\in \text{\rm q}\K_M$ 
satisfies the conditions in Lemma~\ref{lemma:6.6}, implying that $\delta(u)=0$. Since $B_0\subset M$ is weakly quasi-regular, 
this shows that $\tilde{B}=M$. 

\end{proof}

Let us end this section by mentioning some $\text{\rm q}_M$-approximation properties of derivations of a tracial von Neumann algebra 
$M$ into Banach $M$-bimodules endowed with the $\text{\rm q}_M$-metric, notably $\text{\rm q}\K_M$.

\begin{prop}
Let $(M,\tau)$ be a tracial von Neumann algebra,  $\B$ a Banach $M$-bimodule and 
$\delta: M \rightarrow \B$ a derivation.  

\vskip.05in 

$1^\circ$  
Let $M_0\subset M$ be a  weakly dense $C^*$-subalgebra and $\B_0\subset \B$ 
an $M$ sub-bimodule $($not necessarily norm-closed$)$. Assume 
$p\delta(M_0)p\subset \B_0$, for some projection $p\in M$. 
Then, for any countable subset $\X\subset M$ and any $\varepsilon_0 > 0$, there exists $p_0\in \mathcal{P}(pMp)$ 
such that $\tau(p-p_0)\leq \varepsilon\tau(p)$ and $p_0\delta(x)p_0 \in \B_0$, $\forall x\in \X$. 

\vskip.05in 

$2^\circ$ 
If $\B=\text{\rm q}\K_M$, then given any  
separable $C^*$-subalgebra $M_0\subset M$ and any $\varepsilon > 0$, there exists $p_0\in \mathcal{P}(M)$ 
such that $\tau(1-p_0)\leq \varepsilon$ and $p_0\delta(x)p_0 \in \K(L^2M)$, $\forall x\in M_0$. 
\end{prop}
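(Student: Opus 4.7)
The plan is to combine the automatic $\|\cdot\|_2$-$\text{\rm q}_M$ continuity of $\delta$ from Theorem~\ref{thrm:7.1} with the diagonal projection-shrinking construction at the heart of the proof of Proposition~\ref{prop:6.2}($4^\circ$). For part $1^\circ$, the first step is to show that $p\delta(x)p$ lies in the $\text{\rm q}_M$-closure of $\B_0$ inside $p\B p$ for every $x \in M$. Kaplansky's density theorem produces a sequence $y_n \in M_0$ with $\|y_n\|\leq\|x\|$ and $\|x-y_n\|_2 \to 0$ arbitrarily rapidly. Applying Theorem~\ref{thrm:7.1} to $(x-y_n)/(2\|x\|)$ yields projections $e_n \in M$ with $\tau(1-e_n)\to 0$ and $\|e_n\delta(x-y_n)e_n\|\to 0$. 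Setting $r_n = p\wedge e_n \leq p$, I decompose
\[
r_n \delta(x) r_n \;=\; r_n\bigl(p\delta(y_n)p\bigr)r_n \;+\; r_n\delta(x-y_n)r_n,
\]
the first term lying in $\B_0$ (since $p\delta(y_n)p\in \B_0$ by hypothesis and $\B_0$ is an $M$-sub-bimodule), while the second vanishes in operator norm, exhibiting $p\delta(x)p$ as a $\text{\rm q}_M$-limit of elements of $\B_0$.

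Enumerating $\X=\{x_i\}_{i\geq 1}$, I then run the diagonal argument from the proof of Proposition~\ref{prop:6.2}($4^\circ$) inside $p\B p$ regarded as a Banach $pMp$-bimodule: for each pair $i\leq k$ pick $S_{i,k}\in \B_0$ and a projection $q_{i,k}\leq p$ with $\tau(p-q_{i,k})\leq 2^{-k}\varepsilon_0\tau(p)/i$ and $\|q_{i,k}(p\delta(x_i)p - S_{i,k})q_{i,k}\|\leq 2^{-k}$, then diagonalize to produce a single projection $p_0\in \mathcal{P}(pMp)$ with $\tau(p-p_0)\leq \varepsilon_0\tau(p)$ such that, for each fixed $i$, $p_0 S_{i,k} p_0 \to p_0\delta(x_i)p_0$ in operator norm as $k\to\infty$.

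For part $2^\circ$, I would specialize to $\B = \B(L^2M)$ (a dual normal $M$-bimodule) and $\B_0 = \K(L^2M)$ (a norm-closed $M$-sub-bimodule, with $\overline{\B_0}^{\text{\rm q}_M} = \text{\rm q}\K_M$). Since $\delta$ is automatically norm continuous by \cite{R72} and $M_0$ is separable, $\E := \overline{\delta(M_0)}^{\|\cdot\|}$ is a norm-separable subspace of $\text{\rm q}\K_M$, so Proposition~\ref{prop:6.2}($4^\circ$) produces an increasing sequence of projections $p_n\nearrow 1$ in $M$ with $p_n T p_n \in \K(L^2M)$ for every $T\in \E$; choosing $p_0=p_N$ with $\tau(1-p_N)<\varepsilon$ then gives $p_0\delta(x)p_0\in \K(L^2M)$ for every $x\in M_0$.

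The main obstacle lies in $1^\circ$: because $\B_0$ is not assumed norm-closed, the norm-Cauchy sequence $p_0 S_{i,k} p_0 \in \B_0$ need not converge inside $\B_0$, so exact membership $p_0\delta(x_i)p_0 \in \B_0$ (rather than in $\overline{\B_0}^{\|\cdot\|}$) requires either replacing $\B_0$ by its norm-closure or appealing to additional completeness of $\B_0$---a point that holds automatically in the applications, notably in $2^\circ$ where $\B_0=\K(L^2M)$ is already norm-closed.
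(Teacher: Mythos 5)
Your part $2^\circ$ is fine and is exactly the paper's route: the paper simply cites Proposition~\ref{prop:6.2}$(4^\circ)$, applied with the dual normal bimodule $\B(L^2M)$, the norm-closed sub-bimodule $\B_0=\K(L^2M)$, and $\E$ the (norm-separable, by Ringrose's automatic norm continuity) closed span of $\delta(M_0)\subset \text{\rm q}\K_M$. The problem is $1^\circ$, and it is the gap you yourself flag at the end: your argument only exhibits $p_0\delta(x_i)p_0$ as an operator-norm limit of the elements $p_0S_{i,k}p_0\in\B_0$, i.e.\ as an element of the norm closure of $\B_0$, whereas the proposition asserts exact membership in $\B_0$ and explicitly allows $\B_0$ to be non-norm-closed. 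This cannot be repaired within your scheme: Kaplansky $\|\cdot\|_2$-approximation combined with the $\|\cdot\|_2$--$\text{\rm q}_M$ continuity of Theorem~\ref{thrm:7.1} can never yield more than a closure statement, and the diagonalization of Proposition~\ref{prop:6.2}$(4^\circ)$ uses in its last step precisely that the norm-Cauchy sequence $\{P_mS_{i,n}P_m\}_n\subset(\B_0)_1$ converges \emph{inside} $\B_0$, which requires $\B_0$ norm-closed (and $\B$ dual normal for completeness of its unit ball) --- hypotheses absent in $1^\circ$. ``Replacing $\B_0$ by its norm-closure'' proves a different statement, not this one.

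The missing idea in the paper's proof is Pedersen's non-commutative Lusin-type theorem, which produces \emph{exact} local agreement rather than approximation: for each $x_n\in\X$ there are $y_n\in M_0$ and $p_n\in\mathcal P(M)$ with $\tau(p_n)\geq 1-\tau(p)\varepsilon/2^{n+1}$ such that $x_np_n=y_np_n$ and $p_nx_n=p_ny_n$. The Leibniz rule then gives, with no limiting process,
\[
p_n\delta(x_n)p_n=p_n\delta(x_np_n)p_n-p_nx_n\delta(p_n)p_n
= p_ny_n\delta(p_n)p_n+p_n\delta(y_n)p_n-p_nx_n\delta(p_n)p_n
= p_n\delta(y_n)p_n ,
\]
so that for $p_0=\bigwedge_{n\geq1}p_n\wedge p$ one has $p_0\delta(x_n)p_0=p_0\bigl(p\,\delta(y_n)\,p\bigr)p_0\in\B_0$ exactly, using only that $\B_0$ is an $M$-sub-bimodule, while the trace estimate $\tau(p-p_0)\leq\varepsilon\tau(p)$ follows from the choice of the $\tau(p_n)$. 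This is also where the hypothesis that $M_0$ is a weakly dense C$^*$-subalgebra (not merely a dense $*$-algebra) is used. So your write-up of $1^\circ$, as it stands, proves a weaker conclusion; to prove the stated one you need the exact-corner approximation, not a $\|\cdot\|_2$ or $\text{\rm q}_M$ approximation.
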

\begin{proof}
$1^\circ$  Let $\X=\{x_n\}_{n\geq 1}$ be an enumeration of $\X$. 
By Pedersen's Lusin-type Theorem, for each $n$ 
there exists $p_n\in \mathcal{P}(M)$ and $y_n \in M_0$ such that $x_np_n=y_np_n$, $p_nx_n=p_ny_n$ 
and $\tau(p_n)\geq 1 - \tau(p) \varepsilon/2^{n+1}$, $\forall n$. 
Since $\delta(x_np_n)=y_n\delta(p_n)+\delta(y_n)p_n$, we have 
$$
p_n\delta(x_n)p_n=p_n\delta(x_np_n)p_n- p_nx_n\delta(p_n)p_n
$$
$$
= p_ny_n\delta(p_n)p_n+p_n\delta(y_n)p_n-p_nx_n\delta(p_n)p_n=p_n\delta(y_n)p_n \in \B_0. 
$$

Thus, if we let $p_0=\wedge_{n\geq 1} p_n \wedge p$, then $p_0\delta(x)p_0 \in \B_0$, $\forall x\in \X$. 
Moreover, we have $\tau(\wedge_{n\geq 1} p_n)\geq (1-\Sigma_{n\geq 1} \tau(1-p_n)) = 1-\varepsilon \tau(p)$
and thus $\tau(p_0) \geq (1-\varepsilon\tau(p))+\tau(p)-1=(1-\varepsilon)\tau(p)$, 
implying that $\tau(p-p_0)\leq \varepsilon \tau(p)$.  

\vskip.05in

$2^\circ$ This is trivial by Proposition~\ref{prop:6.2}. 

\end{proof}


\begin{thebibliography}{9}

\bibitem[A14]{A14} V. Alekseev, {\em On the first continuous $L^2$-cohomology for free group factors}, J. Funct. Anal. {\bf 267} (2014), no. 11, 4264-4279.


\bibitem[AK15]{AK15} V. Alekseev,  D. Kyed, {\em Measure continuous derivations on von Neumann algebras and applications to $L^2$-cohomology}, {\em J. Operator Theory}, {\bf 73} (2015), no. 1, 91-111.

\bibitem[Arv77]{Arv77}    W. Arveson. {\em Notes on extensions of C$^*$-algebras}. Duke Math. J., {\bf 44} (1977), no. 2, 329-355.

\bibitem[BDF73]{BDF73} L.G. Brown, R.G. Douglas, P.A. Fillmore: ``Unitary equivalence modulo the compact operators and extensions 
of $C^*$-algebras'', Lecture Notes in Math. No. {\bf 345}, Springer-Verlag, 1973. 


\bibitem[Ch80]{Ch80} E. Christensen: {\em Extensions of derivations} II. Mathematica Scandinavica {\bf 50} (1982), 111-122. 


\bibitem[ChPSS03]{ChPSS03} E. Christensen, F. Pop, A. Sinclair, R. Smith: 
{\em Property Gamma factors and the Hochschild cohomology problem},  Proc. Natl. Acad. Sci. USA {\bf 100} (2003), 3865-3869. 

\bibitem[C75]{C75}
A.~Connes, \emph{On the classification of von {N}eumann algebras and their
  automorphisms}, Symposia {M}athematica, {V}ol. {XX} ({C}onvegno sulle
  {A}lgebre {$C^*$} e loro {A}pplicazioni in {F}isica {T}eorica, {C}onvegno
  sulla {T}eoria degli {O}peratori {I}ndice e {T}eoria {$K$}, {INDAM}, {R}ome,
  1975), Academic Press, London, 1976, pp.~435--478.



\bibitem[C76]{C76} A. Connes, {\em Classification of injective factors}, Ann. of Math., {\bf 104} (1976), 73-115. 

\bibitem[C01]{C01} A. Connes, {\rm Factors and geometry}, lecture at MSRI, May 1 2001.  

\bibitem[CS05]{CS05} A. Connes, D. Shlyakhtenko, {\em $L^2$-homology for von Neumann algebras}, J. Reine Angew. Math. {\bf 586} (2005) 125--168

\bibitem[DKEP22]{DKEP22} C. Ding, S. Kunnawalkam Elayavalli, J. Peterson, {\em Properly proximal von Neumann algebras}, preprint, arXiv:2204.00517, 2022, to appear in Duke Math. J. 

\bibitem[E88]{E88} E. Effros, {\em Amenability and virtual diagonals for von Neumann algebras}, J. Funct. Analysis {\bf 78} (1988), 137-153.

\bibitem[G01]{G01} D. Gaboriau: {\em Invariants $\ell^2$ de relations d’equivalences et de groupes}, Publ. Math IHES {\bf 95} (2002), 93-150.

\bibitem[GP14]{GP14} A. Galatan, S. Popa, {\em Smooth bimodules and cohomology of} II$_1$ {\em factors}, Journal of Inst. Math. Jussieu  {\bf 16} (2017), 155-187 


\bibitem[H45]{H45} G. Hochschild: {\em On the cohomology groups of an associative algebra}, Annals of Mathematics, {\bf 46} (1945),  58-67. 

\bibitem[Ho77]{Ho77} T. Hoover: {\em Derivations, homomorphisms, and operator ideals}. Proc Amer. Math Soc, {\bf 62} (1977), 293-298. 

\bibitem[J72]{J72} B. Johnson: ``Cohomology of Banach algebras'', Memoirs of the AMS, {\bf 127}, 1972. 

\bibitem[J74]{J74} B. Johnson: {\em A class of} II$_1$ {\em  factors without property P but with zero second cohomology}, Ark. Mat. {\bf 12} (1974), 153-159. 

\bibitem[JKR72]{JKR72} B. Johnson, R.V. Kadison, J. Ringrose: {\em Cohomology of operator algebras} III: {\em Reduction to normal cohomology}, Bull. Soc. Math. France {\bf 100} (1972), 73-96.

\bibitem[JP72]{JP72} B. Johnson, S. Parrott, {\em Operators commuting with a von Neumann algebra modulo the set of compact operators}, J. Funct. Analysis {\bf 11}  (1972),  39-61.

\bibitem[K66]{K66} R. Kadison: {\em Derivations of operator algebras}, Ann Math, {\bf 83} (1966), 280-293.

\bibitem[KR71]{KR71} R. V. Kadison,  J. R. Ringrose, {\em Cohomology of operator algebras I: Type I von Neumann algebras},  Acta Math.,  {\bf 126} (1971), 227-243. 

\bibitem[K77]{K77} S. Kaijser {\em A note on dual Banach spaces},  Mathematica Scandinavica,,  {\bf 41} (1977), 325-330. 

\bibitem[Ka53]{Ka53} I Kaplansky: {\it Modules over operator algebras}, Amer. J. Math, {\bf 75} (1953) 839-858. 

\bibitem[Ma00]{Ma00} B. Magajna: {\it $C^*$-convex sets and completely bounded bimodule homomorphisms}, Proc. of the Royal Soc. of Edinburgh, {\bf 130A} (2000), 275-387.

\bibitem[O10]{O10} N. Ozawa, {\em A comment on free group factors}, 
Banach Center Publ., {\bf 89} (2010), 241-245.  

\bibitem[Pe09]{Pe09} J. Peterson, {\em $L^2$
-rigidity in von Neumann algebras}, 
 Invent. Math., {\bf 175} (2009), 417–433.  

\bibitem[Pi01]{Pi01} G. Pisier: ``Similarity problems and completely bounded maps'', 
 Notes in Mathematics, {\bf 1618}, Springer-Verlag, Berlin, 2001. 

\bibitem[P85]{P85} S. Popa, {\em The commutant modulo the set of compact
operators of a von Neumann algebra}, J. Funct. Analysis, {\bf 71} (1987), 393-408.

\bibitem[P01]{P01} S. Popa: {\em On a class of type} II$_1$ {\em factors with
Betti numbers invariants}, Ann. of Math {\bf 163} (2006), 809-899
(math.OA/0209310; MSRI preprint 2001-024, May 2001).

\bibitem[PR89]{PR89} S. Popa, F. Radulescu, {\em Derivations of von Neumann
factors into the compact ideal space of a semifinite algebra}, Duke Math. J., {\bf 57} (1988), 485-518.

\bibitem[PV15]{PV15} S. Popa, S. Vaes, {\em Vanishing of the continuous first $L^2$-cohomology for} II$_1$  {\it factors}, Int. Math. Res. Not. 2015, no. 12, 3899-3907.

\bibitem[R72]{R72}  J. Ringrose: {\em Automatic continuity of derivations of operator algebras}, J. London Math. Soc. {\bf 5} (1972), 432-438. 

\bibitem[Sa66]{Sa66} S. Sakai: {\em Derivations of} W$^*$-{\em algebras}, Ann Math, {\bf 83} (1966), 273-279.

\bibitem[SW55]{SW55} I. M. Singer, J. Wermer: {\em Derivations on commutative normed algebras}, 
Mathematische Annalen {\bf 129} (1955),  260-264. 

\bibitem[Th08]{Th08} A. Thom: {\em $L^2$-cohomology for von Neumann algebras}. Geom. Funct. Anal. {\bf 18} (2008), 251-270. 
\end{thebibliography}
\end{document}